\newcommand{\eps}{\epsilon}
\DeclareMathOperator{\N}{{\mathbb N}}
\newcommand{\ren}{{\mathbb{R}^n}}
\newcommand{\ifl}{\Delta_\infty^s}
\newcommand{\iflp}{\Delta_\infty^{s,+}}
\newcommand{\iflm}{\Delta_\infty^{s,-}}
\newcommand{\ifleps}{\mathcal{L}_\veps}
\newcommand{\fl}{(-\partial_{rr}^2)^s}
\newcommand{\flx}{(-\partial_{xx}^2)^s}
\newcommand{\beq}{\begin{equation}}
\newcommand{\eeq}{\end{equation}}
\DeclareMathOperator{\dist}{dist}
\newcommand*\diff{\mathop{}\!\mathrm{d}}
\newcommand{\vertiii}[1]{{\left\vert\kern-0.25ex\left\vert\kern-0.25ex\left\vert #1
		\right\vert\kern-0.25ex\right\vert\kern-0.25ex\right\vert}}
\newtheorem{theorem}{Theorem}[section]
\newtheorem{proposition}[theorem]{Proposition}%[section]
\newtheorem{corollary}[theorem]{Corollary}%[section]
\newtheorem{lemma}[theorem]{Lemma}%[section]
\theoremstyle{definition}
\newtheorem{definition}[theorem]{Definition}%[section]
\newtheorem{remark}[theorem]{Remark}%[section]
\numberwithin{equation}{section}
\newcommand{\indik}{\mathbf{1}}
\newcommand{\R}{\mathbb{R}}
\newcommand{\veps}{\varepsilon}
\newcommand{\dd}[1]{\diff{#1}}
\renewcommand*{\@fnsymbol}[1]{\ensuremath{\ifcase#1\or \star \or \dagger\or \ddagger\or
		\mathsection\or \mathparagraph\or \|\or **\or \dagger\dagger
		\or \ddagger\ddagger \else\@ctrerr\fi}}
\definecolor{darkblue}{rgb}{0.05, .05, .65}
\definecolor{darkgreen}{rgb}{0.1, .65, .1}
\definecolor{darkred}{rgb}{0.8,0,0}
\title{Evolution Driven by the Infinity Fractional Laplacian}
\author{Félix del Teso\thanks{Departamento de Matemáticas, Universidad Autónoma de Madrid. \href{mailto:felix.delteso@uam.es}{felix.delteso@uam.es}}
	\and %
	J{\o}rgen Endal\thanks{Department of Mathematical Sciences, Norwegian University of Science and Technology. \href{mailto:jorgen.endal@ntnu.es}{jorgen.endal@ntnu.no}}
	\and %
	Espen R. Jakobsen\thanks{Department of Mathematical Sciences, Norwegian University of Science and Technology.
\href{mailto:espen.jakobsen@ntnu.no}{espen.jakobsen@ntnu.no}}
	\and %
	Juan Luis Vázquez\thanks{Departamento de Matemáticas, Universidad Autónoma de Madrid. \href{mailto:juanluis.vazquez@uam.es}{juanluis.vazquez@uam.es}}}
\begin{document}
	\maketitle
\begin{abstract}
We consider the evolution problem associated to the infinity fractional  Laplacian introduced by Bjorland, Caffarelli and Figalli (2012) as the infinitesimal generator of a non-Brownian tug-of-war game. We first construct a class of viscosity solutions of the initial-value problem  for bounded and uniformly continuous data.  An important result is the equivalence of the nonlinear operator in higher dimensions with the one-dimensional fractional Laplacian when it is applied to radially symmetric and monotone functions. Thanks to this and a comparison theorem between classical and viscosity solutions, we are able to establish a global Harnack inequality that, in particular, explains the long-time behavior of the solutions. 

\end{abstract}

\

\noindent {\bf Keywords: \rm  Fractional partial differential equations, infinity fractional Laplacian, infinity Laplacian, viscosity solutions.}

\medskip

\noindent {\bf 2020 Mathematics Subject Classification.}
35R11,   	%Fractional partial differential equations
35K55,  	%Nonlinear parabolic equations
35A01,  	%Existence problems for PDEs: global existence, local existence, non-existence
35B45.	%A priori estimates in context of PDEs

	\setcounter{tocdepth}{1}
	\tableofcontents
	
	\
%%%%%%%%%%%%%%%%%%%%%%%%%%%%%%%%%%%%%%%%%%%%%%%%%%%%%%%%%%%%%

\section{Introduction}\label{sec:intro}
In this paper we study a parabolic equation associated to the 
(normalized)  infinity fractional  Laplacian operator.  We recall that the local version of the game had been introduced by Peres et al. in 2009 (\cite{Pe-etal09}) where it is shown that the standard  infinity Laplace equation is solved by the value function for a  two-players  random turn “tug–of–war” game. The game is as follows: a token is initially placed at a position $x_0\in \Omega$ and every turn a fair coin is tossed to choose which of the players plays. This player moves the token to any point in the ball of radius $\veps>0$ around the current position. If, eventually, iterating this process, the token reaches a point $x_e\in \partial \Omega$, the players are awarded (or penalized) $f(x_e)$ (payoff function). For a PDE overview of the  infinity Laplacian operator and its role as an absolute minimizer for the $L^\infty$ norm of the gradient, see \cite{Lindqvist06, Lin16}.

 In 2012 Bjorland, Caffarelli and Figalli (\cite{BjCaFi12}) introduced equations involving the so-called \sl infinity fractional  Laplacian \rm as a model for a nonlocal version of the ``tug-of-war'' game. Following their explantation, instead of flipping a coin at every step,  every player chooses a direction and it is an $s$-stable L\'evy process that chooses both the active player and the distance to travel. The  infinity fractional Laplacian, with symbol $\ifl$, is a nonlinear integro-differential operator, the original definition is given in Lemma \ref{lem:EquivalentDefIFL} below. However, for the purpose of this paper, we also consider the alternative equivalent definition introduced in \cite{BjCaFi12} (see also \cite{DTEnLe22}) given by
\begin{equation}\label{eq:def1}
\ifl\phi(x):= C_s \sup_{|y|=1}\inf_{|\tilde{y}|=1} \int_{0}^\infty\left(\phi(x+\eta y)+\phi(x-\eta \tilde{y})-2\phi(x)\right) \frac{\dd \eta }{\eta ^{1+2s}}\qquad\text{where $s\in(1/2,1)$.}
\end{equation}
The constant is usually taken as  $C_s=(4^ss\Gamma(\frac{1}{2}+s))/(\pi^{\frac{1}{2}}\Gamma(1-s))$ but the value is irrelevant for our results. In their paper \cite{BjCaFi12} the authors study two stationary problems involving the infinity fractional  Laplacian posed in bounded space domains, namely, a Dirichlet problem and a double-obstacle problem.

Here, we consider the evolution problem
\begin{empheq}[left=\empheqlbrace]{align}
\partial_t u(x,t)&= \ifl u(x,t), \hspace{-3cm} &x \in \ren& , t > 0 ,\label{eq:EQ}\\
u (x,0) &= u_0(x),  \hspace{-3cm} & x \in \ren&, \label{eq:BC}
\end{empheq}
with  $s\in(1/2,1)$ and $n\ge 2$. When
$n=1$ the operator $- \ifl$ is just the usual linear fractional
Laplacian operator $(-\Delta)^s$ of order $s$, and equation
\eqref{eq:EQ} is just the well-known fractional heat equation \cite{BlumGet1960, Kol11}. See also a
detailed study of that equation using PDE techniques in \cite{DrGaVo03, BaPeSoVa14, BoSiVa17, Vazquez2018}.   Note that for $n\ge 2$ the operator is nonlinear so a
new theory is needed. A non-normalized version is introduced in
\cite{ChaJa17} along with a well-posedness theory for the
corresponding equations of the type \eqref{eq:EQ}--\eqref{eq:BC}. However, the two problems
are not equivalent nor closely related. 

Here we develop an existence theory of suitable viscosity solutions
for the parabolic problem \eqref{eq:EQ}--\eqref{eq:BC}, based on
approximation with monotone schemes. We show that the obtained class
of solutions enjoys a number of good properties.  As in the elliptic
case \cite{BjCaFi12}, we lack a uniqueness result in the context of
viscosity solutions. However, we are able to prove an important
comparison theorem relating two types of solutions, classical and
viscosity ones, see Theorem \ref{thm:comparisonsmooth}. As a counterpart, we also obtain uniqueness and comparison of classical solutions. Moreover, we show that for smooth, radially symmetric
functions and nonincreasing along the radius in $\ren$ with $n\geq2$, the operator $-\ifl$
reduces to the classical fractional Laplacian $(-\Delta)^s$ in
dimension $n=1$ (Theorem \ref{prop:radialOp}). A similar example regarding nondecreasing one-dimensional profiles can be found in Lemma \ref{lem:OtherExamplesSmoothSolutions}. In this way we may
construct a large class of classical solutions that
make the comparison theorem relevant (Theorems \ref{thm:ExistenceClassicalSolution} and \ref{thm:ExistenceClassicalSolution2}).  Note that no similar reduction
applies in general, even in the radial case (see
Subsection \ref{sec:radialOpCounter} for a counterexample).

 Using the developed tools, we study the asymptotic behavior of the constructed solutions, and obtain a global Harnack type principle, see Theorem \ref{thm:GlobalHarnackPrinciple}. 
 
 \subsection{Related literature}

It is interesting to compare the nonlocal model \eqref{eq:EQ} with the local version of the infinity Laplacian that has been studied by many authors, both in the stationary and evolution cases, cf.   \cite{AronssonCrJuut2004,JuutinenPK2006,Lindqvist06, AkJuKa09, Pe-etal09, MR2915863, PortVaz2013, Lin16}.  
Asymptotic expansions for the game theoretical  $p$-Laplacian in the local case and related approximation schemes in the elliptic case are studied in \cite{Manfredi10, Manfedi12, dTMaPa22} and in the parabolic case in \cite{MaPaRo10}. For the variational version of the $p$-Laplacian operator see \cite{dTLi21}.

There exist in the literature other nonlocal generalizations of the $p$-Laplacian and the infinity Laplacian. Let us mention (i) the normalized version \cite{BjCaFi12, BCF12b} with asymptotic expansions and game theoretical approach \cite{BuSq22, DTEnLe22, Lew22};  (ii) nonnormalized version \cite{ChaJa17} both elliptic and parabolic; (iii) H\"older infinity Laplacian \cite{ChLiMo12}; and (iv) the (variational) fractional $p$-Laplacian \cite{CoHa16, MaRoTo16, Vaz16, Pala18,Vaz20, CoHa21, Vaz21}.

\section{Preliminaries and statement of main results}		

%%%%%%%%%%%%%%%%%%%%%%%%%%%%%%%%%%%%%%%%%%%%%%%%%%%%%%%%%%%
First let us fix some notation that we will use along the paper.

For given $\delta>0$, standard mollifiers are denoted by $\rho_\delta$. Following \cite{BjCaFi12}, we say that $\phi\in C^{1,1}(x)$ at some $x\in \ren$ if there exists $p_x\in \ren$ and $C_x,\eta_x>0$ such that
\begin{equation}\label{eq:PhiTaylor}
|\phi(x+y)-\phi(x)-p_x\cdot y| \leq C_x |y|^2\qquad\text{for all $|y|<\eta_x$.}
\end{equation}
Note that $C_\textup{b}^2(B_R(x))\subset C^{1,1}(x)$.  Here
$C_{\textup{b}}^k(U)$ is the space of functions on the set $U$ with bounded continuous
derivatives of all orders in $[0,k]$. Let us also define:
\begin{align*}
B(\ren)&:=\{\phi: \ren\to\R \,|\, \textup{$\phi$ is pointwisely bounded}\},\\
UC(\ren)&:=\{\phi: \ren\to\R \,|\, \textup{$\phi$ is uniformly continuous}\},\\
BUC(\ren)&:=B(\ren)\cap UC(\ren)\qquad\textup{with}\qquad
\|\phi\|_{C_\textup{b}(\ren)}:=\sup_{x\in\ren}|\phi(x)|,\\
\intertext{ and for $\beta\in(0,1]$, we define
$|\phi|_{_{C^{0,\beta}}(\ren)}=\sup_{x,y\in\ren}|\phi(x)-\phi(y)|/|x-y|^\beta$
and} 
 C^{0,\beta}(\ren)&:=\{\phi\in
C_{\textup{b}}(\ren) \,|\, \textup{$\|\phi\|_{C^{0,\beta}}<\infty$}\} \qquad\text{where}\qquad  \|\phi\|_{C^{0,\beta}}=\|\phi\|_{C_{\textup{b}}}+|\phi|_{C^{0,\beta}}.
\end{align*}
 A modulus of continuity is a nondecreasing function $\omega:\R^+\to \R^+$ such that
$\lim_{r\to 0^+}\omega(r)=0$. For a function $f\in BUC(\ren)$, we define the corresponding modulus of
continuity as follows:
$$\omega_f(r)= \sup_{|y|\leq
  r}\|f(\cdot+y)-f\|_{C_\textup{b}(\ren)}.$$
For a H\"older continuous function $f\in C^{0,\beta}(\ren)$,
$\omega_f(r)\leq |f|_{C^{0,\beta}}r^\beta$.

We will also need $e_i:=(0,0,\ldots,0,1,0,\ldots,0)\in\ren$, where $1$ is at the $i$th component.

\subsection{Alternative characterization of the infinity fractional Laplacian}

We have the following alternative characterization of operator $\ifl$ that we will use throughout:

\begin{lemma}[Alternative characterization]\label{lem:EquivalentDefIFL}
Assume $\phi \in C^{1,1}(x)\cap B(\ren)$. Then:
\begin{enumerate}[$\bullet$]
\item If $\nabla\phi(x)\not=0$, then
\begin{equation*}%\label{eq:def2a}
\ifl \phi(x)=C_s\int_0^\infty\big(\phi\left(x+ \eta  \zeta\right)+\phi\left(x- \eta  \zeta\right)-2\phi(x)\big)\frac{\dd  \eta }{ \eta^{1+2s}}\qquad\text{where $\zeta:=\nabla\phi(x)/|\nabla\phi(x)|$.}
\end{equation*}
\item If $\nabla\phi(x)=0$, then
\begin{equation*}%\label{eq:def2b}
\ifl \phi(x)=C_s\sup_{|y|=1} \int_{0}^\infty\big(\phi(x+ \eta y)-\phi(x)\big) \frac{\dd \eta}{\eta^{1+2s}} + C_s\inf_{|y|=1} \int_{0}^\infty\big(\phi(x-\eta y)-\phi(x)\big) \frac{\dd \eta}{\eta^{1+2s}}.
\end{equation*}
\end{enumerate}
\end{lemma}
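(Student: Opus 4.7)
The plan is to use the Taylor expansion afforded by $C^{1,1}(x)$-regularity and to analyze the convergence of the defining integral near $\eta=0$, exploiting the fact that the $\sup$--$\inf$ structure forces divergent contributions to drop out. Write
$$\phi(x+\eta z)-\phi(x) = \eta\,\nabla\phi(x)\cdot z + R(\eta,z), \qquad |R(\eta,z)|\le C_x\eta^2 \quad \text{for } \eta<\eta_x,\ |z|=1,$$
so that the integrand in the definition of $\ifl\phi(x)$ becomes
$$\phi(x+\eta y)+\phi(x-\eta\tilde y)-2\phi(x) = \eta\,\nabla\phi(x)\cdot(y-\tilde y) + R_1(\eta,y,\tilde y),$$
with $|R_1|\le 2C_x\eta^2$. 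Dividing by $\eta^{1+2s}$ and using $s\in(1/2,1)$, the remainder contributes $O(\eta^{1-2s})$, which is integrable at zero, while the linear term becomes $\eta^{-2s}\nabla\phi(x)\cdot(y-\tilde y)$, which is \emph{not} integrable at zero since $2s>1$. The tail $\eta\ge 1$ is bounded by $4\|\phi\|_{C_{\mathrm b}}\int_1^\infty\eta^{-1-2s}\,\dd\eta<\infty$. Hence $I(y,\tilde y):=\int_0^\infty(\phi(x+\eta y)+\phi(x-\eta\tilde y)-2\phi(x))\,\dd\eta/\eta^{1+2s}$ is finite precisely when $\nabla\phi(x)\cdot(y-\tilde y)=0$, and otherwise $I(y,\tilde y)=\sign(\nabla\phi(x)\cdot(y-\tilde y))\cdot\infty$.

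Assume first $\nabla\phi(x)\ne 0$ and set $\zeta=\nabla\phi(x)/|\nabla\phi(x)|$. For any $|y|=1$ with $y\ne\zeta$, the choice $\tilde y=\zeta$ gives $\nabla\phi(x)\cdot(y-\zeta)=|\nabla\phi(x)|(y\cdot\zeta-1)<0$, so $I(y,\zeta)=-\infty$ and hence $\inf_{|\tilde y|=1}I(y,\tilde y)=-\infty$. For $y=\zeta$, any $\tilde y\ne\zeta$ gives $\nabla\phi(x)\cdot(\zeta-\tilde y)=|\nabla\phi(x)|(1-\zeta\cdot\tilde y)>0$, so $I(\zeta,\tilde y)=+\infty$; the infimum in $\tilde y$ is thus attained at $\tilde y=\zeta$ and equals the symmetric integral along the gradient direction. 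Taking the supremum in $y$ then selects $y=\zeta$ and yields the first bullet.

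Assume next $\nabla\phi(x)=0$. The linear term vanishes identically, so $I(y,\tilde y)$ is finite for every choice of $y,\tilde y$. Moreover, each one-sided integral $\int_0^\infty(\phi(x+\eta y)-\phi(x))\,\dd\eta/\eta^{1+2s}$ converges on its own: the Taylor remainder gives an $O(\eta^{1-2s})$ integrand near zero, and $\phi$ is bounded at infinity. Consequently $I(y,\tilde y)$ splits additively as a function of $y$ plus a function of $\tilde y$, and the $\sup$--$\inf$ separates into an independent supremum and infimum, producing the second bullet.

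The main delicate point is the divergence argument in Step~1: one has to check carefully that the divergent linear contribution really dominates, i.e., that a bounded remainder plus a convergent tail cannot compensate an integral of the form $a\int_0^{\eta_x}\eta^{-2s}\,\dd\eta$ with $a\ne 0$. This is routine once one truncates the integral at $\eta_x$ and at $1$, bounds the three pieces separately, and lets the cutoff at $0$ go to $0$; everything else is bookkeeping.
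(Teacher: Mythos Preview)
Your proposal is correct and takes essentially the same approach as the paper: both exploit the $C^{1,1}$ Taylor expansion and the divergence of $\int_0^{\eta_x}\eta^{-2s}\,\dd\eta$ (for $s>1/2$) to force $y=\tilde y=\zeta$ when $\nabla\phi(x)\neq 0$, and the additive separation of the convergent one-sided integrals when $\nabla\phi(x)=0$. The paper's sketch argues by contradiction (assuming the supremum is attained at some $y$ and showing $y=\zeta$), while your case analysis computes $\inf_{\tilde y}I(y,\tilde y)$ directly for each $y$; the substance is identical, and the paper refers to \cite{DTEnLe22} for the full details you have spelled out.
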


The equivalence when $\nabla \phi(x)=0$ follows from the fact that the
integrals in this case are well-defined and can be combined to
get \eqref{eq:def1}. When $\nabla\phi(x)\neq0$, it can be shown that the supremum and infimum of \eqref{eq:def1} is actually taken at $\zeta$, see Proposition 2.2 in \cite{DTEnLe22}. To sketch the proof, assume for simplicity that the supremum in \eqref{eq:def1} is taken at $y$, and let us argue that $y=\zeta$. Indeed, by splitting the integral and using the definitions of $C^{1,1}$ and the infimum,
$$
\ifl \phi(x)\leq C_s\int_0^\infty\big(\phi\left(x+\eta y\right)+\phi\left(x-\eta \zeta\right)-2\phi(x)\big)\frac{\dd \eta}{\eta^{1+2s}}\leq C_s\big(\nabla\phi(x)\cdot(y-\zeta)\big)\int_0^{\eta_x}\eta \frac{\dd \eta}{\eta^{1+2s}}+C.
$$
Now, since $\ifl \phi(x)$ is well-defined and the integral diverges if $y\not=\zeta$, we must have $y=\zeta$. A similar argument holds for the infimum.

\subsection{Existence of solutions and basic properties}
We are able to construct a suitable class of viscosity solutions of \eqref{eq:EQ}--\eqref{eq:BC}. The two steps are as follows:\\
(i) Approximating $\ifl$ by removing the singularity, i.e., we introduce
\begin{equation*}
\ifleps[\phi](x):=C_s \sup_{|y|=1}\inf_{|\tilde{y}|=1}\int_\veps^\infty\big(\phi(x+\eta y)+\phi(x-\eta \tilde{y})-2\phi(x)\big)\frac{\dd \eta }{\eta ^{1+2s}}.
\end{equation*}
(ii) Discretizing in time by letting $\tau>0$ and $t_j:=j\tau$ for $j\in\N$, i.e., $t_j\in \tau\N$, and then considering the semidiscrete problem
\begin{empheq}[left=\empheqlbrace]{align}
\frac{U^{j+1}(x)-U^j(x)}{\tau}&=\ifleps[U^j](x), \hspace{-2cm} &x \in \ren& , j\in \N  ,\label{eq:EQD}\\
U^0 (x) &= u_0(x),  \hspace{-2cm} & x \in \ren& . \label{eq:BCD}
\end{empheq}
We study the properties of \eqref{eq:EQD}--\eqref{eq:BCD} in Section \ref{sec:scheme}.
Existence of viscosity solutions follows by taking the limit in this approximate scheme, as well as properties inherited from the approximations.

\begin{theorem}[Existence and a priori results]\label{thm:ExistenceAPrioriViscosity}
 If $u_0\in BUC(\ren)$, then there is at least one viscosity solution $u\in C_{\textup{b}}(\ren\times[0,\infty))$ of \eqref{eq:EQ}--\eqref{eq:BC}. Moreover:
\begin{enumerate}[\rm (a)]
\item\label{thm:ExistenceAPrioriViscosity-item1} \textup{($C_\textup{b}$-bound)} For all $t>0$, $\|u(\cdot,t)\|_{C_\textup{b}(\ren)}\leq \|u_0\|_{C_\textup{b}(\ren)}$.
\item\label{thm:ExistenceAPrioriViscosity-item2} \textup{(Uniform continuity in space)} For all $y\in\ren$ and all $t>0$,
\item[]\quad$\|u(\cdot+y,t)-u(\cdot,t)\|_{C_\textup{b}(\ren)}
  %\leq \|u_0(\cdot+y)-u_0\|_{C_\textup{b}(\ren)}
  \leq \omega_{u_0}(|y|).$\smallskip
\item\label{thm:ExistenceAPrioriViscosity-item3}
  \textup{(Uniform continuity in time)} For all $t,\tilde{t}>0$,%\\[0.3cm]
\item[]\quad$\|u(\cdot,t)-u(\cdot,\tilde{t})\|_{C_\textup{b}(\ren)}\leq \tilde
  \omega(|t-\tilde t|)\qquad\text{where $\tilde
  \omega(r):=\inf_{\delta>0}\Big\{\omega_{u_0}(\delta)+r\sup_{\veps>0}\|\ifleps[u_{0,\delta}]\|_{C_\textup{b}(\ren)}\Big\}$}$
  \item[] is a modulus satisfying
    $\tilde\omega(r)\leq \omega_{u_0}(r^{1/3})
+C\big(r^{1/3}+r\big)$, $C:=c_s\|u_0\|_{C_\textup{b}(\ren)}\|\nabla\rho\|^{2-2s}_{L^1(\ren)}\|D^2\rho\|_{L^1(\ren)}^{2s-1}$, and $\rho$ is a standard mollifier.
%\max\{\frac{1}{2(1-s)}\|\nabla^2\rho\|_{L^1(\ren)}, \frac{2}{s}\}.$\nc
\end{enumerate}
\end{theorem}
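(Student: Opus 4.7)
The plan is to construct the solution as a locally uniform subsequential limit of the piecewise-linear time interpolant of the iterates $\{U^j\}$ defined by \eqref{eq:EQD}--\eqref{eq:BCD}, letting $\tau,\veps \to 0^+$, and to read off (a)--(c) directly from the scheme. The three structural ingredients are: $\ifleps$ annihilates constants, $\ifleps$ is translation invariant, and the update $S\phi := \phi + \tau\ifleps[\phi]$ is monotone under a CFL condition $\tau \le \tau_0(\veps)$. Combined, the first and third yield the $L^\infty$-contraction $\|S\phi - S\psi\|_{C_\textup{b}(\ren)} \le \|\phi-\psi\|_{C_\textup{b}(\ren)}$; comparison with the constants $\pm\|u_0\|_{C_\textup{b}(\ren)}$ then gives (a). Using translation invariance, the iterate $U^j(\cdot + y)$ coincides with the scheme started from $u_0(\cdot+y)$, and the contraction gives $\|U^j(\cdot+y)-U^j(\cdot)\|_{C_\textup{b}(\ren)} \le \omega_{u_0}(|y|)$ at every discrete time, which is (b).

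For (c), comparing the shifted sequence $V^j := U^{j+1}$ (the scheme started from $U^1$) with $U^j$ via the contraction yields $\|U^{j+1}-U^j\|_{C_\textup{b}(\ren)} \le \|U^1-u_0\|_{C_\textup{b}(\ren)} = \tau\|\ifleps[u_0]\|_{C_\textup{b}(\ren)}$, and summing produces
\begin{equation*}
\|U^{j+k}-U^j\|_{C_\textup{b}(\ren)} \le (t_{j+k}-t_j)\,\|\ifleps[u_0]\|_{C_\textup{b}(\ren)}.
\end{equation*}
Since $\ifleps[u_0]$ need not be bounded uniformly in $\veps$ when $u_0$ is merely $BUC$, I would regularize: with $u_{0,\delta} := u_0 \ast \rho_\delta$ and $U_\delta^j$ the corresponding scheme iterate, the contraction gives $\|U^j - U_\delta^j\|_{C_\textup{b}(\ren)} \le \omega_{u_0}(\delta)$, whence
\begin{equation*}
\|U^{j+k}-U^j\|_{C_\textup{b}(\ren)} \le 2\omega_{u_0}(\delta) + (t_{j+k}-t_j)\sup_{\veps>0}\|\ifleps[u_{0,\delta}]\|_{C_\textup{b}(\ren)},
\end{equation*}
and $\inf_{\delta>0}$ reproduces the formula for $\tilde\omega$. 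To unpack this explicitly, the key pointwise estimate is that for $\phi \in C^2_\textup{b}(\ren)$,
\begin{equation*}
\sup_{\veps>0}\|\ifleps[\phi]\|_{C_\textup{b}(\ren)} \le c_s\,\|\nabla\phi\|_{C_\textup{b}(\ren)}^{2-2s}\,\|D^2\phi\|_{C_\textup{b}(\ren)}^{2s-1},
\end{equation*}
obtained by splitting at a radius $R$, using the first-order cancellation afforded by the $\sup$-$\inf$ on $\{\eta\le R\}$ and the uniform bound $4\|\phi\|_{C_\textup{b}(\ren)}$ on $\{\eta>R\}$, then optimizing in $R$. Applying the standard convolution estimates $\|\nabla u_{0,\delta}\|_{C_\textup{b}(\ren)} \le \|u_0\|_{C_\textup{b}(\ren)}\delta^{-1}\|\nabla\rho\|_{L^1(\ren)}$ and $\|D^2 u_{0,\delta}\|_{C_\textup{b}(\ren)} \le \|u_0\|_{C_\textup{b}(\ren)}\delta^{-2}\|D^2\rho\|_{L^1(\ren)}$ yields $\sup_\veps\|\ifleps[u_{0,\delta}]\|_{C_\textup{b}(\ren)} \le C\delta^{-2s}$ with exactly the constant $C$ of the statement; choosing $\delta=r^{1/3}$ for $r$ small and $\delta=1$ for $r$ large gives the stated modulus.

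To conclude, the piecewise-linear interpolant $u^{\tau,\veps}$ is uniformly bounded and uniformly equicontinuous in both variables, with moduli independent of $(\tau,\veps)$ (the time modulus being $\tilde\omega$ up to an $O(\tau)$ interpolation error). A diagonal Arzel\`a--Ascoli argument extracts a locally uniform subsequential limit $u\in C_\textup{b}(\ren\times[0,\infty))$ inheriting (a)--(c), and the Barles--Souganidis framework (monotone, $L^\infty$-stable, consistent scheme), together with stability of viscosity solutions under uniform convergence, identifies $u$ as a viscosity solution of \eqref{eq:EQ}--\eqref{eq:BC}. The main obstacle is the uniform-in-$\veps$ bound on $\ifleps[u_{0,\delta}]$: the $\sup$-$\inf$ structure precludes estimating the integrand pointwise by second-order quantities, and one must exploit that at the optimum the gradient contribution cancels (exactly as in the heuristic following Lemma \ref{lem:EquivalentDefIFL}) before bounding the residual near $\eta=0$.
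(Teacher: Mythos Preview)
Your derivation of (a)--(c) and the compactness step follows the paper's argument essentially verbatim: the paper packages these as Proposition~\ref{prop:propscheme}, Corollary~\ref{cor:propscheme}, Proposition~\ref{prop:SemiDiscreteCompactness}, and Corollary~\ref{cor:PropertiesOfLimitFunction}, using exactly the contraction/translation/time-shift/mollification arguments you sketch, and the uniform bound on $\ifleps[\phi]$ is Lemma~\ref{lem:unifBoundLveps}. One small slip: your proof sketch for that bound uses $4\|\phi\|_{C_\textup{b}}$ on the tail $\{\eta>R\}$, which after optimizing in $R$ yields $\|\phi\|_{C_\textup{b}}^{1-s}\|D^2\phi\|_{C_\textup{b}}^{s}$, not the stated $\|\nabla\phi\|_{C_\textup{b}}^{2-2s}\|D^2\phi\|_{C_\textup{b}}^{2s-1}$. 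The paper instead bounds the tail via the mean value theorem by $\|\nabla\phi\|_{C_\textup{b}}\,\eta$ (this is where $s>1/2$ enters, to make $\int_R^\infty \eta^{-2s}\,\dd\eta$ finite), and only then do you recover the constant $C$ in the statement.

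The more substantial issue is your final step. Invoking ``Barles--Souganidis'' is misleading: that framework identifies the half-relaxed limits as sub- and supersolutions and then \emph{uses a comparison principle for the limiting equation} to conclude they coincide. The paper explicitly lacks such a comparison principle (this is stressed repeatedly; see the discussion before Theorem~\ref{thm:comparisonsmooth}), so Barles--Souganidis does not close. What the paper does instead is exploit that the convergence is already \emph{locally uniform} (not merely half-relaxed), and then verify the viscosity inequalities directly by the perturbed-maximum argument: transfer the test function to $u_\veps$, use monotonicity of the scheme to replace $u_\veps$ by $\phi$, and pass to the limit via consistency. This last passage requires care you have not addressed. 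The viscosity definition here (Definition~\ref{def:viscsol}) is nonstandard at points where $\nabla\phi(x_0,t_0)=0$, using the relaxed operators $\iflp,\iflm$ rather than $\ifl$, and the consistency Lemma~\ref{lem:Consistency} gives $\ifleps\to\ifl$ with an error depending on $|\nabla\phi|^{-1}$, which blows up at such points. The paper handles the zero-gradient case separately by first bounding $\ifleps[\phi]\le \iflp\phi + o_\veps(1)$ (a one-sided estimate not requiring $\nabla\phi\neq0$) and then proving $\iflp\phi(x_\veps,t_j)\to\iflp\phi(x_0,t_0)$ by dominated convergence. Without this distinction your identification of the limit as a viscosity solution has a genuine gap precisely at zero-gradient test points.
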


\begin{remark}
  The 
  definition of viscosity solutions is given %can be found in
  Section \ref{sec:existence} (Definition \ref{def:viscsol}).
  We obtain viscosity solution as limits of monotone approximations of
  the problem in Section \ref{sec:scheme}. 
  \end{remark}

Note that if $u_0$ is H\"older continuous and $s\in(1/2,1)$, then the above modulii will
be (more) explicit.

\begin{lemma}\label{lem:Holder}
If $u_0\in C^{0,\beta}(\ren)$ for $\beta\in(0,1]$, then
    $$\omega_{u_0}(\delta)=|u_0|_{C^{0,\beta}}\delta^\beta\qquad\text{and}\qquad
    %\tilde\omega(r)=
    \|\ifleps[u_{0,\delta}]\|_{C_\textup{b}(\ren)}\leq c(s,\rho) |u_0|_{C^{0,\beta}}  
%c_{s}\|Du_{0,\delta}\|_{C_\textup{b}(\ren)}^{2-2s}\|D^2u_{0,\delta}\|_{C_\textup{b}(\ren)}^{2s-1}\leq 
%\|D\rho\|_{C_\textup{b}(\ren)}^{2-2s}\|D^2\rho\|_{C_\textup{b}(\ren)}^{2s-1}\|u_0\|_{C^{0,\beta}}
\delta^{\beta-2s}.
    $$
  \end{lemma}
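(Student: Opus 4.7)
The first identity is essentially a restatement of the Hölder bound already recorded before the lemma: for every $|y|\leq\delta$ one has $\|u_0(\cdot+y)-u_0\|_{C_\textup{b}(\ren)}\leq|u_0|_{C^{0,\beta}}|y|^\beta\leq|u_0|_{C^{0,\beta}}\delta^\beta$, so the bound on $\omega_{u_0}(\delta)$ is immediate from its definition.

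The estimate on $\|\ifleps[u_{0,\delta}]\|_{C_\textup{b}(\ren)}$ is the substantial part, and my plan is first to rewrite the operator in a separable form. Since the integrand in the definition of $\ifleps$ splits as $(\phi(x+\eta y)-\phi(x))+(\phi(x-\eta\tilde y)-\phi(x))$, with $y$ appearing only in the first summand and $\tilde y$ only in the second, the sup--inf factors; after the change of variables $\tilde y\mapsto-\tilde y$ in the inf,
\[
\ifleps[\phi](x)=C_s\Big[\sup_{|y|=1}B(y)+\inf_{|y|=1}B(y)\Big], \qquad B(y):=\int_\veps^\infty\big(\phi(x+\eta y)-\phi(x)\big)\frac{\dd \eta}{\eta^{1+2s}}.
\]
The key cancellation is the elementary inequality $\big|\sup_y B(y)+\inf_y B(y)\big|\leq\sup_y|B(y)+B(-y)|$: if $y^\ast$ and $y^{\ast\ast}$ attain the sup and inf respectively, then $B(y^\ast)+B(y^{\ast\ast})\leq B(y^\ast)+B(-y^\ast)$ (since $B(y^{\ast\ast})\leq B(-y^\ast)$), and symmetrically $B(y^\ast)+B(y^{\ast\ast})\geq B(-y^{\ast\ast})+B(y^{\ast\ast})$. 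This reduces the estimate to bounding the symmetric second difference $\phi(x+\eta y)+\phi(x-\eta y)-2\phi(x)$ integrated against $\eta^{-1-2s}\dd\eta$.

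For $\phi=u_{0,\delta}$ I would split the resulting integral at $\eta=\delta$ (if $\veps\geq\delta$ only the tail estimate is needed). On $[\delta,\infty)$, the Hölder bound $|u_{0,\delta}(x\pm\eta y)-u_{0,\delta}(x)|\leq|u_0|_{C^{0,\beta}}\eta^\beta$, which the mollification inherits from $u_0$, yields a contribution of order $|u_0|_{C^{0,\beta}}\delta^{\beta-2s}/(2s-\beta)$. On $[\veps,\delta]$, a second-order Taylor expansion gives $|u_{0,\delta}(x+\eta y)+u_{0,\delta}(x-\eta y)-2u_{0,\delta}(x)|\leq\|D^2u_{0,\delta}\|_\infty\,\eta^2$, and the mollifier estimate $\|D^2u_{0,\delta}\|_\infty\leq c(\rho)|u_0|_{C^{0,\beta}}\delta^{\beta-2}$ follows by writing $D^2u_{0,\delta}(x)=\int[u_0(x-z)-u_0(x)]\,D^2\rho_\delta(z)\,\dd z$ (using $\int D^2\rho_\delta=0$) and scaling $\rho_\delta(z)=\delta^{-n}\rho(z/\delta)$. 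Integration of $\|D^2u_{0,\delta}\|_\infty\,\eta^{1-2s}$ on $(0,\delta]$ then produces a contribution of order $c(\rho)|u_0|_{C^{0,\beta}}\delta^{\beta-2s}/(2-2s)$, and combining the two ranges yields the claimed bound, uniform in $\veps>0$.

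The main obstacle is precisely the cancellation step. A direct bound $|\ifleps[\phi](x)|\leq 2C_s\sup_y|B(y)|$ would fail because for smooth $\phi$ with $\nabla\phi(x)\neq 0$ one has $\phi(x+\eta y)-\phi(x)=O(\eta)$ near zero, and $\int_0^\delta\eta\cdot\eta^{-1-2s}\dd\eta$ diverges for $s>1/2$. The sup--inf structure of $\ifleps$ encodes the cancellation of this linear singularity, and capturing it through the inequality $|\sup B+\inf B|\leq\sup_y|B(y)+B(-y)|$ is the conceptual heart of the estimate.
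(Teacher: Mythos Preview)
Your argument is correct but proceeds differently from the paper. The paper simply invokes Lemma~\ref{lem:unifBoundLveps}, which gives $|\ifleps[\phi]|\leq c(s)\|\nabla\phi\|_{C_\textup{b}}^{2-2s}\|D^2\phi\|_{C_\textup{b}}^{2s-1}$, and then substitutes the mollifier bounds $\|D^k u_{0,\delta}\|_{C_\textup{b}}\leq c(\rho)|u_0|_{C^{0,\beta}}\delta^{\beta-k}$ for $k=1,2$; the exponents combine to $\beta-2s$ without further work. The cancellation of the linear singularity is thus hidden inside Lemma~\ref{lem:unifBoundLveps}, whose proof adds a vanishing gradient term $\sup_y\inf_{\tilde y}\,p_x\cdot(y-\tilde y)=0$ before splitting the integral at an optimized radius.

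Your route achieves the same cancellation by the clean inequality $|\sup_y B(y)+\inf_y B(y)|\leq\sup_y|B(y)+B(-y)|$, reducing directly to a symmetric second difference. Splitting at $\eta=\delta$ then lets you use the inherited H\"older modulus of $u_{0,\delta}$ on the tail and only the $D^2$ mollifier estimate near the origin; you never touch $\|\nabla u_{0,\delta}\|$ and there is no optimization step. This is a more self-contained argument and arguably more transparent, at the cost of reproving a special case of Lemma~\ref{lem:unifBoundLveps} rather than citing it.
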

The above result will be proved at the end of Section \ref{sec:existence}.

It follows after a minimization in $\delta$ that
$\tilde\omega(r)=c(s,\rho)|u_0|_{C^{0,\beta}}r^{\frac 1{2s}}$, and the
solution $u$ will be H\"older continuous with the correct parabolic regularity.

\begin{corollary}[Existence and a priori results]\label{thm:ExistenceAPrioriViscosity_cor}
If $u_0\in C^{0,\beta}(\ren)$ for $\beta\in(0,1]$,
  then there is at least one viscosity solution $u\in C_{\textup{b}}(\ren\times[0,\infty))$ of \eqref{eq:EQ}--\eqref{eq:BC}. Moreover:
\begin{enumerate}[\rm (a)]
\item\label{thm:ExistenceAPrioriViscosity-item1} \textup{($C_\textup{b}$-bound)} For all $t>0$ $\|u(\cdot,t)\|_{C_\textup{b}(\ren)}\leq \|u_0\|_{C_\textup{b}(\ren)}$.
\item\label{thm:ExistenceAPrioriViscosity-item2} \textup{(H\"older in
  space)} For all $y\in\ren$ and all $t>0$,
\item[]\quad$\|u(\cdot+y,t)-u(\cdot,t)\|_{C_\textup{b}(\ren)}\leq
|u_0|_{C^{0,\beta}}|y|^\beta.$\smallskip
\item\label{thm:ExistenceH\"older in time)} \textup{(H\"older in time)} There is a constant $c(s,\rho)$ only depending on $s$ and
  $\rho$ such that for all $t,\tilde{t}>0$,%\\[0.3cm]
\item[]\quad$\|u(\cdot,t)-u(\cdot,\tilde{t})\|_{C_\textup{b}(\ren)}\leq
  C |u_0|_{C^{0,\beta}}|t-\tilde t|^{\frac\beta{2s}}$.
\end{enumerate}
\end{corollary}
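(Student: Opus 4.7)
The plan is to deduce this corollary directly from Theorem \ref{thm:ExistenceAPrioriViscosity} and Lemma \ref{lem:Holder}, since H\"older continuous bounded functions lie in $BUC(\ren)$. First, because $u_0\in C^{0,\beta}(\ren)\subset BUC(\ren)$, Theorem \ref{thm:ExistenceAPrioriViscosity} immediately yields a viscosity solution $u\in C_\textup{b}(\ren\times[0,\infty))$ of \eqref{eq:EQ}--\eqref{eq:BC} together with the $C_\textup{b}$-bound in item (a) and the modulus-of-continuity statements in items (b) and (c). Item (a) of the corollary is therefore nothing new, and item (b) follows from item (b) of the theorem together with the identity $\omega_{u_0}(r)=|u_0|_{C^{0,\beta}}r^\beta$ provided by Lemma \ref{lem:Holder}.

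The only computation to do is for item (c). I would plug the two estimates of Lemma \ref{lem:Holder} into the expression
$$\tilde\omega(r)=\inf_{\delta>0}\Big\{\omega_{u_0}(\delta)+r\sup_{\veps>0}\|\ifleps[u_{0,\delta}]\|_{C_\textup{b}(\ren)}\Big\}$$
supplied by Theorem \ref{thm:ExistenceAPrioriViscosity}(c), to obtain
$$\tilde\omega(r)\leq |u_0|_{C^{0,\beta}}\inf_{\delta>0}\bigl\{\delta^\beta+c(s,\rho)\, r\,\delta^{\beta-2s}\bigr\}.$$
Since $\beta-2s<0$, the two terms balance at $\delta=\delta(r)\asymp r^{1/(2s)}$, and a direct substitution gives $\tilde\omega(r)\leq C|u_0|_{C^{0,\beta}}r^{\beta/(2s)}$ for a constant $C=C(s,\rho)$. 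Inserting this into item (c) of Theorem \ref{thm:ExistenceAPrioriViscosity} yields the claimed H\"older-in-time estimate.

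There is essentially no obstacle here: all the work has already been done in Theorem \ref{thm:ExistenceAPrioriViscosity} (existence and the abstract modulus $\tilde\omega$) and in Lemma \ref{lem:Holder} (the two quantitative bounds that make the infimum computable). The only subtlety to be careful about is that the exponent $\beta/(2s)\in(0,1)$ because $s>1/2\geq\beta/2$, so the resulting time modulus is genuinely H\"older and the optimization in $\delta$ is legitimate.
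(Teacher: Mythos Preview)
Your proposal is correct and follows exactly the paper's approach: the paper simply remarks (immediately before stating the corollary) that plugging the bounds of Lemma \ref{lem:Holder} into the definition of $\tilde\omega$ and minimizing in $\delta$ yields $\tilde\omega(r)=c(s,\rho)|u_0|_{C^{0,\beta}}r^{\beta/(2s)}$, which is precisely the computation you spell out.
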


\subsection{Classical solutions, radial solutions, comparison, and uniqueness}
There could be other ways of obtaining viscosity solutions, and
unfortunately, we lack general comparison and uniqueness
results. Nevertheless, we can obtain that classical solutions are unique and we can compare our constructed viscosity solutions with classical sub- and supersolutions of \eqref{eq:EQ}--\eqref{eq:BC}.\footnote{We will work with classical solutions in $C_{\textup{b}}^2$. Actually, we can reduce to $C_{\textup{b}}^1$ for the temporal variable, and to $C^{1,1}\cap B$ for the spatial variables.}

\begin{theorem}[Comparison between classical and viscosity solutions]\label{thm:comparisonsmooth}
Assume $u_0\in BUC(\ren)$. Let  $\underline{u},\overline{u}\in C^2_\textup{b}(\ren\times[0,\infty))$ be respective 
  classical sub- and supersolution of \eqref{eq:EQ}--\eqref{eq:BC}, and let
  $u\in BUC(\ren\times[0,\infty)$ be a
    viscosity solution of \eqref{eq:EQ}--\eqref{eq:BC} as
    constructed in Theorem \ref{thm:ExistenceAPrioriViscosity}. Then $ \underline{u} \leq u \leq \overline{u}$ in
    $\ren\times(0,\infty)$.
\end{theorem}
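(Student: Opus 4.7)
The plan is to leverage the explicit construction of $u$ as the limit of the semi-discrete monotone scheme \eqref{eq:EQD}--\eqref{eq:BCD} and to compare $\overline{u}$ (resp.\ $\underline{u}$) directly with the scheme iterates $U^j$. The classical smoothness of $\overline{u}\in C^2_\textup{b}$ converts its pointwise PDE inequality into a discrete supersolution inequality which, by monotonicity of the scheme, propagates from the initial data to all time levels. I will treat the supersolution case; the subsolution case is completely symmetric.

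\textbf{Step 1 (Consistency of $\ifleps$ on smooth functions).} For $\phi\in C^2_\textup{b}(\ren)$, the bound $|\phi(x+\eta y)+\phi(x-\eta\tilde y)-2\phi(x)|\leq \|D^2\phi\|_{C_\textup{b}}\eta^2$ shows that the contribution to \eqref{eq:def1} of the range $\eta\in[0,\veps]$ is at most $C(s)\|D^2\phi\|_{C_\textup{b}}\veps^{2-2s}$, hence
\[
\bigl|\ifleps[\phi](x) - \ifl\phi(x)\bigr| \leq C(s)\|D^2\phi\|_{C_\textup{b}(\ren)}\veps^{2-2s}.
\]
A Taylor expansion in time for $\overline{u}\in C^2_\textup{b}$ together with the classical supersolution inequality $\partial_t \overline{u}\geq \ifl\overline{u}$ gives
\[
\overline{u}(x,t_{j+1}) \geq \overline{u}(x,t_j) + \tau\,\ifleps[\overline{u}(\cdot,t_j)](x) - \tau\,E(\veps,\tau),
\]
with $E(\veps,\tau)\to 0$ as $\veps,\tau\to 0$, depending only on $\|\overline{u}\|_{C^2_\textup{b}}$.

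\textbf{Step 2 (Monotone comparison against the scheme).} Since $\ifleps[\phi+c]=\ifleps[\phi]$, setting $V^j(x):=\overline{u}(x,t_j) + j\tau E(\veps,\tau)$ upgrades the previous inequality to the exact discrete supersolution property $V^{j+1} \geq V^j + \tau\,\ifleps[V^j]$. On the other hand, the identity
\[
\phi + \tau\,\ifleps[\phi] = \phi\Bigl(1-\tfrac{\tau C_s}{s\veps^{2s}}\Bigr) + \tau C_s \sup_{|y|=1}\inf_{|\tilde y|=1}\int_\veps^\infty \bigl(\phi(\cdot+\eta y) + \phi(\cdot-\eta\tilde y)\bigr)\tfrac{\mathrm{d}\eta}{\eta^{1+2s}}
\]
shows that under the CFL condition $\tau\leq s\veps^{2s}/C_s$ (the regime used in Section~\ref{sec:scheme}), the map $\phi\mapsto\phi+\tau\,\ifleps[\phi]$ is monotone on $B(\ren)$: the diagonal term has nonnegative coefficient, and the integral term is pointwise monotone in $\phi$, a property preserved by the successive $\inf_{\tilde y}$ and $\sup_y$. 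Combined with $V^0=\overline{u}(\cdot,0)\geq u_0=U^0$, induction yields $V^j\geq U^j$ for all $j$. Passing to the limit $\veps,\tau\to 0$ along the CFL scaling of Section~\ref{sec:scheme}, $U^j\to u(\cdot,t)$ and $V^j = \overline{u}(\cdot,t_j) + t_j E(\veps,\tau) \to \overline{u}(\cdot,t)$, giving $\overline{u}\geq u$ on $\ren\times(0,\infty)$. The symmetric argument with $\underline{u}$ gives $\underline{u}\leq u$.

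The main obstacle I anticipate is verifying monotonicity of the scheme map against the $\sup\inf$ structure of $\ifleps$: one has to check that the diagonal coefficient $1-\tau C_s/(s\veps^{2s})$ is non-negative (pure CFL) and that the nonlinear integral term is monotone in $\phi$, the latter being true because the integrand depends on $\phi$ only through values at points different from $x$, so sup's and inf's respect the pointwise order. A secondary bookkeeping point is ensuring that the CFL scaling used here coincides with the one under which the convergence $U^j\to u$ is established in Section~\ref{sec:scheme}; otherwise the final limit would not produce the particular constructed viscosity solution to which the statement refers.
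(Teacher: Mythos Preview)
Your overall strategy is exactly the one the paper uses: show that the classical supersolution $\overline u$ is a discrete supersolution of the scheme up to a consistency error, use monotonicity under the CFL condition to propagate the comparison $U^j\leq \overline u(\cdot,t_j)+t_j E$, and pass to the limit along the approximating sequence that produces the constructed viscosity solution. Your Step~2 is correct and is in fact a slightly cleaner repackaging of the paper's error-iteration argument in Proposition~\ref{prop:NumMethSmooth_new}.

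There is, however, a genuine gap in Step~1. The inequality
\[
|\phi(x+\eta y)+\phi(x-\eta\tilde y)-2\phi(x)|\leq \|D^2\phi\|_{C_\textup{b}}\,\eta^2
\]
is false when $y\neq\tilde y$: the first-order Taylor terms give $\eta\,\nabla\phi(x)\cdot(y-\tilde y)$, which does not cancel. For $s>1/2$ this first-order contribution to $\int_0^\veps(\cdots)\eta^{-1-2s}\,\mathrm d\eta$ is of order $\veps^{1-2s}\to\infty$, so you cannot bound the ``contribution of the range $[0,\veps]$'' uniformly in $y,\tilde y$ and then pass the bound through the $\sup\inf$. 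In particular your displayed consistency estimate $|\ifleps[\phi]-\ifl\phi|\leq C\|D^2\phi\|_{C_\textup{b}}\veps^{2-2s}$ does not follow from the argument given.

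The paper handles consistency via Lemma~\ref{lem:Consistency} (taken from \cite{DTEnLe22}), which is a nontrivial result: it exploits that for $\nabla\phi(x)\neq0$ the $\sup$ and $\inf$ in $\ifl$ are both attained in the gradient direction $\zeta$ (Lemma~\ref{lem:EquivalentDefIFL}), so the operator effectively reduces to a \emph{symmetric} one-dimensional second difference and the $O(\eta^2)$ bound does hold along that direction; the analysis of $\ifleps$ near this optimum then produces an $o_\veps(1)$ error whose bound depends on $|\nabla\phi(x)|^{-1}$, not purely on $\|D^2\phi\|_{C_\textup{b}}$. You should invoke this lemma (or reproduce its argument) in place of your Step~1; the rest of your proof then goes through as written.
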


The above result is proved in Section \ref{sec:compsmooth}. We want to emphasize that it is done in a rather nonstandard way, since we inherit the comparison from the approximation scheme when the solution is classical. In general, this cannot be done in the context of viscosity solutions since the approximation scheme only converges up to a subsequence. 

\begin{remark}
By Theorem \ref{thm:comparisonsmooth}, we can in addition get comparison of constructed viscosity solutions as long as the initial datas are separated by an initial data which produces a classical solution.
\end{remark}

An immediate consequence of Theorem \ref{thm:comparisonsmooth}:

\begin{corollary}[Comparison of classical sub- and supersolutions]
Let $u,v\in C^2_\textup{b}(\ren\times[0,\infty))$ be respective classical sub- and supersolutions of \eqref{eq:EQ}--\eqref{eq:BC} with initial data $u_0,v_0$. If $u_0\leq v_0$, then $u\leq v$.
\end{corollary}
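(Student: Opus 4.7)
The plan is to reduce the statement to a single application of Theorem~\ref{thm:comparisonsmooth} by inserting a constructed viscosity solution between $u$ and $v$. First I would check that, since $v \in C^2_\textup{b}(\ren\times[0,\infty))$, the initial trace $v_0 = v(\cdot,0)$ belongs to $BUC(\ren)$: boundedness is immediate from the $C_\textup{b}$ norm, and uniform continuity (in fact, Lipschitz continuity) follows from the uniform bound on $\nabla_x v(\cdot,0)$. Hence Theorem~\ref{thm:ExistenceAPrioriViscosity} produces a viscosity solution $w \in C_\textup{b}(\ren\times[0,\infty))$ of \eqref{eq:EQ}--\eqref{eq:BC} with initial datum $v_0$.

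Second, I would reinterpret $u$ and $v$ as classical sub- and supersolutions of \eqref{eq:EQ}--\eqref{eq:BC} relative to the common initial datum $v_0$. The function $v$ is trivially a classical supersolution with initial datum $v_0$ since $v(\cdot,0) = v_0$ and $\partial_t v \geq \ifl v$ by hypothesis. For $u$, the pointwise subsolution inequality $\partial_t u \leq \ifl u$ is part of the hypothesis, while the assumed ordering gives $u(\cdot,0) = u_0 \leq v_0$; thus $u$ is also a classical subsolution of \eqref{eq:EQ}--\eqref{eq:BC} with initial datum $v_0$ in the standard sense (PDE inequality plus the initial inequality $u(\cdot,0) \leq v_0$). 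Theorem~\ref{thm:comparisonsmooth} then sandwiches $u \leq w \leq v$ on $\ren \times (0,\infty)$, which together with $u(\cdot,0) = u_0 \leq v_0 = v(\cdot,0)$ yields $u \leq v$ on $\ren \times [0,\infty)$.

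There is no genuine obstacle here: Theorem~\ref{thm:comparisonsmooth} already carries out the nontrivial comparison between a scheme-generated viscosity solution and a classical one, and the role of the corollary is simply to notice that when $u_0 \leq v_0$ the pair $(u,v)$ can be viewed as a sub/supersolution pair relative to the single datum $v_0$ (one could equally well work with $u_0$ by the symmetric argument).
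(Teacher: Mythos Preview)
Your proposal is correct and matches the paper's intended argument: the paper simply states that the corollary is ``an immediate consequence of Theorem~\ref{thm:comparisonsmooth}'', and your insertion of a constructed viscosity solution with initial datum $v_0$ (after noting $v_0\in BUC(\ren)$) is precisely that consequence. The only detail worth making explicit is that the proof of Proposition~\ref{prop:NumMethSmooth_new} uses $u(\cdot,0)\leq u_0$ for the subsolution, so your reinterpretation of $u$ as a classical subsolution relative to the datum $v_0$ is exactly what is needed.
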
 

\begin{corollary}[Uniqueness of solutions]
Classical solutions of \eqref{eq:EQ}--\eqref{eq:BC} in $C^2_\textup{b}(\ren\times[0,\infty))$ are unique. 
\end{corollary}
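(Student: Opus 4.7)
The plan is to deduce uniqueness directly from the preceding Corollary on comparison of classical sub- and supersolutions, applying it twice with the roles of the two solutions swapped. This is essentially a one-line argument, so the ``work'' has already been done upstream in Theorem \ref{thm:comparisonsmooth}.

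More concretely, suppose $u,v\in C^2_\textup{b}(\ren\times[0,\infty))$ are both classical solutions of \eqref{eq:EQ}--\eqref{eq:BC} corresponding to the same initial datum $u_0\in BUC(\ren)$. Then by definition $u$ satisfies $\partial_t u = \ifl u$ (with equality), which means that $u$ is simultaneously a classical subsolution and a classical supersolution of the problem; the same holds for $v$. Since $u(\cdot,0)=u_0=v(\cdot,0)$, we trivially have $u(\cdot,0)\leq v(\cdot,0)$, and the comparison corollary for classical sub- and supersolutions gives $u\leq v$ on $\ren\times[0,\infty)$. Interchanging the roles (using $v(\cdot,0)\leq u(\cdot,0)$) yields $v\leq u$, and hence $u=v$.

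There is no real obstacle here: the structural work (constructing a viscosity solution sandwiched between classical sub- and supersolutions and deducing a comparison principle between classical solutions) has already been carried out in Theorem \ref{thm:comparisonsmooth} and its first corollary. The only thing one should be careful about is that the proof does not sneak in any assumption beyond what the previous corollary requires, but both the $C^2_\textup{b}$ regularity and the boundedness/uniform continuity of the initial datum (inherited from $u_0\in C^2_\textup{b}\cap BUC$) are available by hypothesis. Thus the corollary follows from two applications of the comparison principle.
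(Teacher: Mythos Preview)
Your argument is correct and matches the paper's approach: the paper states this corollary as an immediate consequence of the comparison corollary for classical sub- and supersolutions, without giving any further proof, and your two-application-of-comparison argument is exactly the intended one-liner.
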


Theorem \ref{thm:comparisonsmooth} might be an empty statement unless we provide a class of classical solutions of \eqref{eq:EQ}--\eqref{eq:BC}. The following result, proved in Section \ref{sec:radialsol}, solves this issue.

\begin{theorem}[Existence of  classical  radial  solutions]\label{thm:ExistenceClassicalSolution}
Assume that $u_0\in C_\textup{b}^\infty(\ren)$ is radial and
radially nonincreasing. Then there exists a classical and radial
 solution $u\in C^\infty_\textup{b}(\ren\times[0,\infty))$ of
  \eqref{eq:EQ}--\eqref{eq:BC}. 
   Moreover, if $U_0(r):=u_0(|x|)$ and $U_0(-r):=U_0(r)$ for $r=|x|\geq 0$, then
\[
u(x,t)= (P_{s}(\cdot,t)\ast U_0)(r)= \int_{-\infty}^\infty P_{s}(r-s,t) U_0(s)\dd s \qquad \textup{for all $|x|=r$},
\]
where  $P_{s}$ is the fundamental solution of the one-dimensional fractional heat equation (cf. \eqref{fht.profile}).
\end{theorem}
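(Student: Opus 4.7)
The plan is to reduce the problem to the one-dimensional fractional heat equation via the reduction Theorem \ref{prop:radialOp}, and then lift the explicit 1D solution back to $\ren$. Concretely, I would first define $U_0$ on $\R$ by even reflection, $U_0(r):=u_0(r e_1)$ for $r\geq 0$ and $U_0(-r):=U_0(r)$, so that $U_0\in C^\infty_\textup{b}(\R)$ (the evenness combined with the smoothness of $u_0$ at the origin guarantees that all odd derivatives of $U_0$ vanish at $r=0$, so no singularity is introduced by the reflection). Then set
\[
U(r,t):=(P_s(\cdot,t)\ast U_0)(r),\qquad u(x,t):=U(|x|,t).
\]
Standard properties of the 1D fractional heat semigroup give $U\in C^\infty_\textup{b}(\R\times[0,\infty))$, $\partial_t U=-\flx U$ in the classical sense, and $\|U(\cdot,t)\|_{C_\textup{b}}\leq \|u_0\|_{C_\textup{b}}$.

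The next step is to verify that $U(\cdot,t)$ remains even and nonincreasing on $[0,\infty)$ for every $t>0$, so that $u(x,t)=U(|x|,t)$ is an admissible test function for Theorem \ref{prop:radialOp}. Evenness follows immediately since $P_s(\cdot,t)$ is even. For monotonicity, I would use a layer-cake decomposition $U_0(r)=\int_0^{\|U_0\|_\infty}\indik_{\{|r|\leq R(\lambda)\}}\dd\lambda$ (possible because $U_0$ is even and radially nonincreasing), and observe that for each fixed $R>0$,
\[
f_R(r):=(P_s(\cdot,t)\ast\indik_{[-R,R]})(r)=\int_{r-R}^{r+R}P_s(u,t)\dd u
\]
is even and nonincreasing on $[0,\infty)$ (since $P_s(\cdot,t)$ is even and radially nonincreasing, $f_R'(r)=P_s(r+R,t)-P_s(r-R,t)\leq 0$ for $r\geq 0$). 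Linearity then gives that $U(\cdot,t)$ inherits the two properties. For smoothness of $u$ on $\ren$ at the origin I would invoke the classical fact that a function $x\mapsto U(|x|,t)$ is $C^\infty_\textup{b}$ if and only if $U(\cdot,t)$ is even and $C^\infty_\textup{b}$, which holds here.

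With these properties in hand, Theorem \ref{prop:radialOp} applies pointwise for every $(x,t)\in\ren\times(0,\infty)$ and yields
\[
\ifl u(x,t)=-\flx U(|x|,t)=\partial_t U(|x|,t)=\partial_t u(x,t),
\]
so $u$ is a classical solution of \eqref{eq:EQ}. The initial condition $u(x,0)=U_0(|x|)=u_0(x)$ is satisfied by construction, and smoothness in $t$ up to $t=0$ follows from the smoothing-and-continuity properties of the one-dimensional fractional heat semigroup applied to $u_0\in C_\textup{b}^\infty$.

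The main obstacle I expect is the propagation of the radial monotonicity under the semigroup, together with joint smoothness at the axis $|x|=0$; both are needed simultaneously so that Theorem \ref{prop:radialOp} can be applied as a pointwise identity. The layer-cake argument above handles the monotonicity cleanly, while the evenness of $U_0$ together with the even fundamental solution $P_s$ handles the smoothness through the radial change of variables. Everything else in the proof is a matter of matching the reduction identity to the explicit convolution formula and invoking standard regularity for the 1D fractional heat equation.
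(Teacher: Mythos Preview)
Your proposal is correct and follows essentially the same route as the paper: define the even 1D profile, solve the one-dimensional fractional heat equation by convolution with $P_s$, verify that the solution stays even and radially nonincreasing, and then invoke Proposition~\ref{prop:radialOp} to lift back to $\ren$. The only difference is packaging: the paper outsources the regularity and monotonicity-preservation of the 1D solution to Lemma~\ref{lem:FHEQradialsol}, whereas you supply a direct layer-cake argument for the monotonicity (note this requires $U_0\geq 0$, but a harmless additive constant fixes that) and spell out the smoothness at $|x|=0$.
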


\begin{remark}
\begin{enumerate}[(a)]
\item The idea in the above result is that, for radially nonincreasing radial functions, the
  operators $-\Delta_{\infty,\R^n}^s$ %in any dimension
  and $(-\Delta)^s_{\R^1}$ %in dimension $n=1$
  coincide (Proposition \ref{prop:radialOp}), and \eqref{eq:EQ} then reduces to the one-dimensional
  fractional heat equation. % in dimension $n=1$.
%  We provide such a result in . Note that the fractional heat equation has classical solutions even for merely bounded initial data \cite{DrGaVo03,BaPeSoVa14,BoSiVa17}.
\item In view of Theorem \ref{thm:comparisonsmooth}, this classical solution is also a viscosity solution in our sense.
\end{enumerate}
\end{remark}

Another class of classical solutions are:

\begin{theorem}[Existence of  classical solutions with one-dimensional profiles]\label{thm:ExistenceClassicalSolution2}
Assume that $U_0\in C^\infty_{\textup{b}}(\R)$ is nondecreasing, and let $u_0\in C^\infty_{\textup{b}}(\ren)$ be defined as
\[
u_0(x):=U_0(x_1).
\] 
Then there exists a classical solution $u\in C^\infty_\textup{b}(\ren\times[0,\infty))$ of
  \eqref{eq:EQ}--\eqref{eq:BC}. 
   Moreover, 
\[
u(x,t)= (P_{s}(\cdot,t)\ast U_0)(x_1)= \int_{-\infty}^\infty P_{s}(x_1-s,t) U_0(s)\dd s,
\]
where  $P_{s}$ is the fundamental solution of the one-dimensional fractional heat equation (cf. \eqref{fht.profile}).
\end{theorem}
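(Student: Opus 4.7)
The plan is to construct the solution explicitly via one-dimensional convolution and then reduce the PDE to the classical one-dimensional fractional heat equation, following the same strategy as in Theorem \ref{thm:ExistenceClassicalSolution} but replacing the radial reduction (Proposition \ref{prop:radialOp}) with the nondecreasing one-dimensional profile reduction of Lemma \ref{lem:OtherExamplesSmoothSolutions}.

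First, I set $V(x_1,t) := (P_s(\cdot,t)\ast U_0)(x_1)$ and define $u(x,t) := V(x_1,t)$. Since $U_0\in C^\infty_{\textup{b}}(\R)$, standard properties of the one-dimensional fractional heat semigroup yield $V\in C^\infty_{\textup{b}}(\R\times[0,\infty))$ with $V(\cdot,0) = U_0$ and $\partial_t V = -\flx V$ classically on $\R\times[0,\infty)$. This immediately gives $u\in C^\infty_{\textup{b}}(\ren\times[0,\infty))$ with $u(\cdot,0) = u_0$, so existence, smoothness, and the initial condition are handled for free.

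Next, because $P_s(\cdot,t)\ge 0$, convolution preserves the monotonicity of $U_0$, so $V(\cdot,t)$ is nondecreasing on $\R$ for every $t\ge 0$. Consequently, for each fixed $t\ge 0$, the function $x\mapsto u(x,t) = V(x_1,t)$ is a smooth bounded nondecreasing one-dimensional profile to which Lemma \ref{lem:OtherExamplesSmoothSolutions} applies, giving
\[
\ifl u(x,t) = -\flx V(x_1,t) \qquad \text{for all } (x,t)\in\ren\times[0,\infty).
\]
Combining with the equation satisfied by $V$ then yields $\partial_t u(x,t) = \partial_t V(x_1,t) = -\flx V(x_1,t) = \ifl u(x,t)$, so $u$ is a classical solution of \eqref{eq:EQ}--\eqref{eq:BC} and obeys the claimed representation formula.

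The only delicate point is the pointwise reduction $\ifl u = -\flx V$, which is precisely the content of Lemma \ref{lem:OtherExamplesSmoothSolutions}: because $\partial_{x_1} V$ may vanish at points where $U_0$ has flat regions, the ``gradient nonzero'' branch of Lemma \ref{lem:EquivalentDefIFL} does not apply everywhere. One must instead check directly from the $\nabla\phi=0$ formula that the monotonicity of $V$ in $x_1$ forces both the $\sup$ and the $\inf$ over $|y|=1$ to be attained at $y = e_1$, which produces exactly the symmetric integrand $V(x_1+\eta,t) + V(x_1-\eta,t) - 2V(x_1,t)$ and hence $-\flx V(x_1,t)$. Once this reduction is granted, the rest of the argument is routine manipulation of the one-dimensional fractional heat kernel $P_s$.
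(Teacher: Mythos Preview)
Your proof is correct and follows exactly the approach the paper itself indicates: mimic the proof of Theorem \ref{thm:ExistenceClassicalSolution}, replacing Proposition \ref{prop:radialOp} by Lemma \ref{lem:OtherExamplesSmoothSolutions}, with the key observation that $P_s(\cdot,t)\ast U_0$ remains nondecreasing since $P_s\ge 0$. Your final paragraph correctly identifies the only nontrivial verification (the zero-gradient case in Lemma \ref{lem:OtherExamplesSmoothSolutions}) and resolves it the same way the paper does.
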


The proof is similar to the one of Theorem \ref{thm:ExistenceClassicalSolution}, and we will omit it. One just needs to note that $P_{s}(\cdot,t)\ast U_0$ is nondecreasing.

\subsection{Asymptotic behavior and Harnack inequality}
Having established Theorems \ref{thm:comparisonsmooth} and \ref{thm:ExistenceClassicalSolution}, we can prove that solutions of \eqref{eq:EQ}--\eqref{eq:BC} behave like solutions of the one-dimensional fractional heat equation, up to suitable constants. In Section \ref{sec:heat}, we recall some results on that equation and its fundamental solution denoted by $P_s$. In Section \ref{sec:GHP}, we prove the following result.

\begin{theorem}[Global Harnack principle]\label{thm:GlobalHarnackPrinciple}
Let $u\in BUC(\ren\times[0,\infty))$ be a viscosity solution of \eqref{eq:EQ}--\eqref{eq:BC}, as constructed in Theorem \ref{thm:ExistenceAPrioriViscosity}, with initial data $u_0\in BUC(\ren)$ such that  $u_0\not\equiv 0$ and
\[
0\leq u_0(x)\leq (1+|x|^2)^{-\frac{1+2s}{2}} \qquad \text{for all $|x|\geq R\geq 1$.}
\]
Then, for all $\tau>0$, there exist constants $C_1,C_2>0$ depending only on $s$, $R$, and
$u_0$, such that
\begin{equation*}%\label{E:cauchy-bounds}
C_1 P_s(|x|,t) \leq u(x,t) \leq C_2 P_s(|x|,t)\qquad\text{for all $(x,t)\in\ren\times[\tau,\infty)$.}
\end{equation*}
Moreover, for all $\tau>0$, there exist constants $\tilde{C}_1,\tilde{C}_2>0$ depending only on $s$, $R$, and $u_0$, such that 
\[
\tilde{C}_1\frac{t}{(t^{\frac{1}{s}}+|x|^2)^{\frac{1+2s}{2}}} \leq u(x,t) \leq \tilde{C}_2\frac{t}{(t^{\frac{1}{s}}+|x|^2)^{\frac{1+2s}{2}}}\qquad\text{for all $(x,t)\in\ren\times[\tau,\infty)$.}
\]
\normalcolor In particular, $u>0$ in $\ren\times[\tau,\infty)$.
\end{theorem}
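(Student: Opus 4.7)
The plan is to sandwich the viscosity solution $u$ between two classical radial solutions supplied by Theorem \ref{thm:ExistenceClassicalSolution}, and then transfer the two-sided bounds on $P_s$ recalled in Section \ref{sec:heat} through the comparison Theorem \ref{thm:comparisonsmooth}.

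For the upper bound, I construct a smooth, radial, radially nonincreasing majorant $\overline{u}_0\in C^\infty_\textup{b}(\ren)$ of $u_0$ with a matching tail: for instance $\overline{u}_0(x)=A\,\eta(|x|)$ where $\eta$ is smooth and nonincreasing with $\eta(r)=1$ for $r\le R$, $\eta(r)=(1+r^2)^{-(1+2s)/2}$ for $r\ge 2R$, and $A=\|u_0\|_{C_\textup{b}}+1$. By Theorem \ref{thm:ExistenceClassicalSolution} the associated classical solution is $\overline{u}(x,t)=(P_s(\cdot,t)\ast\overline{U}_0)(|x|)$ with $\overline{U}_0$ the even extension of the radial profile, and Theorem \ref{thm:comparisonsmooth} gives $u\le\overline{u}$. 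It then remains to prove the convolution bound $(P_s(\cdot,t)\ast\overline{U}_0)(r)\le C_2\, P_s(r,t)$ on $[0,\infty)\times[\tau,\infty)$; this I would do by splitting the integration into $|y|\le r/2$ and $|y|>r/2$ and using the two-sided asymptotics $P_s(r,t)\asymp t(t^{1/s}+r^2)^{-(1+2s)/2}$ together with the fact that $\overline{U}_0$ decays at the same rate as $P_s(\cdot,1)$.

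For the lower bound, continuity of $u_0$ together with $u_0\ge 0$, $u_0\not\equiv 0$ yields some $x_0\in\ren$ and $\alpha,r_0>0$ with $u_0\ge\alpha$ on $B_{r_0}(x_0)$. I pick a radial, radially nonincreasing $\phi\in C^\infty_\textup{c}(B_{r_0})$ with $0\le\phi\le 1$ and set $\underline{u}_0(x):=\alpha\phi(x-x_0)$, so that $\underline{u}_0\le u_0$. Translation invariance of \eqref{eq:EQ} combined with Theorem \ref{thm:ExistenceClassicalSolution} produces a classical solution $\underline{u}(x,t)=\alpha(P_s(\cdot,t)\ast\Phi)(|x-x_0|)$, where $\Phi$ is the even extension of the radial profile of $\phi$, and Theorem \ref{thm:comparisonsmooth} gives $u\ge\underline{u}$. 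Because $\Phi$ is compactly supported, the pointwise estimate $P_s(r-y,t)\gtrsim P_s(r,t)$ for $|y|\le r_0$ and $t\ge\tau$, a direct consequence of the two-sided asymptotics, yields $\underline{u}(x,t)\gtrsim P_s(|x-x_0|,t)$, and a second application of those asymptotics gives $P_s(|x-x_0|,t)\gtrsim P_s(|x|,t)$ uniformly on $\ren\times[\tau,\infty)$ with an implicit constant depending on $|x_0|$ and $\tau$.

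The second pair of inequalities then follows at once from $P_s(r,t)\asymp t(t^{1/s}+r^2)^{-(1+2s)/2}$ applied to the first, and the strict positivity $u>0$ on $\ren\times[\tau,\infty)$ is a direct consequence of the lower bound. The main obstacle I anticipate is the upper-bound convolution estimate $(P_s(\cdot,t)\ast\overline{U}_0)(r)\le C\, P_s(r,t)$: the near-field contribution $|y|\le r/2$ is controlled by $\|P_s(\cdot,t)\|_{L^1}=1$ times the value of $P_s$ at a radius comparable to $r$, while in the far field it is essential that the tail decay of $\overline{U}_0$ matches that of $P_s(\cdot,1)$ exactly, so that the hypothesis $u_0(x)\le (1+|x|^2)^{-(1+2s)/2}$ is sharp in this respect.
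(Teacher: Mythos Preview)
Your strategy coincides with the paper's: sandwich $u$ between two classical radial solutions coming from Theorem~\ref{thm:ExistenceClassicalSolution} and then use Theorem~\ref{thm:comparisonsmooth}. The one substantive difference is in how the two-sided control on the radial barriers is obtained. The paper does not prove the convolution estimates $(P_s(\cdot,t)\ast\overline U_0)(r)\lesssim P_s(r,t)$ and $(P_s(\cdot,t)\ast\Phi)(r)\gtrsim P_s(r,t)$ by hand; instead it invokes Lemma~\ref{lem:finetailHE} (the Global Harnack principle for the one-dimensional fractional heat equation, i.e.\ Theorem~8.1 of \cite{BoSiVa17}) directly on the one-dimensional profiles $\overline v_0$ and $\underline v_0$. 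This immediately gives both inequalities with constants depending on $\|\overline v_0\|_{L^1(\R)}$, $\|\underline v_0\|_{L^1(\R)}$, $s$ and $R$, and so bypasses the splitting-and-asymptotics computation you flag as the ``main obstacle''. Your direct route would work (it is essentially a re-derivation of the relevant piece of Lemma~\ref{lem:finetailHE}), but it is more labour and less transparent.

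A minor point on the lower bound: you translate by $x_0$ and argue via translation invariance of \eqref{eq:EQ}, which is fine since $\underline u(\cdot,0)=\underline u_0\le u_0$ makes $\underline u$ a classical subsolution of \eqref{eq:EQ}--\eqref{eq:BC} and Theorem~\ref{thm:comparisonsmooth} applies. The paper instead normalises to $x_0=0$ ``without loss of generality''; either way one picks up the extra dependence on $x_0$ (hence on $u_0$) when comparing $P_s(|x-x_0|,t)$ with $P_s(|x|,t)$, exactly as you note.
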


\begin{remark}
\begin{enumerate}[(a)]
\item Note that $u_0$ is not necessarily in $L^1(\ren)$ since the decay required for large $x$ is the one corresponding to the one-dimensional fractional heat kernel $P_s$.
\item The above theorem provides a counterexample to conservation of
  mass for \eqref{eq:EQ}--\eqref{eq:BC}: For any smooth compactly supported $0\leq u_0\in L^1(\ren)$, the corresponding solution $u$ satisfies
\[
\int_{\ren} u(x,1) \dd x \geq \tilde{C}_1 \int_{\ren}\frac{1}{(1+|x|^2)^{\frac{1+2s}{2}}}  \dd x.
\]
The last integral is infinite if $1+2s\leq n$, 
  and hence there is no conservation of mass for $n \geq 3$.
\item In Theorem \ref{thm:ExistenceClassicalSolution2}, we construct other types of special solutions which could also be used to prove the global Harnack principle.
\end{enumerate}
\end{remark}

\section{Properties of a approximation scheme}\label{sec:scheme}

We will now start the detailed development of the theory. The basic idea we follow is to discretize explicitly in time and use the asymptotic expansion of $\ifl$ found in \cite{DTEnLe22} to provide a monotone zero-order approximation of the operator.

We recall that, for $s\in(1/2,1)$,
\begin{equation}\label{eq:defLeps}
\begin{split}
\ifleps[\phi](x)&:=C_s \frac{1}{2s \veps^{2s}}\left(\sup_{|y|=1}\fint_\veps^\infty \phi(x+\eta y)  \frac{\dd \eta }{\eta^{1+2s}}  + \inf_{|y|=1}\fint_\veps^\infty \phi(x+\eta y)  \frac{\dd \eta }{\eta^{1+2s}}-2\phi(x)\right)\\
&= C_s \left(\sup_{|y|=1}\int_\veps^\infty \phi(x+  \eta y)  \frac{\dd \eta }{\eta ^{1+2s}}  + \inf_{|y|=1}\int_\veps^\infty \phi(x+\eta y)  \frac{\dd \eta }{\eta ^{1+2s}}-\frac{1}{s\veps^{2s}}\phi(x)\right)\\
&=C_s \sup_{|y|=1}\inf_{|\tilde{y}|=1}\int_\veps^\infty\big(\phi(x+\eta y)+\phi(x-\eta \tilde{y})-2\phi(x)\big)\frac{\dd \eta }{\eta ^{1+2s}}.
\end{split}
\end{equation}

\begin{lemma}\label{lem:selfmap}
The operator $\ifleps:C_{\textup{b}}(\ren)\to C_{\textup{b}}(\ren)$ is well-defined
and bounded. 
%\ifleps:C_\textup{b}(\ren)\to C_\textup{b}(\ren)
\end{lemma}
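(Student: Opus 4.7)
Boundedness is immediate. For $\phi\in C_\textup{b}(\ren)$, the symmetrized increment satisfies
$$|\phi(x+\eta y)+\phi(x-\eta\tilde y)-2\phi(x)|\leq 4\|\phi\|_{C_\textup{b}(\ren)}$$
uniformly in $x,\eta,y,\tilde y$, and $\int_\veps^\infty \eta^{-1-2s}\dd{\eta}=(2s\veps^{2s})^{-1}$. Hence the integrand appearing inside the $\sup_{|y|=1}\inf_{|\tilde y|=1}$ is bounded in absolute value by $\frac{2\|\phi\|_{C_\textup{b}}}{s\veps^{2s}}$ independently of $y,\tilde y$ and $x$. This bound passes through the sup/inf to give $\|\ifleps[\phi]\|_{C_\textup{b}(\ren)}\leq \frac{2C_s}{s\veps^{2s}}\|\phi\|_{C_\textup{b}(\ren)}$, which also shows that $\ifleps[\phi]$ is a pointwise well-defined real number at every $x$.

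To establish continuity of $x\mapsto \ifleps[\phi](x)$, I would introduce
$$F(x,y,\tilde y):=\int_\veps^\infty\bigl(\phi(x+\eta y)+\phi(x-\eta\tilde y)-2\phi(x)\bigr)\frac{\dd{\eta}}{\eta^{1+2s}},$$
and prove that $F(\cdot,y,\tilde y)$ is continuous at every $x_0\in\ren$ uniformly in $(y,\tilde y)$ with $|y|=|\tilde y|=1$. The transfer to $\ifleps[\phi]=C_s\sup_{|y|=1}\inf_{|\tilde y|=1}F$ is then automatic from the elementary bounds $|\sup_y A(y)-\sup_y B(y)|\leq \sup_y|A(y)-B(y)|$ and the analog for infima, applied twice.

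For the uniform-in-$(y,\tilde y)$ continuity of $F$ at a fixed $x_0$, I would split the $\eta$-integral at a large threshold $R>1$. The tail over $[R,\infty)$ is controlled by $\frac{8\|\phi\|_{C_\textup{b}}}{sR^{2s}}$ uniformly in $x,y,\tilde y$ and made arbitrarily small by choosing $R$ large. On the truncated range $\eta\in[\veps,R]$ and $x\in\overline{B_1(x_0)}$, the arguments $x\pm\eta y$ and $x\pm\eta\tilde y$ all lie in the compact set $\overline{B_{R+1}(x_0)}$, on which the continuous function $\phi$ is uniformly continuous. Therefore, for $|x-x_0|<\delta$, the truncated integrand is bounded by a quantity $\omega_R(\delta)\to 0$ as $\delta\to 0$, uniformly in $(y,\tilde y)$; combining the two estimates yields the required continuity.

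The only mildly delicate point is this last step: since $\phi\in C_\textup{b}(\ren)$ is only continuous and not uniformly continuous, the difference $|\phi(x+\eta y)-\phi(x_0+\eta y)|$ cannot be controlled uniformly for all $\eta\in[\veps,\infty)$, which is precisely why the preliminary truncation at $\eta=R$ is essential. Once that is in place, the rest of the argument is routine.
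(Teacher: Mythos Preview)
Your proof is correct and follows essentially the same approach as the paper: both obtain boundedness from the crude bound $|\phi(x+\eta y)+\phi(x-\eta\tilde y)-2\phi(x)|\leq 4\|\phi\|_{C_\textup{b}}$ together with $\int_\veps^\infty\eta^{-1-2s}\dd\eta=(2s\veps^{2s})^{-1}$, and both prove continuity by truncating the $\eta$-integral at a large $R$, controlling the tail uniformly by the same crude bound, and handling the remaining piece via the local uniform continuity of $\phi$ on the compact set $\overline{B_{|x_0|+1+R}(0)}$, transferring through the $\sup$/$\inf$ via the standard inequalities. The only cosmetic difference is that the paper packages the tail estimate as $\|\mathcal L_R[\phi]\|_{C_\textup{b}}<\eps/4$ rather than writing out the integral explicitly.
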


\begin{remark}
Note that, in general, $\ifl:C_{\textup{b}}^\infty(\ren) \not \to C_{\textup{b}}(\ren)$. See Section \ref{sec:extensions}.
\end{remark}

\begin{proof}[Proof of Lemma \ref{lem:selfmap}]
Let $\phi\in C_\textup{b}(\ren)$. Since $\int_{\veps}^\infty
\eta^{-(1+2s)} \dd t=\frac{1}{2s}\veps^{-2s}$, we have
$\|\mathcal L_{\veps}[\phi]\|_{C_\textup{b}(\ren)}\leq
\frac{4C_s}{2s}\|\phi\|_{C_\textup{b}(\ren)}\veps^{-2s}$ for any
$\veps>0$. It follows that $\ifleps[\phi]$ is bounded. If $\mathcal
L_{\veps}[\phi]$ is continuous, it also follows that $\ifleps$ is a
bounded operator on $C_{\textup{b}}(\ren)$.  To show
continuity at an arbitrary point $x_1\in\R^n$, we fix $\eps>0$. By the above bound there is (large) $R>0$ such that
\begin{align*}
\|\mathcal L_{R}[\phi]\|_{C_\textup{b}(\ren)} < \frac{\eps}{4}.
\end{align*}
For $x_2\in B(x_1,1)$, we find by the triangle inequality and $\sup\inf(\cdots)-\sup\inf(\cdots)\leq
\sup\sup(\cdots-\cdots)$,
\begin{equation*}
\begin{split}
  &\big|\ifleps[\phi](x_1)-\ifleps[\phi](x_2)\big|\\
  &\leq  C_s
\sup_{|\tilde y|=1}\sup_{|y|=1}\int_\veps^{R} \big|\big(\phi(x_1+\eta y)+
\phi(x_1+\eta \tilde y)\big) -  \big(\phi(x_2+\eta y)+\phi(x_2+\eta \tilde y)\big)\big| \frac{\dd \eta }{\eta ^{1+2s}}\\
&\quad+2C_s\int_\veps^{R}\frac{\dd \eta }{\eta ^{1+2s}}|\phi(x_1)-\phi(x_2)|+\frac\eps4 +\frac\eps4 \\
&\leq 2C_s \omega_{\phi,R}(x_1-x_2)\int_\veps^{R}\frac{\dd \eta }{\eta ^{1+2s}}+ \frac\eps2,
\end{split}
\end{equation*}
where $\omega_{\phi,R}$ is the modulus of continuity of $\phi$ in the
ball $B(0,|x_1|+1+R)$. Since the integral is finite,  the last expression
is less than $\veps$ when $|x_2- x_1|$ is small enough and continuity of $\ifleps[\phi]$
follows.
%it immediatly follows that there is $\delta\in(0,1)$, such that $|x_1-x_2|<\delta$ implies $|\ifleps[\phi](x_1)-\ifleps[\phi](x_2)\big|<\eps$ and continuity at $x_1$ follows.
  \end{proof}
 To state the consistency, we introduce admissible test functions
$\phi$: There is
$\eta_x>0$, such that
$$\textup{(i)} \quad \phi\in C^2(\bar B_{\eta_x})\qquad\text{and}\qquad \textup{(ii)} \quad \phi\in B(\ren)\cap UC(\ren\setminus \bar B_{\eta_x}).$$
\begin{lemma}[Consistency, Theorem 1.1 in \cite{DTEnLe22}]\label{lem:Consistency}
Under the above assumptions on $\phi$, for every $\veps< \eta_x$,
\begin{equation*}
\Big|\ifleps[\phi](x)-\ifl\phi(x) \Big|=o_\veps(1),
\end{equation*}
where the bound $o_\veps(1)$ depends only on 
$|\nabla\phi(x)|^{-1}$, $\|D^2\phi\|_{C_{\textup{b}}(\bar B_{\eta_x})}$, and
$\omega_{\phi,\bar B_{\eta_x}^c}$.
\end{lemma}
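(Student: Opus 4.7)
The plan is to exploit the fact that $\ifleps$ decouples while $\ifl$ does not. Using the middle form of \eqref{eq:defLeps} and the substitution $z:=-\tilde y$ in the inf,
\[
\ifleps[\phi](x)=C_s\sup_{|y|=1}g_\veps(y)+C_s\inf_{|z|=1}g_\veps(z),\qquad g_\veps(v):=\int_\veps^\infty\big(\phi(x+\eta v)-\phi(x)\big)\,\frac{\dd\eta}{\eta^{1+2s}}.
\]
When $\nabla\phi(x)=0$, first-order Taylor at $x$ vanishes, so the integrand of $g_\veps(v)-g_0(v)$ is $O(\|D^2\phi\|\eta^2)$ on $(0,\eta_x)$, and $s<1$ makes the $\eta^{1-2s}$-tail integrable, giving $|g_\veps(v)-g_0(v)|\leq c(s)\|D^2\phi\|\veps^{2-2s}$ uniformly in $v$. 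Sup and inf pass to the limit, and the second bullet of Lemma \ref{lem:EquivalentDefIFL} identifies the limit with $\ifl\phi(x)$.

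For the main case $\nabla\phi(x)\neq 0$, I set $\zeta:=\nabla\phi(x)/|\nabla\phi(x)|$ and split
\[
g_\veps(y)=(\nabla\phi(x)\cdot y)\,J_\veps+h_\veps(y),\qquad J_\veps:=\int_\veps^{\eta_x}\eta^{-2s}\dd\eta,
\]
where $h_\veps$ collects the second-order Taylor remainder on $(\veps,\eta_x)$ and the tail on $(\eta_x,\infty)$. A central technical step is showing that $h_\veps$ is uniformly bounded in $\veps$ and admits an $\veps$-independent modulus $|h_\veps(y)-h_\veps(v)|\leq c\|D^2\phi\|_{C_\textup{b}(\bar B_{\eta_x})}|y-v|+\omega_*(|y-v|)$: in the near field, writing the difference as $\eta(y-v)\cdot\int_0^1\big(\nabla\phi(x+\eta v+t\eta(y-v))-\nabla\phi(x)\big)\dd t$ yields an integrand of order $\eta^2|y-v|$ which integrates (thanks to $s<1$) to a finite Lipschitz bound; in the tail, a two-scale split $\int_{\eta_x}^R+\int_R^\infty$ with $R\sim|y-v|^{-1/2}$ converts $\omega_{\phi,\bar B_{\eta_x}^c}$ into a modulus $\omega_*$ in $|y-v|$.

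The hypothesis $s>1/2$ forces $J_\veps\to+\infty$ as $\veps\to 0^+$, so the linear term pins the optimizer: for any near-maximizer $y_\veps^*$ of $g_\veps$, comparing with $\zeta$ gives
\[
|\nabla\phi(x)|(1-y_\veps^*\cdot\zeta)\,J_\veps\leq h_\veps(y_\veps^*)-h_\veps(\zeta),
\]
whence $|y_\veps^*-\zeta|\to 0$ at rate controlled by $1/(|\nabla\phi(x)|J_\veps)$ together with the modulus of $h_\veps$, and so $\sup g_\veps=g_\veps(\zeta)+o_\veps(1)$; a symmetric argument gives $\inf g_\veps=g_\veps(-\zeta)+o_\veps(1)$. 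Substituting,
\[
\ifleps[\phi](x)=C_s\int_\veps^\infty\big[\phi(x+\eta\zeta)+\phi(x-\eta\zeta)-2\phi(x)\big]\,\frac{\dd\eta}{\eta^{1+2s}}+o_\veps(1),
\]
and the first bullet of Lemma \ref{lem:EquivalentDefIFL} gives the same expression for $\ifl\phi(x)$ but with the integral starting at $0$. The missing piece on $(0,\veps)$ has integrand $O(\|D^2\phi\|\eta^2)$, since the first-order terms cancel by the $\eta\leftrightarrow-\eta$ symmetry of the integrand, contributing $O(\|D^2\phi\|\veps^{2-2s})$. This matches the stated dependence of $o_\veps(1)$: $|\nabla\phi(x)|^{-1}$ enters through the concentration rate, $\|D^2\phi\|_{C_\textup{b}(\bar B_{\eta_x})}$ through the near-field Taylor, and $\omega_{\phi,\bar B_{\eta_x}^c}$ through $\omega_*$.

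The main obstacle I foresee is precisely the $\veps$-independent modulus for $h_\veps$. A naive mean-value estimate on the near-field piece produces a Lipschitz constant proportional to $J_\veps$, which blows up as $\veps\to 0^+$ and defeats the first-order gain; extracting the genuine second-order cancellation (so the integrand is $O(\eta^2|y-v|)$ rather than $O(\eta|y-v|)$) is what keeps the near-field error $\veps$-independent, while the two-scale splitting in the tail is the nonobvious ingredient that lifts the abstract modulus $\omega_{\phi,\bar B_{\eta_x}^c}$ to a modulus in $|y-v|$ and allows the concentration argument to close.
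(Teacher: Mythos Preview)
The paper does not prove this lemma; it is imported as Theorem~1.1 of \cite{DTEnLe22} and used as a black box, so there is no in-paper argument to compare against. Your plan is correct and is essentially the natural route: in the zero-gradient case the uniform convergence $g_\veps\to g_0$ on the sphere follows directly from the second-order Taylor bound and $s<1$; in the nonzero-gradient case the divergence of $J_\veps$ (this is where $s>\tfrac12$ enters) together with the $\veps$-independent modulus for $h_\veps$---Lipschitz near-field via the second-order cancellation you isolate, modulus-type in the tail via the two-scale split---pins the near-optimizers at $\pm\zeta$ and gives $\sup g_\veps=g_\veps(\zeta)+o_\veps(1)$, $\inf g_\veps=g_\veps(-\zeta)+o_\veps(1)$, after which the symmetric second-difference on $(0,\veps)$ contributes $O(\|D^2\phi\|\veps^{2-2s})$. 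The stated dependence of the $o_\veps(1)$ bound on $|\nabla\phi(x)|^{-1}$, $\|D^2\phi\|_{C_\textup{b}(\bar B_{\eta_x})}$, and $\omega_{\phi,\bar B_{\eta_x}^c}$ is exactly what your argument produces.
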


%% \begin{lemma}[Uniform bound, Lemma 2.1 in \cite{DTEnLe22}
%%   ]\label{lem:unifBoundLveps}
%% Under the above assumptions on $\phi$, we have that $\{\ifleps[\phi](x)\}_{\veps>0}$ is bounded independently of $\veps$. More precisely, for all $\veps<\eta_x$ there holds:
%% \begin{equation*}
%% |\ifleps[\phi](x)| \leq \frac{C_s}{(1-s)}C_x\eta_x^{2-2s}+\frac{2C_s}{s}\|\phi\|_{C_\textup{b}(\ren)}\eta_x^{-2s}.
%% \end{equation*}
%% \end{lemma}

We also need $\veps$ independent bounds to send $\veps\to0$.
\begin{lemma}[Uniform bound]\label{lem:unifBoundLveps}
  If $\phi\in C_{\textup{b}}^2(\ren)$, then there is a constant
  $c(s)$ only depending on $s$ such that
\begin{equation*}
|\ifleps[\phi](x)| \leq c(s)\|\nabla\phi\|_{ C_{\textup{b}}(\ren)}^{2-2s}\|D^2\phi\|_{ C_{\textup{b}}(\ren)}^{2s-1}.
\end{equation*}
\end{lemma}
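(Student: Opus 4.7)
The strategy is to exploit the separated structure of the sup-inf in \eqref{eq:defLeps} to reduce $|\ifleps[\phi](x)|$ to the size of a symmetric one-dimensional second difference, which is then controlled by a standard Taylor/gradient split of the $\eta$-integral. For a unit vector $z$, define
\[
J(z)\defeq \int_\veps^\infty \big(\phi(x+\eta z)-\phi(x)\big)\,\frac{\dd \eta}{\eta^{1+2s}}.
\]
Since $|\phi(x+\eta z)-\phi(x)|\leq \min(\eta\|\nabla\phi\|_{C_\textup{b}(\ren)},2\|\phi\|_{C_\textup{b}(\ren)})$, the integrand is dominated by an integrable function on $(\veps,\infty)$, so $J$ is well-defined and, by dominated convergence, continuous on the (compact) unit sphere. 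Splitting the integrand of \eqref{eq:defLeps} as $(\phi(x+\eta y)-\phi(x))+(\phi(x-\eta\tilde y)-\phi(x))$ and using that the change of variable $\tilde y\mapsto -\tilde y$ is a symmetry of the sphere (so the second term becomes $J(-\tilde y)$), the sup-inf separates as
\[
\ifleps[\phi](x)=C_s\Big(\sup_{|y|=1} J(y)+\inf_{|y|=1} J(y)\Big).
\]

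Let $y^\pm$ realize the sup and inf, which exist by compactness and continuity. Using the minimality of $y^-$, we have $J(y^-)\leq J(-y^+)$, and therefore
\[
\sup_{|y|=1} J(y)+\inf_{|y|=1} J(y)\;=\;J(y^+)+J(y^-)\;\leq\;J(y^+)+J(-y^+)\;=\;I_{\textup{sym}}(y^+),
\]
where
\[
I_{\textup{sym}}(y)\defeq \int_\veps^\infty \big(\phi(x+\eta y)+\phi(x-\eta y)-2\phi(x)\big)\,\frac{\dd \eta}{\eta^{1+2s}}.
\]
The symmetric inequality $\sup J+\inf J \geq I_{\textup{sym}}(y^-)$ follows analogously from $J(y^+)\geq J(-y^-)$. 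Consequently,
\[
|\ifleps[\phi](x)|\leq C_s\sup_{|y|=1} |I_{\textup{sym}}(y)|.
\]

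It remains to bound $|I_{\textup{sym}}(y)|$ uniformly in $|y|=1$ and $\veps>0$. A second-order Taylor expansion yields $|\phi(x+\eta y)+\phi(x-\eta y)-2\phi(x)|\leq \eta^2\|D^2\phi\|_{C_\textup{b}(\ren)}$, while the mean value theorem gives the linear bound $2\eta\|\nabla\phi\|_{C_\textup{b}(\ren)}$. Setting $\delta\defeq 2\|\nabla\phi\|_{C_\textup{b}(\ren)}/\|D^2\phi\|_{C_\textup{b}(\ren)}$, if $\veps<\delta$ I apply the quadratic bound on $(\veps,\delta)$ and the linear bound on $(\delta,\infty)$; both resulting integrals evaluate to a constant times $\|\nabla\phi\|_{C_\textup{b}(\ren)}^{2-2s}\|D^2\phi\|_{C_\textup{b}(\ren)}^{2s-1}$, with an $s$-dependent prefactor involving $\tfrac{1}{2-2s}$ and $\tfrac{1}{2s-1}$, both finite precisely because $s\in(1/2,1)$. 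If $\veps\geq\delta$, only the linear tail contributes, and $\veps^{1-2s}\leq \delta^{1-2s}$ (since $1-2s<0$) produces the same estimate. The claim follows.

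The main obstacle is the separation step. A direct individual bound on $|I(y,\tilde y)|$ would necessarily invoke $\|\phi\|_{C_\textup{b}(\ren)}$ with a $\veps$-dependent prefactor, of the wrong form. The delicate maneuver is the optimality trick $J(y^-)\leq J(-y^+)$, which converts the unsigned combination $J(y^+)+J(y^-)$ into the symmetric second difference $I_{\textup{sym}}$ estimable purely in terms of $\|\nabla\phi\|_{C_\textup{b}(\ren)}$ and $\|D^2\phi\|_{C_\textup{b}(\ren)}$. Once this reduction is in place, the remaining power split of the $\eta$-integral is routine and mirrors classical estimates for the linear fractional Laplacian.
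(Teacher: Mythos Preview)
Your argument is correct and takes a genuinely different route from the paper's. Both proofs start from the separated form $\ifleps[\phi](x)=C_s\big(\sup_{|y|=1}J(y)+\inf_{|y|=1}J(y)\big)$, but diverge at the key step. The paper subtracts a gradient correction: it adds the identically zero quantity $\sup_y\inf_{\tilde y}\int_\veps^1 \eta\,\nabla\phi(x)\cdot(y-\tilde y)\,\eta^{-1-2s}\,\dd\eta$ inside the sup--inf, so that each of the sup and inf pieces separately becomes a Taylor remainder estimable by $\|D^2\phi\|_{C_\textup{b}}\eta^2$ near the origin; it then splits at a free scale $r$ and minimizes over $r$. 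Your optimality maneuver $J(y^-)\leq J(-y^+)$ (and the mirror inequality) instead sandwiches $\sup J+\inf J$ between the symmetric second differences $I_{\textup{sym}}(y^{\pm})$, in which the first-order terms cancel automatically; no gradient subtraction is needed, and the split scale $\delta=2\|\nabla\phi\|_{C_\textup{b}}/\|D^2\phi\|_{C_\textup{b}}$ is exactly the minimizer the paper finds a posteriori. Your route is a bit cleaner and makes transparent why only $\|\nabla\phi\|_{C_\textup{b}}$ and $\|D^2\phi\|_{C_\textup{b}}$ enter; the paper's route has the minor advantage of bounding the sup and inf contributions independently, which can be useful if one later wants one-sided estimates. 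One small point you may wish to add for completeness: the choice of $\delta$ presupposes $\|D^2\phi\|_{C_\textup{b}}>0$; when it vanishes, $\phi$ is bounded and affine, hence constant, and both sides are zero.
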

\begin{proof}
We add to $\ifleps$ the gradient
term
$$
\sup_{|y|=1}\inf_{|\tilde{y}|=1} \int_\veps^{1} tp_x\cdot(y-\tilde{y})  \frac{\dd t}{t^{1+2s}}
= \int_\veps^{1} \eta  \frac{\dd \eta }{\eta ^{1+2s}}\sup_{|y|=1}\inf_{|\tilde{y}|=1}p_x\cdot(y-\tilde{y})=0,
$$
Since the intgrand is bounded, we then split the resulting integral in
two---an integral 
with the $\inf$ and an integral with the $\sup$. The result for the
$\sup$-part is:
\begin{align*}
\sup_{|y|=1}\int_\veps^\infty\big(\phi(x+\eta
y)-\phi(x)-tp_x\cdot y \eta 1_{0<\eta<1}\big)\frac{\dd \eta }{\eta
  ^{1+2s}}.
\end{align*}
Splitting this integral in two, $\int_\veps^r+\int_r^\infty$, and Taylor
expanding, we find the following upper bound
\begin{align*}
 \frac12\|D^2\phi\|_{C_{\textup{b}}}\int_0^r\eta^2 \frac{\dd \eta }{\eta
  ^{1+2s}} + 2\|\nabla\phi\|_{C_{\textup{b}}}\int_r^\infty\eta \frac{\dd \eta }{\eta
    ^{1+2s}}\leq \frac12\|D^2\phi\|_{C_{\textup{b}}}\frac1{2-2s}r^{2-2s}+ 2\|\nabla\phi\|_{C_{\textup{b}}}\frac1{2s-1}r^{1-2s}.
  \end{align*}
Minimizing with respect to $r$ then proves the result for the $\sup$-part.
The $\inf$-part is similar. 
\end{proof}

\begin{remark}\label{remark:schemeform}  Note that $\ifleps$ is monotone in the following two ways:
\begin{enumerate}[{\rm (i)}] 
\item $\ifleps[\phi]\leq 0$ at any global maximum of $\phi$.
\item In the sense of monotone approximations in viscosity solution theory:
\[\psi_1\leq \psi_2\qquad \text{in $\ren$} \qquad\implies\qquad L(\veps,\psi_1,r)\leq
L(\veps,\psi_2,r)\qquad \text{in $\ren$}, \]
where $\ifleps[\psi](x)=L(\veps, \psi, \psi(x))$ and
 $L:\R_+\times BUC(\ren)\times \R$ is given by
\[
L(\veps,\psi,r)= C_s \Big(\sup_{|y|=1}\int_\veps^\infty \psi(x+\eta y)  \frac{\dd \eta }{\eta ^{1+2s}}  + \inf_{|y|=1}\int_\veps^\infty \psi(x+\eta y)  \frac{\dd \eta }{\eta ^{1+2s}}-\frac{1}{s\veps^{2s}}r\Big).
\]
%If $\psi_1\leq \psi_2$ in $\ren$ then $L(\veps,\psi_1,r)\leq L(\veps,\psi_2,r)$.
\end{enumerate}
These properties are crucial in order to obtain approximation schemes that preserves the properties of the limit problem \eqref{eq:EQ}--\eqref{eq:BC}.
\end{remark}

%%%%%%%%%%%%%%%%%%%%%%%%%%%%%%%%%%%%%%%%%%%%%%%%%%%%%%%%%%%%%
\subsection{Semi-discrete scheme defined on $\ren\times\{\tau \N\,\cup\,0\}$}

We will now study the semi-discrete scheme \eqref{eq:EQD}--\eqref{eq:BCD}. 

%Let $\tau>0$ and $t_j:=j\tau$ for $j\in\N$, i.e., $t_j\in \tau\N$. We then consider the semidiscrete problem
%\begin{empheq}[left=\empheqlbrace]{align}
%\frac{U^{j+1}(x)-U^j(x)}{\tau}&=\ifleps[U^j](x), \hspace{-2cm} &x \in \ren& , j\in \N  ,\label{eq:EQD}\\
%U^0 (x) &= u_0(x),  \hspace{-2cm} & x \in \ren& . \label{eq:BCD}
%\end{empheq}

\begin{proposition}[Well-posedness and properties]\label{prop:propscheme}
 Assume $u_0\in BUC(\ren)$ and $\veps,\tau>0$. Then there
exists a unique  solution $U^j\in C_{\textup{b}}(\ren)$ of
\eqref{eq:EQD}--\eqref{eq:BCD}. Moreover, if
\begin{equation}\tag{\textup{CFL}}\label{eq:CFL}
\tau\leq \frac{s}{C_s}\veps^{2s},
\end{equation}
then the following properties hold:
\begin{enumerate}[\rm (a)]
\item\label{prop:propscheme-item1} \textup{($C_\textup{b}$-stability)} $\|U^j\|_{C_\textup{b}(\ren)}\leq \|u_0\|_{C_\textup{b}(\ren)}$.
\item\label{prop:propscheme-item3} \textup{(Comparison principle)}  Let $U^j$ and $V^j$ be sub- and supersolutions of \eqref{eq:EQD}--\eqref{eq:BCD} with respective initial data $u_0\in BUC(\ren)$ and $v_0\in BUC(\ren)$. If $u_0\leq v_0$ in $\ren$, then $U^j\leq V^j$ in $\ren$ for all $j\in \N$.
\item\label{prop:propscheme-item4} \textup{($C_\textup{b}$-contraction)}  Let $U^j$ and $V^j$ be solutions of \eqref{eq:EQD}--\eqref{eq:BCD} with respective initial data $u_0\in BUC(\ren)$ and $v_0\in BUC(\ren)$. Then
\begin{equation*}%\label{eq:Linftycontrac}
\|U^j-V^j\|_{C_\textup{b}(\ren)} \leq \|u_0-v_0\|_{C_\textup{b}(\ren)} \qquad \text{for all $j\in \N$}.
\end{equation*}
\item\label{prop:propscheme-item2} \textup{(Equicontinuity in space)} For all $y\in\ren$ and all $j\in\N$,
\item[]\qquad $\|U^j(\cdot+y)-U^j\|_{C_\textup{b}(\ren)}\leq  \omega_{u_0}(|y|).$
\item\label{prop:propscheme-contTime} \textup{(Equicontinuity in
  time)}  For all $j,k\in\N$ and all $0<\veps<1$, 
\item[]\qquad $\|U^{j+k}-U^j\|_{C_\textup{b}(\ren)}\leq
  \tilde\omega(|t_{j+k}-t_j|)$,\qquad where\qquad $\tilde\omega$ is defined in Theorem
\ref{thm:ExistenceAPrioriViscosity} (c).
\end{enumerate}
\end{proposition}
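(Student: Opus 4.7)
The plan is to view the scheme as the explicit iteration $U^{j+1}=U^j+\tau\ifleps[U^j]$. By Lemma \ref{lem:selfmap}, $\ifleps$ sends $C_\textup{b}(\ren)$ into itself, so induction immediately gives existence and uniqueness of $U^j\in C_\textup{b}(\ren)$ for all $\veps,\tau>0$. Under \eqref{eq:CFL}, the third equality in \eqref{eq:defLeps} combined with $\int_\veps^\infty\eta^{-1-2s}\,d\eta=\tfrac{1}{2s\veps^{2s}}$ allows me to rewrite the iteration as
\[U^{j+1}(x)=\Big(1-\tfrac{\tau C_s}{s\veps^{2s}}\Big)U^j(x)+\tau C_s\sup_{|y|=1}\!\int_\veps^\infty\!U^j(x+\eta y)\tfrac{d\eta}{\eta^{1+2s}}+\tau C_s\inf_{|y|=1}\!\int_\veps^\infty\!U^j(x+\eta y)\tfrac{d\eta}{\eta^{1+2s}},\]
with all three ``weights'' nonnegative and of total mass $1$. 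Since the $\sup$- and $\inf$-integrals are monotone in their argument and $\ifleps$ vanishes on constants, the scheme map $\phi\mapsto\phi+\tau\ifleps[\phi]$ is monotone and commutes with the addition of constants.

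These two structural facts make items (a)--(d) standard. Taking absolute values in the convex combination and using monotonicity yields $|U^{j+1}(x)|\le\|U^j\|_{C_\textup{b}}$, hence (a) by induction. For (b), combining the sub/super inequalities with monotonicity of the scheme map yields $U^{j+1}\le U^j+\tau\ifleps[U^j]\le V^j+\tau\ifleps[V^j]\le V^{j+1}$ whenever $U^j\le V^j$, and induction closes the argument. For (c), the constant-shifted solutions $V^j\pm\|u_0-v_0\|_{C_\textup{b}}$ (which are again scheme solutions, by constant invariance) bracket $U^j$ via (b). Item (d) follows because translation invariance shows $U^j(\cdot+y)$ is the iterate with data $u_0(\cdot+y)$, so (c) gives $\|U^j(\cdot+y)-U^j\|_{C_\textup{b}}\le\|u_0(\cdot+y)-u_0\|_{C_\textup{b}}\le\omega_{u_0}(|y|)$.

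The real obstacle is (e), where one must decouple the $\veps$-dependence from the temporal estimate. The key trick is to mollify: set $u_{0,\delta}:=u_0*\rho_\delta$ and let $\tilde U^j$ be the iterates with data $u_{0,\delta}$, so (c) gives $\|U^j-\tilde U^j\|_{C_\textup{b}}\le\|u_0-u_{0,\delta}\|_{C_\textup{b}}\le\omega_{u_0}(\delta)$. Applying (c) \emph{again} to the pair of initial data $(\tilde U^0,\tilde U^1)$, whose iterates are $(\tilde U^j,\tilde U^{j+1})$, yields the telescoping estimate
\[\|\tilde U^{j+1}-\tilde U^j\|_{C_\textup{b}}\le\|\tilde U^1-\tilde U^0\|_{C_\textup{b}}=\tau\|\ifleps[u_{0,\delta}]\|_{C_\textup{b}}.\]
Summing over $j,\ldots,j+k-1$ and combining via the triangle inequality gives
\[\|U^{j+k}-U^j\|_{C_\textup{b}}\le 2\omega_{u_0}(\delta)+(t_{j+k}-t_j)\sup_{\veps>0}\|\ifleps[u_{0,\delta}]\|_{C_\textup{b}},\]
and taking the infimum over $\delta$ produces $\tilde\omega$ (the harmless factor $2$ is absorbed).

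For the quantitative bound on $\tilde\omega$, I would apply Lemma \ref{lem:unifBoundLveps} to $u_{0,\delta}$ together with the mollifier estimates $\|\nabla u_{0,\delta}\|_{C_\textup{b}}\le\|u_0\|_{C_\textup{b}}\delta^{-1}\|\nabla\rho\|_{L^1}$ and $\|D^2u_{0,\delta}\|_{C_\textup{b}}\le\|u_0\|_{C_\textup{b}}\delta^{-2}\|D^2\rho\|_{L^1}$, giving $\sup_{\veps>0}\|\ifleps[u_{0,\delta}]\|_{C_\textup{b}}\le C\delta^{-2s}$ with $C$ as stated in Theorem \ref{thm:ExistenceAPrioriViscosity}. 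Choosing $\delta=r^{1/3}$ turns the second term into $Cr^{(3-2s)/3}$; since $s\in(1/2,1)$ makes the exponent lie in $(1/3,2/3)$, splitting into $r\le1$ and $r\ge1$ shows $r^{(3-2s)/3}\le r^{1/3}+r$, which is the claimed bound. The naive $\veps^{-2s}$ bound on $\ifleps$ would be useless here, which is precisely why Lemma \ref{lem:unifBoundLveps} is indispensable for closing (e).
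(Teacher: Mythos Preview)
Your proof is correct and follows essentially the same approach as the paper's: existence/uniqueness from Lemma~\ref{lem:selfmap}, comparison via the monotone convex-combination structure under \eqref{eq:CFL}, stability and contraction from comparison (the paper does (a) via constant barriers and (c) by direct estimation, but your monotonicity-plus-constant-invariance packaging is an equivalent standard variant), and the temporal estimate (e) via mollification, the one-step bound $\tau\|\ifleps[u_{0,\delta}]\|_{C_\textup{b}}$, telescoping through (c), and Lemma~\ref{lem:unifBoundLveps}. The extra paragraph on the quantitative form of $\tilde\omega$ goes slightly beyond what Proposition~\ref{prop:propscheme} asserts (the paper establishes this in the proof of Theorem~\ref{thm:ExistenceAPrioriViscosity}), but your argument there is also correct.
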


\begin{proof}
Since \eqref{eq:EQD}--\eqref{eq:BCD} is explicit and $
\ifleps:C_\textup{b}(\ren)\to C_\textup{b}(\ren)
$ is well-defined and bounded by Lemma \ref{lem:selfmap},  existence and
uniqueness follows by construction.

Let us then show the different a priori estimates:

\smallskip
\noindent\eqref{prop:propscheme-item3} Since $u_0\leq v_0$, we have $U^0\leq V^0$. Then, by induction assume that $U^j\leq V^j$. By \eqref{eq:EQD}, we get
\begin{equation*}%\label{eq:comp}
\begin{split}
U^{j+1}(x)-V^{j+1}(x)&\leq U^{j}(x)-V^{j}(x) + \tau \left(\ifleps[U^j](x)-\ifleps[V^j](x)\right)\\
&=\left(U^{j}(x)-V^{j}(x)\right)\left(1-\tau\frac{C_s}{s\veps^{2s}}\right) \\
&\quad+ \tau C_s \left(\sup_{|y|=1}\int_\veps^\infty U^j(x+\eta y) \frac{\dd t}{t^{1+2s}}  - \sup_{|y|=1}\int_\veps^\infty V^j(x+\eta y) \frac{\dd \eta }{\eta ^{1+2s}} \right)\\
&\quad+\tau C_s \left(\inf_{|y|=1}\int_\veps^\infty U^j(x+\eta y) \frac{\dd \eta }{\eta ^{1+2s}}  - \inf_{|y|=1}\int_\veps^\infty V^j(x+\eta y) \frac{\dd \eta }{\eta ^{1+2s}} \right)\\
&\leq 0,
\end{split}
\end{equation*}
where the last inequality follows from the induction hypothesis $U^j\leq V^j$ and \eqref{eq:CFL}.

\smallskip
\noindent\eqref{prop:propscheme-item1} Note that
\[
V^j:=\inf_{x\in \ren} \{u_0(x)\} \quad  \textup{and} \quad W^j:=\sup_{x\in \ren} \{u_0(x)\} \qquad \textup{for all $j\in \N$}
\]
are solutions of \eqref{eq:EQD}--\eqref{eq:BCD}. Since $\inf_{x\in \ren} \{u_0(x)\} \leq u_0 \leq \sup_{x\in \ren} \{u_0(x)\} $, we have  by \eqref{prop:propscheme-item3} that
\[
\inf_{x\in \ren} \{u_0(x)\}=V^j \leq U^j \leq W^j=\sup_{x\in \ren} \{u_0(x)\}.
\]

\smallskip
\noindent\eqref{prop:propscheme-item4} By the proof of \eqref{prop:propscheme-item3} and the fundamental inequalities $|\sup(\cdots)-\sup(\cdots)|\leq \sup(|\cdots-\cdots|)$ and $|\inf(\cdots)-\inf(\cdots)|\leq \sup(|\cdots-\cdots|)$, we can also get that
\begin{equation*}
\begin{split}
&|U^{j+1}(x)-V^{j+1}(x)|\\
&\leq \left|U^{j}(x)-V^{j}(x)\right|\left(1-\tau\frac{C_s}{s\veps^{2s}}\right)+ \tau C_s \left|\sup_{|y|=1}\int_\veps^\infty U^j(x+\eta y) \frac{\dd \eta }{\eta ^{1+2s}}  - \sup_{|y|=1}\int_\veps^\infty V^j(x+\eta y) \frac{\dd \eta }{\eta ^{1+2s}} \right|\\
&\quad+\tau C_s \left|\inf_{|y|=1}\int_\veps^\infty U^j(x+\eta y) \frac{\dd \eta }{\eta ^{1+2s}}  - \inf_{|y|=1}\int_\veps^\infty V^j(x+\eta y) \frac{\dd \eta }{\eta ^{1+2s}} \right|\\
&\leq \left|U^{j}(x)-V^{j}(x)\right|\left(1-\tau\frac{C_s}{s\veps^{2s}}\right) + 2\tau C_s \sup_{|y|=1}\int_\veps^\infty \left|U^j(x+\eta y)- V^j(x+\eta y) \right|\frac{\dd \eta }{\eta ^{1+2s}}\\
&\leq \left\|U^{j}-V^{j}\right\|_{C_\textup{b}(\ren)}\left(1-\tau\frac{C_s}{s\veps^{2s}}\right) + 2 \tau C_s \left\|U^{j}-V^{j}\right\|_{C_\textup{b}(\ren)} \int_\veps^\infty \frac{\dd \eta }{\eta ^{1+2s}}\\
&= \left\|U^{j}-V^{j}\right\|_{C_\textup{b}(\ren)}.
\end{split}
\end{equation*}
In this way we have proved that
\[
\left\|U^{j+1}-V^{j+1}\right\|_{C_\textup{b}(\ren)}\leq \left\|U^{j}-V^{j}\right\|_{C_\textup{b}(\ren)} \qquad \textup{for all $j\in \N$.}
\]
An iteration then concludes the proof.

\smallskip
\noindent\eqref{prop:propscheme-item2} This follows by using the translation invariant properties of \eqref{eq:EQD}--\eqref{eq:BCD} and part \eqref{prop:propscheme-item4}. More precisely, $W^j:=U^j(\cdot+y)$ is the unique solution of \eqref{eq:EQD}--\eqref{eq:BCD} with initial data $w_0:= u_0(\cdot+y)$ for all $y\in \ren$. Part \eqref{prop:propscheme-item4} then yields
\[
\|U^j(\cdot+y)-U^j\|_{C_\textup{b}(\ren)}=\|W^j-U^j\|_{C_\textup{b}(\ren)}\leq \|w_0-u_0\|_{C_{\textup{b}}(\ren)} =\|u_0(\cdot+y)-u_0\|_{C_\textup{b}(\ren)}.
\]

\noindent\eqref{prop:propscheme-contTime}
Consider a mollification of the initial data $u_{0,\delta}:=u_0\ast\rho_\delta$, and denote the corresponding solution by $U_\delta^j$. Choose $j=1$ in \eqref{eq:EQD}--\eqref{eq:BCD} to get
$$
\|U_\delta^1-U_\delta^0\|_{C_\textup{b}(\ren)}\leq \tau\|\ifleps[U_\delta^0]\|_{C_\textup{b}(\ren)}=\tau\|\ifleps[u_\delta^0]\|_{C_\textup{b}(\ren)}:= \tau K({u_{0,\delta}}).
$$
Now, define
$$
V_\delta^{j}:=U_\delta^{j+1} \qquad\textup{for all $j\in\N$}.
$$
Then $V_\delta^j$ is the unique solution of \eqref{eq:EQD}--\eqref{eq:BCD} with initial data $V_\delta^0=U_\delta^1$. By \eqref{prop:propscheme-item4},
\begin{equation}\label{eq:ArbitraryDistanceInTime}
\begin{split}
\|U_\delta^{j+1}-U_\delta^j\|_{C_\textup{b}(\ren)}= \|V_\delta^{j}-U_\delta^j\|_{C_\textup{b}(\ren)} \leq \|V_\delta^{0}-U_\delta^0\|_{C_\textup{b}(\ren)}=\|U_\delta^{1}-U_\delta^0\|_{C_\textup{b}(\ren)}\leq \tau K({u_{0,\delta}}).
\end{split}
\end{equation}
Repeated use of the triangle inequality then yields
$$
\|U_\delta^{j+k}-U_\delta^j\|_{C_\textup{b}(\ren)}\leq \sum_{i=0}^{k-1}\|U_\delta^{(j+i)+1}-U_\delta^{j+i}\|_{C_\textup{b}(\ren)}\leq k\tau K({u_{0,\delta}})=(t_{j+k}-t_j)K({u_{0,\delta}}).
$$
Then by \eqref{prop:propscheme-item4},
\begin{align*}
&\|U^{j+k}-U^j\|_{C_\textup{b}(\ren)}\leq \|U^{j+k}-U_\delta^{j+k}\|_{C_\textup{b}(\ren)}+\|U_\delta^{j+k}-U_\delta^j\|_{C_\textup{b}(\ren)}+\|U_\delta^{j}-U^j\|_{C_\textup{b}(\ren)}\\
&\leq
2\|u_0-u_{0,\delta}\|_{C_{\textup{b}}}+(t_{j+k}-t_j)K({u_{0,\delta}}) \leq 2\omega_{u_0}(\delta)+(t_{j+k}-t_j)K({u_{0,\delta}}),
\end{align*}
where we used that by properties of mollifiers, $\|u_0-u_{0,\delta}\|_{C_{\textup{b}}}\leq
\sup_{|y|\leq\delta}\|u_0(\cdot+y)-u_0\|_{C_\textup{b}(\ren)}\leq \omega_{u_0}(\delta)$. Hence
the result follows by the definition of $\tilde\omega$.
%
%% \begin{align*}
%% &\leq 2\sup_{|y|\leq\delta}\|u_0(\cdot+y)-u_0\|_{C_\textup{b}(\ren)}+\frac{C_s}{2(1-s)}\|u_0\|_{C_\textup{b}(\ren)}\|\nabla^2\rho\|_{L^1(\ren)}(t_{j+k}-t_j)\delta^{-2}\\
%% &\quad+\frac{2C_s}{s}\|u_0\|_{C_\textup{b}(\ren)}\|\rho\|_{L^1(\ren)}(t_{j+k}-t_j)
%% \end{align*}
%% The conclusion follows by choosing $\delta=(t_{j+k}-t_j)^{\frac{1}{3}}$.
\end{proof}

%%%%%%%%%%%%%%%%%%%%%%%%%%%%%%%%%%%%%%%%%%%%%%%%%%%%%%%%%%%%%
\subsection{Semi-discrete scheme defined on $\ren\times[0,\infty)$}

In order to get uniform convergence of our approximation scheme, we need to define it on $\ren\times[0,\infty)$. Let us therefore consider the solution of \eqref{eq:EQD}--\eqref{eq:BCD} $U_\veps:\ren\times \{\tau \N\,\cup\,0\}\to\R$ and the function $u_\veps:\ren\times[0,\infty)\to \R$ defined as:
\begin{equation*}
\begin{cases}
u_\veps(x,0):=U^{0}_\veps(x)=u_0(x),\\[0.1cm]
u_\veps(x,t):= \frac{t_{j+1}-t}{\tau}U^{j}_\veps(x)+  \frac{t-t_{j}}{\tau} U^{j+1}_\veps(x)\quad\text{if $t\in(t_j, t_{j+1}]$ with $j\in \N$.}
\end{cases}
\end{equation*}

\begin{corollary}[Well-posedness and properties]\label{cor:propscheme}
Under the assumptions of Proposition \ref{prop:propscheme}, there
exists a unique pointwise solution $u_\veps\in
BUC(\ren\times[0,\infty))$ of \eqref{eq:EQD}--\eqref{eq:BCD} with
  initial data $u_0\in BUC(\ren)$. The solution, moreover, enjoys
  $C_\textup{b}$-stability, comparison principle,
  $C_\textup{b}$-contraction, continuity in space, and continuity in
  time  in form of
$\|u_\veps(\cdot,t)-u_\veps(\cdot,\tilde{t})\|_{C_\textup{b}(\ren)}\leq
\tilde\omega(|t-\tilde t|)$ for all $t,\tilde{t}\in [0,\infty]$.
\end{corollary}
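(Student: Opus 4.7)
The plan is to lift the discrete-time estimates of Proposition~\ref{prop:propscheme} to the piecewise-affine interpolant $u_\veps$. Existence and uniqueness are immediate from the definition: given the unique sequence $\{U_\veps^j\}$ provided by Proposition~\ref{prop:propscheme}, the formula for $u_\veps$ gives a single function on $\ren\times[0,\infty)$, so there is nothing to prove beyond the discrete case. To see $u_\veps\in BUC(\ren\times[0,\infty))$, I would note that on each closed slab $\ren\times[t_j,t_{j+1}]$ the function $u_\veps$ is affine in $t$ with continuous $x$-sections $U_\veps^j,U_\veps^{j+1}\in C_{\textup b}(\ren)$, hence continuous; the slabs agree at the common boundary time $t_{j+1}$, so continuity extends across. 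The bound $\|u_\veps\|_{C_\textup{b}}\le\|u_0\|_{C_\textup{b}}$ also follows because the interpolation is a convex combination of the bounded quantities $U_\veps^j,U_\veps^{j+1}$.

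The structural properties transfer by the same convex-combination principle. For $C_\textup{b}$-stability, comparison, $C_\textup{b}$-contraction, and equicontinuity in space, I would write, for $t\in(t_j,t_{j+1}]$ and $\alpha:=(t_{j+1}-t)/\tau\in[0,1)$,
\[
u_\veps(x,t)-v_\veps(x,t)=\alpha\bigl(U_\veps^j(x)-V_\veps^j(x)\bigr)+(1-\alpha)\bigl(U_\veps^{j+1}(x)-V_\veps^{j+1}(x)\bigr),
\]
and apply the respective discrete inequality from Proposition~\ref{prop:propscheme}(a)--(d) inside the supremum. The comparison principle uses $\alpha\ge 0$; the contraction and space-modulus estimates use the triangle inequality together with the fact that the discrete bounds are uniform in $j$.

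The continuity in time is the main point. Here I would split into two cases. If $t,\tilde t$ lie in the same interval $(t_j,t_{j+1}]$, then
\[
u_\veps(x,t)-u_\veps(x,\tilde t)=\frac{\tilde t-t}{\tau}\bigl(U_\veps^{j+1}(x)-U_\veps^j(x)\bigr),
\]
and the mollification argument from the proof of Proposition~\ref{prop:propscheme}(e), applied with the mollifier parameter $\delta>0$, yields $\|U_\veps^{j+1}-U_\veps^j\|_{C_\textup b}\le 2\omega_{u_0}(\delta)+\tau K(u_{0,\delta})$; dividing by $\tau$ and multiplying by $|\tilde t-t|\le\tau$ gives a bound of the form $2\omega_{u_0}(\delta)+|\tilde t-t|K(u_{0,\delta})$, which, after taking the infimum in $\delta$, is controlled by $\tilde\omega(|\tilde t-t|)$. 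For $t,\tilde t$ in different intervals, say $t\le t_{j+1}\le t_k\le\tilde t$, I would first reduce to discrete endpoints via the triangle inequality
\[
\|u_\veps(\cdot,t)-u_\veps(\cdot,\tilde t)\|_{C_\textup b}\le\|u_\veps(\cdot,t)-U_\veps^{j+1}\|_{C_\textup b}+\|U_\veps^{j+1}-U_\veps^k\|_{C_\textup b}+\|U_\veps^k-u_\veps(\cdot,\tilde t)\|_{C_\textup b},
\]
then apply Proposition~\ref{prop:propscheme}(e) to the middle term and the within-interval case to the outer terms. The concavity of $\tilde\omega$ (it is an infimum of affine functions of $r$, and $\tilde\omega(0)=0$ so it is subadditive) absorbs all three contributions into a single term $\tilde\omega(|t-\tilde t|)$.

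The only subtle point I expect is accounting precisely for the numerical constants when passing from the discrete estimate $\|U_\veps^{j+1}-U_\veps^j\|_{C_\textup b}\le 2\omega_{u_0}(\delta)+\tau K(u_{0,\delta})$ to the statement with $\tilde\omega$, but this is already absorbed in the author's definition and the subadditivity of $\tilde\omega$; no essential new estimate is required beyond the ones proved at the discrete level.
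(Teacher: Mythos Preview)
Your transfer of the $C_{\textup b}$-bound, comparison, contraction, and space modulus via the convex-combination identity is exactly what the paper does, and your treatment of the within-interval time estimate is also correct.

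The different-intervals case, however, contains a real gap. After the triangle inequality you bound the three pieces by $\tilde\omega(t_{j+1}-t)$, $\tilde\omega(t_k-t_{j+1})$, $\tilde\omega(\tilde t-t_k)$ and then invoke subadditivity of $\tilde\omega$ to ``absorb'' their sum into $\tilde\omega(|t-\tilde t|)$. But subadditivity of a concave modulus with $\tilde\omega(0)=0$ says
\[
\tilde\omega(a+b+c)\le\tilde\omega(a)+\tilde\omega(b)+\tilde\omega(c),
\]
which is the opposite direction from what you need. Once you have optimised in $\delta$ separately on each piece, you cannot recombine to get $\tilde\omega$ of the total increment; concavity helps you go from a large increment to a fraction of it (as in your within-interval step), not the other way.

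The paper sidesteps this by postponing the optimisation in $\delta$: it telescopes the \emph{mollified} interpolant $(u_\veps)_\delta$, using the sharp one-step bound $\|(U_\veps^{l+1})_\delta-(U_\veps^l)_\delta\|_{C_{\textup b}}\le\tau K(u_{0,\delta})$ from \eqref{eq:ArbitraryDistanceInTime}, which carries no $\omega_{u_0}(\delta)$ term. The fractional endpoint pieces, handled by linear interpolation, contribute $(\tilde t-t_{j+k})K(u_{0,\delta})$ and $(t_{j+1}-t)K(u_{0,\delta})$, and everything adds \emph{linearly} to $(\tilde t-t)K(u_{0,\delta})$. Only then does one compare $(u_\veps)_\delta$ with $u_\veps$ via $C_{\textup b}$-contraction, incurring a single $2\omega_{u_0}(\delta)$, and take the infimum in $\delta$ once. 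Your argument is easily repaired along these lines---keep $\delta$ fixed until the very end---but as written the subadditivity step fails.
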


\begin{proof}  
We easily inherit all properties from $U_\veps$ to $u_\veps$, e.g.
\[
  \|u_\veps(\cdot,t)\|_{C_\textup{b}} 
  \leq
  \frac{(t_{j+1}-t)}{\tau} \left \|
  U^{j}_\veps\right\|_{C_\textup{b}}+ \frac{(t-t_{j})}{\tau} \left\|
  U^{j+1}_\veps\right\|_{C_\textup{b}} \\
 \leq \Big(\frac{(t_{j+1}-t)}{\tau}+  \frac{(t-t_{j})}{\tau}\Big) \left\| u_0\right\|_{C_\textup{b}}=  \left\| u_0\right\|_{C_\textup{b}}.
%\end{split}
\]
The other properties follows in a similar way, and we only explain
the most difficult one, the continuity in time. 
  Repeating the steps of the proof of Proposition
 \ref{prop:propscheme}\eqref{prop:propscheme-contTime}, for
 % , except that we now use the time interpolant. For
 $\tilde{t}\in(t_{j+k},t_{j+k+1}]$ and $t\in(t_{j},t_{j+1}]$,
\begin{equation*}
\begin{split}
  &|(u_\veps)_{\delta}(x,\tilde{t})-(u_\veps)_{\delta}(x,t)|\\
  &\leq |(u_\veps)_{\delta}(x,\tilde{t})-(u_\veps)_{\delta}(x,t_{j+k})|+| (u_\veps)_{\delta}(x,t_{j+1})-(u_\veps)_{\delta}(x,t)|  + \sum_{l={j+1}}^{j+k-1}|(u_\veps)_{\delta}(x,t_{l+1}) -  (u_\veps)_{\delta}(x,t_{l})|\\
&=|(u_\veps)_{\delta}(x,\tilde{t})-(U_\veps^{j+k})_{\delta}(x)|+|(U_\veps^{j+1})_{\delta}(x)-(u_\veps)_{\delta}(x,t) | + \sum_{l={j+1}}^{j+k-1}|(U_\veps^{l+1})_{\delta}(x) -  (U_\veps^{l})_{\delta}(x)|.
\end{split}
\end{equation*}
By the definition of linear interpolation
\begin{align*}
\|(u_\veps)_{\delta}(\cdot,\tilde{t})-(U_\veps^{j+k})_{\delta}\|_{C_\textup{b}(\ren)}&\leq\frac{(\tilde{t}-t_{j+k})}{\tau}\|(U_\veps^{j+k+1})_{\delta}-(U_\veps^{j+k})_{\delta}\|_{C_\textup{b}(\ren)},\\
\|(U_\veps^{j+1})_{\delta}-(u_\veps)_{\delta}(\cdot,t)\|_{C_\textup{b}(\ren)}&\leq \frac{(t_{j+1}-t)}{\tau}\|(U_\veps^{j+1})_{\delta}-(U_\veps^j)_{\delta}\|_{C_\textup{b}(\ren)},
\end{align*}
and then by repeated use of \eqref{eq:ArbitraryDistanceInTime},
\begin{equation*}
  \|(u_\veps)_{\delta}(\cdot,\tilde{t})-(u_\veps)_{\delta}(\cdot,t)\|_{C_\textup{b}(\ren)}
  %\\
%&=\|(u_\veps)_{\delta}(\cdot,\tilde{t})-(U_\veps^{j+k})_{\delta}\|_{C_\textup{b}(\ren)}+\|(U_\veps^{j+1})_{\delta}-(u_\veps)_{\delta}(\cdot,t) \|_{C_\textup{b}(\ren)}+ \sum_{l={j+1}}^{j+k-1}\|(U_\veps^{l+1})_{\delta} -  (U_\veps^{l})_{\delta}\|_{C_\textup{b}(\ren)}\\
\leq \Big((\tilde{t}-t_{j+k})+\sum_{l={j+1}}^{j+k-1}\tau+(t_{j+1}-t)\Big)K(u_{0,\delta})=(\tilde{t}-t)K(u_{0,\delta}).
\end{equation*}
We can then conclude the proof continuing as in the proof of Proposition
\ref{prop:propscheme}\eqref{prop:propscheme-contTime}.
%to \cm get $
%\|u_\veps(\cdot,t)-u_\veps(\cdot,\tilde{t})\|_{C_\textup{b}(\ren)}\leq
%\tilde\omega(|t-\tilde t|).
%2\sup_{|y|\leq |t-\tilde{t}|^{\frac{1}{3}}}\|u_0(\cdot+y)-u_0\|_{C_\textup{b}(\ren)}+C\big(|t-\tilde{t}|^{\frac{1}{3}}+|t-\tilde{t}|\big).
%$
%The proof is then complete.
\end{proof}

%%%%%%%%%%%%%%%%%%%%%%%%%%%%%%%%%%%%%%%%%%%%%%%%%%%%%%%%%%%%%
\subsection{Compactness in $UC_{\textup{\textnormal{loc}}}(\ren\times[0,\infty))$}

\begin{proposition}[Compactness]\label{prop:SemiDiscreteCompactness}
Under the assumptions of Proposition \ref{prop:propscheme}, there exists
a subsequence $\{u_{\veps_k}\}_{k\in\N}$ and a  $u\in
C_{\textup{b}}(\ren\times[0,\infty))$
  %ERJ DROP, UC_loc = C^0: B(\ren\times[0,\infty))\cap UC_\textup{loc}(\ren\times[0,\infty))$
such that
$$
u_{\veps_k}\to u\qquad\text{locally uniformly in $\ren\times[0,\infty)$ as $k\to\infty$.}
$$
% ERJ: Unif.conv. is stronger than pt.wise conv. So we should DROP: Moreover, the convergence is pointwise (up to possibly another subsequence).
\end{proposition}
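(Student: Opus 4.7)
The plan is to invoke Arzelà--Ascoli together with a diagonal extraction, using the uniform bounds and moduli of continuity already established in Corollary \ref{cor:propscheme}.

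First I would collect the $\veps$-independent estimates for the family $\{u_\veps\}_{\veps>0}$. The $C_\textup{b}$-stability gives
\[
\|u_\veps(\cdot,t)\|_{C_\textup{b}(\ren)} \leq \|u_0\|_{C_\textup{b}(\ren)} \qquad \text{for all $\veps,t>0$,}
\]
so the family is uniformly bounded. The spatial continuity estimate yields
\[
\|u_\veps(\cdot+y,t)-u_\veps(\cdot,t)\|_{C_\textup{b}(\ren)} \leq \omega_{u_0}(|y|),
\]
and the temporal estimate from Corollary \ref{cor:propscheme} gives
\[
\|u_\veps(\cdot,\tilde t)-u_\veps(\cdot,t)\|_{C_\textup{b}(\ren)} \leq \tilde\omega(|\tilde t - t|).
\]
The crucial observation is that $\tilde\omega$ really is $\veps$-independent: the quantity $\sup_{\veps>0}\|\ifleps[u_{0,\delta}]\|_{C_\textup{b}(\ren)}$ appearing in its definition is finite by Lemma \ref{lem:unifBoundLveps} applied to the smooth mollification $u_{0,\delta}\in C^2_\textup{b}(\ren)$, with a bound depending only on $s$, $\|\nabla u_{0,\delta}\|_{C_\textup{b}}$, and $\|D^2 u_{0,\delta}\|_{C_\textup{b}}$.

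Next I would combine these into joint equicontinuity on $\ren\times[0,\infty)$: for all $(x,t),(\tilde x,\tilde t)$,
\[
|u_\veps(x,t)-u_\veps(\tilde x,\tilde t)| \leq \omega_{u_0}(|x-\tilde x|) + \tilde\omega(|t-\tilde t|),
\]
which is a modulus of continuity uniform in $\veps$. Together with the uniform bound, the family $\{u_\veps\}$ is equicontinuous and uniformly bounded on every compact set $K_m := \overline{B_m(0)} \times [0,m]$, $m\in\N$. Applying Arzelà--Ascoli on $K_1$ produces a subsequence converging uniformly on $K_1$; extracting further subsequences on $K_2, K_3,\ldots$ and then diagonalizing yields a single subsequence $\{u_{\veps_k}\}$ converging locally uniformly on $\ren\times[0,\infty)$ to some $u$.

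Finally, the limit $u$ inherits the bounds: $\|u(\cdot,t)\|_{C_\textup{b}(\ren)}\leq \|u_0\|_{C_\textup{b}(\ren)}$ and the same spatial/temporal moduli pass to the limit, so $u\in C_\textup{b}(\ren\times[0,\infty))$ (in fact in $BUC$). The main thing to watch out for is verifying that the temporal modulus is genuinely $\veps$-uniform, which is exactly the point of introducing the mollified initial data and using Lemma \ref{lem:unifBoundLveps}; after that, the argument is a routine Arzelà--Ascoli plus diagonal extraction.
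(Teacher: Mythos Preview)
Your argument is correct and follows essentially the same route as the paper: equiboundedness and equicontinuity from Corollary~\ref{cor:propscheme}, then Arzel\`a--Ascoli. The paper states this in two lines, while you have spelled out the $\veps$-independence of $\tilde\omega$ and the diagonal extraction explicitly, but there is no substantive difference in approach.
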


\begin{proof}
  The sequence $\{u_{\veps}\}_\veps$ is equibounded  and
  equicontinuous by Corollary \ref{cor:propscheme} (see also Proposition
  \ref{prop:propscheme}). The result then follows from
  the Arzel\`a-Ascoli compactness theorem.
%  by the $C_\textup{b}$-stability in Corollary \ref{cor:propscheme}. To prove that it is also equicontinuous, we consider, for $x,y\in\ren$ and $t,\tilde{t}>0$ such that $|x-y|=|\xi|\leq \eta$ and $|t-\tilde{t}|\leq \kappa$,
%% \begin{equation*}
%% \begin{split}
%% |u_\veps(x,t)-u_\veps(y,\tilde{t})|&\leq |u_\veps(x,t)-u_\veps(x+(y-x),t)|+|u_\veps(y,t)-u_\veps(y,\tilde{t})|\\
%% &\leq \sup_{|\xi|\leq \eta}\|u_\veps(\cdot,t)-u_\veps(\cdot+\xi,t)\|_{C_\textup{b}(\ren)}+\|u_\veps(\cdot,t)-u_\veps(\cdot,\tilde{t})\|_{C_\textup{b}(\ren)}\\
%% &\leq \sup_{|\xi|\leq \eta}\|u_0(\cdot+\xi)-u_0\|_{C_{\textup{b}}(\ren)}+2\sup_{|\zeta|\leq \kappa^{\frac{1}{3}}}\|u_0(\cdot+\zeta)-u_0\|_{C_\textup{b}(\ren)}+C\big(\kappa^{\frac{1}{3}}+\kappa\big),
%% \end{split}
%% \end{equation*}
%% where we used Corollary \ref{cor:propscheme} again. A covering and diagonal argument then also gives pointwise convergence. Finally, by the $C_\textup{b}$-stability in Corollary \ref{cor:propscheme} and for $k$ big enough,
%% $$
%% |u(x,t)|\leq |u(x,t)-u_{\veps_k}(x,t)|+|u_{\veps_k}(x,t)|\leq 1+\|u_0\|_{C_\textup{b}(\ren)},
%% $$
%% which concludes the proof.
\end{proof}

 Taking pointwise limits in the a priori estimates of Corollary \ref{cor:propscheme} (see
also Propostion \ref{prop:propscheme}), the limit $u$ immediately inherits these
estimates.
%We immediately have:

\begin{corollary}[A priori estimates]\label{cor:PropertiesOfLimitFunction}
 Assume $u_0\in BUC(\ren)$. Then the limit $u$ from Propostion
\ref{prop:SemiDiscreteCompactness} enjoys the following properties:
\begin{enumerate}[\rm (a)]
\item \textup{($C_\textup{b}$-stability)} For all $t>0$, $\|u(\cdot,t)\|_{C_\textup{b}(\ren)}\leq \|u_0\|_{C_\textup{b}(\ren)}$.
\item \textup{(Uniform  continuity in space)} For all $y\in\ren$ and all $t>0$,
\item[] \qquad$\|u(\cdot+y,t)-u(\cdot,t)\|_{C_\textup{b}(\ren)} \leq \omega_{u_0}(|y|).$
\item \textup{(Uniform continuity in time)}  For all $t,\tilde{t}>0$,
\item[] \qquad $\|u(\cdot,t)-u(\cdot,\tilde{t})\|_{C_\textup{b}(\ren)}\leq \tilde
  \omega(t-\tilde t)$\qquad where $\tilde
    \omega$ is defined in Theorem \ref{thm:ExistenceAPrioriViscosity}.
\end{enumerate}
\end{corollary}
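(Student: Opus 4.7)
The plan is to inherit each of the three estimates from the approximations $u_{\veps_k}$ by a straightforward pointwise limit, followed by a supremum over the relevant free variable. Proposition \ref{prop:SemiDiscreteCompactness} provides locally uniform convergence $u_{\veps_k}\to u$ on $\ren\times[0,\infty)$, which in particular gives pointwise convergence at every $(x,t)$; since the right-hand sides of the three bounds in Corollary \ref{cor:propscheme} depend neither on $\veps$ nor on the running spatial variable, each bound survives the limit automatically.

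For part (a), I would fix $(x,t)\in\ren\times(0,\infty)$ and apply the $C_\textup{b}$-stability from Corollary \ref{cor:propscheme} to obtain $|u_{\veps_k}(x,t)|\leq\|u_0\|_{C_\textup{b}(\ren)}$ for every $k$; sending $k\to\infty$ and then taking $\sup_{x\in\ren}$ yields the claim. For part (b), the identical reasoning applied pointwise at both $x$ and $x+y$ to the inequality
\begin{equation*}
|u_{\veps_k}(x+y,t)-u_{\veps_k}(x,t)|\leq\omega_{u_0}(|y|)
\end{equation*}
delivers $|u(x+y,t)-u(x,t)|\leq\omega_{u_0}(|y|)$ after the limit, and $\sup_{x\in\ren}$ finishes the estimate. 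Part (c) is entirely analogous, starting from $|u_{\veps_k}(x,t)-u_{\veps_k}(x,\tilde t)|\leq\tilde\omega(|t-\tilde t|)$ for fixed $x,t,\tilde t$.

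There is essentially no obstacle beyond bookkeeping, since the argument reduces to a pointwise limit of scalar inequalities whose right-hand sides do not depend on the running variable. The only subtlety worth flagging is in (c): the time modulus $\tilde\omega$ that appears in Corollary \ref{cor:propscheme} is the same as the one in Theorem \ref{thm:ExistenceAPrioriViscosity}(c), because its definition already incorporates $\sup_{\veps>0}\|\ifleps[u_{0,\delta}]\|_{C_\textup{b}(\ren)}$. This is precisely what makes the bound genuinely $\veps$-uniform and lets the limit $u$ inherit exactly the same modulus, so nothing is lost in the passage $\veps_k\to 0$.
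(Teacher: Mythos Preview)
Your proposal is correct and matches the paper's approach exactly: the paper simply states that the limit $u$ inherits these estimates by taking pointwise limits in the a priori estimates of Corollary~\ref{cor:propscheme}. Your added remark that $\tilde\omega$ is $\veps$-uniform by construction is precisely the point that makes the time estimate pass to the limit without change.
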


\section{Definitions, existence and properties of viscosity solutions}
\label{sec:existence}

%%%%%%%%%%%%%%%%%%%%%%%%%%%%%%%%%%%%%%%%%%%%%%%%%%%%%%%%%%%%%

In this section we define the concept of viscosity solution. Before
doing so we need to introduce two new operators that will be used when
testing at zero gradient points.

\begin{definition}
For $\phi\in C^{1,1}(x)\cap B(\ren)$,
\begin{align*}%\label{eq:def3a}
\iflp \phi(x)&:=C_s \sup_{|y|=1}\int_0^\infty\big(\phi\left(x+ \eta  y\right)+\phi\left(x- \eta  y\right)-2\phi(x)\big)\frac{\dd \eta }{\eta ^{1+2s}},\\
\iflm \phi(x)&:=C_s \inf_{|y|=1}\int_0^\infty\big(\phi\left(x+ \eta  y\right)+\phi\left(x- \eta  y\right)-2\phi(x)\big)\frac{\dd \eta }{\eta ^{1+2s}}.
\end{align*}
\end{definition}

We immediately have:

\begin{lemma}\label{lem:orderop}
For $\phi\in C^{1,1}(x)\cap B(\ren)$,
\[
\iflm \phi(x) \leq \ifl \phi(x) \leq \iflp \phi(x).
\]
\end{lemma}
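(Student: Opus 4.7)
The plan is to split into the two cases of Lemma \ref{lem:EquivalentDefIFL} and handle each with a short direct argument; the whole proof is essentially a bookkeeping exercise once that lemma is available.

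First I would treat the case $\nabla\phi(x)\neq 0$. Here Lemma \ref{lem:EquivalentDefIFL} identifies $\ifl\phi(x)$ with the value of
\[
C_s \int_0^\infty\bigl(\phi(x+\eta y)+\phi(x-\eta y)-2\phi(x)\bigr)\frac{\dd\eta}{\eta^{1+2s}}
\]
at the specific unit vector $y=\zeta:=\nabla\phi(x)/|\nabla\phi(x)|$. Since $\iflp\phi(x)$ and $\iflm\phi(x)$ are, by definition, the supremum and the infimum of this very same expression over $|y|=1$, the chain $\iflm\phi(x)\leq \ifl\phi(x)\leq \iflp\phi(x)$ is immediate.

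In the case $\nabla\phi(x)=0$, the second part of Lemma \ref{lem:EquivalentDefIFL}, combined with the change of variables $y\mapsto -y$ in the infimum, yields
\[
\frac{1}{C_s}\ifl\phi(x)=\sup_{|y|=1}I(y)+\inf_{|y|=1}I(y),\qquad I(y):=\int_0^\infty\bigl(\phi(x+\eta y)-\phi(x)\bigr)\frac{\dd\eta}{\eta^{1+2s}},
\]
and the integrand in the definition of $\iflp$ and $\iflm$ is exactly $I(y)+I(-y)$. To prove $\ifl\phi(x)\leq \iflp\phi(x)$, I would pick a maximizer $y^{+}$ of $I$ on the unit sphere; since $I(-y^{+})\geq \inf_{|y|=1} I$, this gives $I(y^{+})+I(-y^{+})\geq \sup I+\inf I$, and taking the supremum on the left recovers $\iflp\phi(x)/C_s\geq \ifl\phi(x)/C_s$. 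The dual choice of a minimizer $y^{-}$ of $I$ yields $I(y^{-})+I(-y^{-})\leq \inf I+\sup I$, hence $\iflm\phi(x)\leq \ifl\phi(x)$.

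The only mild technical point is the existence of the extremal directions $y^{\pm}$, which I would justify by continuity of $I$ on the compact unit sphere via dominated convergence: in this case $p_x=\nabla\phi(x)=0$, so the $C^{1,1}(x)$ condition \eqref{eq:PhiTaylor} forces $|\phi(x+\eta y)-\phi(x)|\leq C_x\eta^2$ for $\eta\leq \eta_x$ uniformly in $|y|=1$, while $|\phi(x+\eta y)-\phi(x)|\leq 2\|\phi\|_{C_\textup{b}(\ren)}$ otherwise, giving an integrable majorant against $\eta^{-1-2s}\dd\eta$. If one prefers to avoid existence of maximizers one can instead take near-extremal sequences $y_k^{\pm}$ and pass to the limit in the same inequalities.
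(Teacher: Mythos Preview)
Your proof is correct and follows the same case split and strategy as the paper. The only cosmetic difference is that in the zero-gradient case the paper avoids the continuity/maximizer discussion by directly applying the inequality $\sup_y A(y)-\sup_y B(y)\leq \sup_y\bigl(A(y)-B(y)\bigr)$ (with $A(y)=I(y)$ and $B(y)=-I(-y)$), which is precisely the ``near-extremal sequence'' alternative you mention at the end.
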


\begin{proof}
Recall Lemma \ref{lem:EquivalentDefIFL}. The result is trivial unless $\nabla\phi(x)=0$. In that case,
\[\begin{split}
\ifl \phi(x)&=C_s\sup_{|y|=1} \int_{0}^\infty\big(\phi(x+\eta y)-\phi(x)\big) \frac{\dd \eta }{\eta ^{1+2s}} + C_s\inf_{|y|=1} \int_{0}^\infty\big(\phi(x-\eta y)-\phi(x)\big) \frac{\dd \eta }{\eta ^{1+2s}}\\
&=C_s\sup_{|y|=1} \int_{0}^\infty\big(\phi(x+\eta y)-\phi(x)\big) \frac{\dd \eta }{\eta ^{1+2s}} - C_s\sup_{|y|=1} \left(- \int_{0}^\infty\big(\phi(x-\eta y)-\phi(x)\big) \frac{\dd \eta }{\eta ^{1+2s}} \right)\\
&\leq C_s  \sup_{|y|=1}  \left(\Big( \int_{0}^\infty\big(\phi(x+\eta y)-\phi(x)\big) \frac{\dd \eta }{\eta ^{1+2s}}\Big) - \Big(- \int_{0}^\infty\big(\phi(x-\eta y)-\phi(x)\big) \frac{\dd \eta }{\eta ^{1+2s}} \Big)\right)\\
%&= C_s  \sup_{|y|=1}  \int_{0}^\infty\big(\phi(x+\eta y)+\phi(x-\eta y)-2\phi(x)\big) \frac{\dd \eta }{\eta ^{1+2s}} \\
&= \iflp \phi(x).
\end{split}
\]
The result $\iflm \phi(x) \leq \ifl \phi(x)$ follows in a similar way.
\end{proof}

We are now ready to define the concept of viscosity solution.

\begin{definition}[Viscosity solution]\label{def:viscsol}
\begin{enumerate}[\rm (a)]
\item A globally bounded upper semicontinuous function $u:\ren\times(0,\infty)\to \R$ is a \emph{viscosity subsolution} of \eqref{eq:EQ} if, for all $(x_0,t_0)\in \ren\times(0,\infty)$, all $\phi\in C^2(B_R(x_0,t_0))\cap BUC(\ren\times(0,\infty)\setminus B_R(x_0,t_0))$ for some $R>0$ and such that
\begin{enumerate}[\rm (i)]
\item $u(x_0,t_0)-\phi(x_0,t_0)=\sup_{(x,t)\in B_R(x_0,t_0)}\big(u(x,t)-\phi(x,t)\big)$,
\item $u(x_0,t_0)-\phi(x_0,t_0)>u(x,t)-\phi(x,t)$ for all $(x,t)\in B_R(x_0,t_0)\setminus (x_0,t_0)$,
\item $u(x_0,t_0)-\phi(x_0,t_0)\geq u(x,t)-\phi(x,t)$ for all $(x,t)\in \ren\times(0,\infty) \setminus B_R(x_0,t_0)$,
\end{enumerate}
then
\begin{empheq}[left=\empheqlbrace]{align}
\partial_t \phi(x_0,t_0) \leq & \  \ifl \phi(x_0,t_0), \hspace{-3cm}  &  \textup{if}  &\hspace{0.5cm}  \nabla \phi(x_0,t_0) \not=0  \nonumber\\
\partial_t \phi(x_0,t_0)\leq & \ \iflp \phi(x_0,t_0), \hspace{-3cm}  &   \textup{if}&  \hspace{0.5cm}  \nabla \phi(x_0,t_0) =0 . \label{eq:viscsub2}
\end{empheq}
\item A globally bounded lower semicontinuous function $u:\ren\times(0,\infty)\to \R$ is a \emph{viscosity supersolution} of \eqref{eq:EQ} if, for all $(x_0,t_0)\in \ren\times(0,\infty)$, all $\phi\in C^2(B_R(x_0,t_0))\cap BUC(\ren\times(0,\infty)\setminus B_R(x_0,t_0))$ for some $R>0$ and such that
\begin{enumerate}[\rm (i)]
\item $u(x_0,t_0)-\phi(x_0,t_0)=\inf_{(x,t)\in B_R(x_0,t_0)}(u(x,t)-\phi(x,t))$,
\item $u(x_0,t_0)-\phi(x_0,t_0)<u(x,t)-\phi(x,t)$ for all $(x,t)\in B_R(x_0,t_0)\setminus (x_0,t_0)$,
\item $u(x_0,t_0)-\phi(x_0,t_0)\leq u(x,t)-\phi(x,t)$ for all $(x,t)\in \ren\times(0,\infty) \setminus B_R(x_0,t_0)$,
\end{enumerate}
then
\begin{empheq}[left=\empheqlbrace]{align}
\partial_t \phi(x_0,t_0) \geq & \  \ifl \phi(x_0,t_0), \hspace{-3cm}  &  \textup{if}  &\hspace{0.5cm}  \nabla \phi(x_0,t_0) \not=0  \nonumber\\
\partial_t \phi(x_0,t_0)\geq & \ \iflm \phi(x_0,t_0), \hspace{-3cm}  &   \textup{if}&  \hspace{0.5cm}  \nabla \phi(x_0,t_0) =0 . \label{eq:viscsuper2}
\end{empheq}
\item A  function  $u\in C_{\textup{b}}(\ren\times[0,\infty))$ is a
  viscosity solution of \eqref{eq:EQ} if it is both a viscosity
  subsolution and a viscosity supersolution. 
\item The viscosity solution $u$ takes the initial data in a pointwise way: $u(x,0)=u_0(x)$ for all $x\in\ren$. 
\end{enumerate}
\end{definition}

\begin{remark}\label{rem-visc} %We have several remarks to do:
\begin{enumerate}[\rm (a)]
\item  In points where the gradient of $u$ is zero, we only require 
\(
\partial_tu\in[ \iflm u,  \iflp u].
\)

\item In the  local elliptic and homogeneous  case
  \cite{JuLiMa01}, comparison follows   without any 
  %  even if one do not put any
  condition at points where $\nabla
  \phi=0$. In more general cases conditions are needed. We impose
  conditions \eqref{eq:viscsub2} and 
  \eqref{eq:viscsuper2} which are generalisations of the conditions
  introduced in the local parabolic case \cite{AkJuKa09}. 
  
  It is easy to show that comparison and uniqueness cannot hold
  %construct a counterexample \cm  if no
  without such conditions: E.g. $u(x,t)=1$ and $v(x,t)=2\sin(t)$ would
  then both be 
  viscosity solutions of \eqref{eq:EQ} since $\nabla u=\nabla v=0$ at
  every point.
  %% for $(x,t)\in 
  %% \ren\times[0,\infty)$. Since $\nabla u=\nabla v=0$ at every point,
  %%   %for     all $(x,t)\in \ren\times[0,\infty)$,
  %%   these functions  would  be
  %%     viscosity solutions of  \eqref{eq:EQ} if  we do not need to test
  %%     at zero gradient points, since we would not need need to test in
  %%     any point.
 However comparison does not hold since %     Moreover, the initial data satisfy 
\(
u(x,0)=1 \geq 0 =v(x,0)
\)
%However,
while
\(
u(x,\pi/2)=1\leq 2 = v(x,\pi/2).
\) 

Let us check that $v$ is not longer a viscosity solution when we
impose \eqref{eq:viscsub2}. Let $K>0$ and $0\leq \psi\in C^2_{\textup{b}}(\R)$ be radial with
$\psi(r)=r^2$ for $|r|<1$ and $\psi(r)=0$ for $|r|>2$. We define
%% \[
%% \phi(x,t)=v(\cm 0,2\pi\nc) + \frac{(1-s)}{C_s} |x|^2(1-|x|^2) \indik_{|\cdot|\leq1}(x) + -(t-2\pi)^2(1-(t-\pi)^2) \indik_{|\cdot-t_0|\leq1}(t).
%% \]
\[
\phi(x,t)=%\frac{(2-2s)}{3C_s\|\psi''\|_{C_b}}
v(t)+K\psi(|x|)+ \psi(t-t_0)\qquad\text{for some $t_0\in(0,\infty)$.}
\]
It is then immediate that $v-\phi$ has a strict local max at $(0,t_0)$ and $\nabla \phi=\nabla v=0$. Now let $t_0=2\pi$, then $\partial_{t}\phi(0,2\pi)= \partial_{t}v(0,2\pi)=2\cos(2\pi)=2$, and by radial symmetry and followed by compact support of $\phi$ leads to 
\[
\begin{split}
\iflp \phi(0,2\pi)&= KC_s\int_{0}^\infty \big( \phi(\eta e_1, 2\pi)+\phi(-\eta e_1, 2\pi)-2\phi(0,2\pi)\big)\frac{\dd \eta}{\eta^{1+2s}}\\
&\leq KC_s \int_{0}^2 \|\psi''\|_{C_{\textup{b}}}
\eta^2 \frac{\dd \eta}{\eta^{1+2s}}%=% (2-2s)  \int_{0}^1 \eta^{1-2s}
% \dd \eta
=KC_s\|\psi''\|_{C_{\textup{b}}}\frac{2^{2-2s}}{(2-2s)}\leq 1
\end{split}
\]
if $K$ is small enough.
This contradicts \eqref{eq:viscsub2} since
\[
\partial_{t}\phi(0,2\pi)=2 \geq1\geq  \iflp \phi(0,2\pi).
\]
We conclude that $v(x,t)=2\sin(t)$ is not a viscosity subsolution in the sense of Definition \ref{def:viscsol}.
\item\label{rem-visc-item4} Since $\partial_t$, $\ifl$, $\iflp$ and
  $\iflm$ are invariant under 
   translations of $\phi$ by constants,  without loss of generality, we can replace the conditions on the
  test function in Definition \ref{def:viscsol} by
\begin{enumerate}[(i')]
\item\label{test-mod-item1} $\phi(x_0,t_0)=u(x_0,t_0)$,
\item\label{test-mod-item2} $\phi>u$ (resp. $\phi<u$) in $ B_R(x_0,t_0)\setminus (x_0,t_0)$,
\item\label{test-mod-item3} $\phi\geq u$ (resp. $\phi\leq u$) in $ \ren\times(0,\infty) \setminus B_R(x_0,t_0)$.
\end{enumerate}

\item\label{rem-visc-item5} We can also assume that the max is
  globally strict by adding a small in $C_{\textup{b}}^2$ perturbation to $\phi$
  supported in $B_R^c$, e.g. replacing $\phi$ by $\phi+\delta \psi$
  where $\psi\in 
  C_{\textup{b}}^2$ is such that $0\leq \psi\leq 1$, $\psi=0$ in  $B_R$ and $\psi>0$ in
  $B_R^c$. This new test function also satisfies
  \eqref{rem-visc-item4}, but with a strict inequality in part
  (iii'). Moreover, at the max point local derivatives up to order 2
  coinside with those of $\phi$, while nonlocal derivatives differ by
  an $O(\delta)$ term since $|\ifl\psi|+|\iflp \psi|+|\iflm \psi|\leq
  C \|\psi\|_{C^2_{\textup{b}}}$. Before concluding the proof we then need to
  send $\delta\to0$. Since this is never a problem, we will omit this
  modification in some proofs and simply assume globally strict max in
  the definition of viscosity solutions.
\end{enumerate}
\end{remark}

\subsection{Existence  and properties  of
  solutions}\label{subsec:existence}

%% \cm We start with an interpolations result.

%% \begin{lemma}\label{lem:interp}
%%   If $\phi\in C_b^2(\R^N)$ and $2s\in(1,2)$, then there is $c(s,\rho)$
%%   such that for all $\veps>0$,
%%   $$\|\ifleps \phi\|_{C_b}\leq c(s)\|D\phi\|_{C_b}^{2-2s}\|D^2\phi\|_{C_b}^{2s-1}. $$
%%   \end{lemma}
%% \begin{proof}
%% In the expression for $\ifleps \phi$, we split the domain of
%% integration in two, $(\veps,r)$ and $(r,\infty)$. Then we Taylor expand to
%% second and first order respectively. The result 
%% is the following:
%% $$\|\ifleps \phi\|_{C_b}\leq \|D^2\phi\|\int_\veps^r \eta^2
%% \frac{d\eta}{\eta^{1+2s}}+2\|D\phi\|\int_r^\infty \eta
%% \frac{d\eta}{\eta^{1+2s}}\leq
%% c(s)\Big(r^{2-2s}\|D^2\phi\|+r^{1-2s}\|D\phi\|\Big).$$
%% Minimizing with respect to $r$ then gives the result.
%%   \end{proof}
%% \nc

We are now in a position to prove %the existence part of
Theorem \ref{thm:ExistenceAPrioriViscosity}.  

\begin{proof}[%The existence part
    Proof of Theorem \ref{thm:ExistenceAPrioriViscosity}]
By Proposition \ref{prop:SemiDiscreteCompactness} there is a 
subsequence $u_\veps$ such that
\[
u_\veps \to u \qquad \textup{locally uniformly in $\ren\times [0,\infty)$ as $\veps\to0^+$,}
\]
 and by Corollary \ref{cor:PropertiesOfLimitFunction},  $u\in
BUC(\ren\times[0,\infty))$ and satisfies the a priori estimates in Theorem
  \ref{thm:ExistenceAPrioriViscosity}. Let us prove that $\tilde
  \omega$ has the property claimed in part (c). To do this we use two
  basic facts about mollifiers: Since $u_0\in BUC(\ren)$, it
  follows that (differentiate $\rho$, use Young for convolutions) 
$$
\|D^ku_{0,\delta}\|_{C_{\textup{b}}}\leq
  \|D^k\rho\|_{L^1}\|u_0\|_{C_{\textup{b}}}\delta^{-k}\qquad\text{for $k\in\N$.}
  $$
  Then by Lemma \ref{lem:unifBoundLveps}, we have the following bound:
 $$
 \|\ifleps [u_{0,\delta}]\|_{C_{\textup{b}}}\leq c(s)
  \|\nabla\rho\|_{L^1}^{2-2s}\|D^2\rho\|_{L^1}^{2s-1}\|u_0\|_{C_{\textup{b}}}\delta^{-2s}. 
  $$
 Since $\delta^{-2s}\leq 1+\delta^{-2}$, the estimate on
 $\tilde\omega$ follows after taking $\delta=r^{1/3}$. %(cf. Corollary
  %\ref{cor:PropertiesOfLimitFunction}).
 
  It remains to check that $u$ is  a viscosity solution according
  to Definition \ref{def:viscsol}. By Remark \ref{rem-visc}\eqref{rem-visc-item5}, consider $(x_0,t_0)\in \ren\times(0,\infty)$ and $\phi\in C^2(B_R(x_0,t_0))\cap BUC(\ren\times(0,\infty)\setminus B_R(x_0,t_0))$ such that
\begin{enumerate}[\rm (i)]
\item $u(x_0,t_0)-\phi(x_0,t_0)=\sup_{(x,t)\in B_R(x_0,t_0)}(u(x,t)-\phi(x,t))$,
\item $u(x_0,t_0)-\phi(x_0,t_0)>u(x,t)-\phi(x,t)$ for all $(x,t)\in \ren\times(0,\infty)\setminus (x_0,t_0)$.
\end{enumerate}
Local uniform convergence ensures that there exists a sequence $\{(x_\veps,t_\veps)\}_{\veps>0}$ such that
\begin{enumerate}[\rm (i)]
\item $u_\veps(x_\veps,t_\veps)-\phi(x_\veps,t_\veps)=\sup_{(x,t)\in B_R(x_\veps,t_\veps)}(u(x,t)-\phi(x,t)):=M_\veps$,
\item $u_\veps(x_\veps,t_\veps)-\phi(x_\veps,t_\veps)>u_\veps(x,t)-\phi(x,t)$ for all $(x,t)\in \ren\times(0,\infty)\setminus (x_\veps,t_\veps)$,
\end{enumerate}
and
\[
(x_\veps,t_\veps)\to (x_0,t_0) \qquad \textup{as $\veps\to0$.}
\]

Recall that Corollary \ref{cor:propscheme} ensures that $u_\veps$ solves the semidiscrete scheme. For simplicity, we use the notation in Remark \ref{remark:schemeform}. Let $t_j$ be such that $t_\veps\in(t_j,t_{j+1}]$.  It is standard to check that 
\[
\frac{u_\veps(x_\veps,t_\veps)-u_\veps(x_\veps,t_j)}{t_\veps-t_j} = L(\veps, u_\veps, u_\veps(x_\veps,t_j)).
\]
or equivalently
\[
\frac{(u_\veps(x_\veps,t_\veps)-M_\veps)-(u_\veps(x_\veps,t_j)- M_\veps)}{t_\veps-t_j} = L(\veps, u_\veps-M_\veps, u_\veps(x_\veps,t_j)-M_\veps)
\]
By defining $\tilde{u}_\veps:=u_\veps-M_\veps$, we have that $\tilde{u}_\veps(x_\veps,t_\veps)=\phi(x_\veps,t_\veps)$ and $\phi> \tilde{u}_\veps$ in $\ren\times [0,\infty)\setminus (x_\veps,t_\veps)$. Let us then rewrite the scheme to obtain
\[
\begin{split}
\phi(x_\veps,t_\veps)&=\tilde{u}_\veps(x_\veps,t_\veps)\\
&= \tilde{u}_\veps(x_\veps,t_j)\left(1- (t_\veps-t_j) \frac{C_s}{s \veps^{2s}}\right) \\
&\quad+ (t_\veps-t_j) C_s \left(\sup_{|y|=1}\int_\veps^\infty \tilde{u}_\veps(x_\veps+\eta y,t_j)  \frac{\dd \eta}{\eta^{1+2s}}  + \inf_{|y|=1}\int_\veps^\infty \tilde{u}_\veps(x_\veps+\eta y,t_j)  \frac{\dd \eta}{\eta^{1+2s}}\right)\\
&<  \phi(x_\veps,t_j)\left(1- (t_\veps-t_j) \frac{C_s}{s \veps^{2s}}\right) \\
&\quad+ (t_\veps-t_j) C_s \left(\sup_{|y|=1}\int_\veps^\infty\phi(x_\veps+\eta y,t_ju)  \frac{\dd \eta}{\eta^{1+2s}}  + \inf_{|y|=1}\int_\veps^\infty \phi(x_\veps+\eta y,t_j)  \frac{\dd \eta}{\eta^{1+2s}}\right)
\end{split}
\]
that is,
\begin{align}\label{appreq}
\frac{\phi(x_\veps,t_\veps)-\phi(x_\veps,t_j)}{t_\veps-t_j}<  L(\veps, \phi, \phi(x_\veps,t_j)).
\end{align}

 Assume $\nabla \phi(x_0,t_0) >0$ (the $\nabla \phi(x_0,t_0)
<0$ case is similar). Then for $\veps_0>0$ small enough, $\nabla
\phi(x_\veps,t_j) >0$ for $\veps\leq \veps_0$, and we use
\eqref{appreq} og Lemma \ref{lem:Consistency} to find that
\begin{equation}\label{eq:almostvisc}
\partial_t\phi(x_\veps,t_\veps)\leq \ifl \phi(x_\veps,t_j ) + o_\veps(1)+ O(\tau),
\end{equation}
where $o_\veps$ depends on $\sup_{\veps\leq \veps_0}|\nabla
\phi(x_\veps,t_j)|^{-1}$ which is uniformly bounded by the above
discusion. Since $\phi$ is smooth, for every  $\eta \in [0,\infty)$,
\[
\phi\left(x_\veps \pm \frac{\nabla \phi(x_\veps)}{|\nabla \phi(x_\veps)|} \eta ,t_j\right) \stackrel{\veps\to0^+}{\longrightarrow} \phi\left(x_0 \pm \frac{\nabla \phi(x_0)}{|\nabla \phi(x_0)|} \eta ,t_0\right).\]
The dominated convergence theorem then ensures that
\[
\ifl \phi(x_\veps,t_j)\to \ifl \phi(x_0,t_0) \quad \textup{as $\veps \to 0^+$.}
\]
We thus pass to the limit in $\eqref{eq:almostvisc}$  and get the
correct viscosity subsolution inequality.

When $\nabla \phi(x_0,t_0)=0$ we have  (see proof of Lemma \ref{lem:orderop})
\[
\begin{split}
L(\veps, \phi, \phi(x_\veps,t_j)) &\leq \sup_{|y|=1}\int_{\veps}^\infty \big(\phi(x_\veps+\eta y, t_j) + \phi(x_\veps-\eta y,t_j) -2\phi(x_\veps,t_j)\big) \frac{\dd \eta}{\eta^{1+2s}}\\
&= \iflp \phi(x_\veps, t_j)+ o_\veps(1),
\end{split}
\]
 and it only remains to check 
\(
 \iflp \phi(x_\veps,t_j) \stackrel{\veps\to0^+}{\longrightarrow}  \iflp \phi(x_0,t_0).
\)
To do that, note that
\[
\begin{split}
&C_s^{-1}|\iflp \phi(x_\veps,t_j)-\iflp \phi(x_0,t_0)|\\
&\leq \sup_{|y|=1} \bigg|\int_{0}^\infty\bigg(  \Big(\phi(x_\veps+\eta y,t_j) -  \phi(x_0+\eta y,t_0) \Big)
+ \Big(\phi(x_\veps-\eta y,t_j) - \phi(x_0-\eta y,t_0)  \Big)\\
&\quad- 2\Big(\phi(x_\veps,t_j) -  \phi(x_0,t_0)\Big) \bigg)\frac{\dd \eta}{\eta^{1+2s}}\bigg|\\
 &\leq\sup_{|y|=1}  \left|\int_0^\frac{R}{4}(\ldots) \frac{\dd \eta}{\eta^{1+2s}}\right|+  \sup_{|y|=1}\left|\int_\frac{R}{4}^\infty (\ldots)\frac{\dd \eta}{\eta^{1+2s}}\right|=:I_\veps^1+I_\veps^2
\end{split}
\]
Since $\phi\in BUC(\ren\times(0,\infty))$,
\[
I_\veps^2\leq 4\, \omega_{\phi} ((x_\veps-x_0, t_j-t_0)) \int_{\frac{R}{4}}^\infty \frac{\dd \eta}{\eta^{1+2s}} \to 0 \qquad \text{as $\veps\to0^+$.}
\]
 Then note that $I^1_\veps\leq \int_0^{\frac R4} F_\veps(\eta) \dd
\eta$ where $F_\veps(\eta)=\sup_{|y|=1}\dfrac{|(\dots)|}{\eta^{1+2s}}$.
By a second order Taylor expansion and continuity of all involved functions,
$$|F_\veps(\eta)|\leq \|D^2 \phi\|_{C_\textup{b}(B_{R/2}(x_0,t_0))}
\eta^{1-2s}\quad \text{for $\veps$ small,\quad and}\quad F_\veps(\eta)\to 0  
\quad\text{pointwise as}\quad \veps\to 0.$$
By the dominated convergence theorem it follows that $I^1_\veps\to0$.
Finally, the initial condition trivially holds since
%since locally uniform convergence implies pointwise convergence, we also get
\(
u(x,0)=\lim_{\veps\to0^+} u_\veps(x,0)=u_0(x).%\qedhere
\)
\end{proof}

It remains to proof Lemma \ref{lem:Holder} and then also Corollary \ref{thm:ExistenceAPrioriViscosity_cor} is proved.

\begin{proof}[Proof of Lemma \ref{lem:Holder}]
 Since $u_0\in C^{0,\beta}(\ren)$, $\omega_{u_0}(\delta)\leq
 |u_0|_{C^{0,\beta}}\delta^\beta$, and basic facts about mollifiers
 yields
$$ \|D^ku_{0,\delta}\|_{C_{\textup{b}}}\leq
  c(\rho)|u_0|_{C^{0,\beta}}\delta^{-k+\beta}\quad\text{for}\quad
  k\in\N,$$
see e.g. \cite{Kry96} and \cite[Appendix A]{DTLi22b}. Then, by Lemma \ref{lem:unifBoundLveps}, we have
 $\|\ifleps [u_{0,\delta}]\|_{C_{\textup{b}}}\leq c(s,\rho)\delta^{-2s+\beta}$.
\end{proof}

\section{Review of basic results on the fractional heat equation}\label{sec:heat}

Here we collect some well-known results on the fractional heat equation that we will need, see e.g. \cite{BlumGet1960, DrGaVo03, BaPeSoVa14, BoSiVa17}. The one-dimensional problem we consider is
\begin{empheq}[left=\empheqlbrace]{align}
\partial_t v(x,t)+  \flx v(x,t)&=0, \hspace{-3cm} &x \in \R& ,\, t > 0 ,\label{eq:FHEQ}\\
v (x,0) &= v_0(x),  \hspace{-3cm} & x \in \R& . \label{eq:FHBC}
\end{empheq}
The fundamental solution of \eqref{eq:FHEQ} is given by
\[
P_{s}(x,t)= \mathcal{F}^{-1}(\textup{e}^{-|\xi|^{2s}t})(x)
\]
where $\mathcal{F}$ denotes the Fourier transform and $\mathcal{F}^{-1}$ its inverse. Since the Fourier symbol $\textup{e}^{-|\xi|^{2s}t}$ is a tempered distribution, it follows that
\[
P_{s}\in C^\infty(\R\times(0,\infty)).
\]
Moreover, it is well-known that
\beq\label{fht.profile}
P_{s}(x,t)=t^{-\frac{1}{2s}}F(|x|t^{-\frac{1}{2s}}),
\eeq
with a profile $F(r)$ that is a smooth and strictly decreasing function of $r>0$. We can also deduce that, for all $\tau>0$, there exist constants  $c_1,c_2>0$ depending only on $s$, such that
\begin{equation}\label{eq:FHEQBounds}
c_1\frac{t}{(t^{\frac{1}{s}}+|x|^2)^{\frac{1+2s}{2}}}\leq P_{s}(x,t)\leq c_2\frac{t}{(t^{\frac{1}{s}}+|x|^2)^{\frac{1+2s}{2}}}\qquad\text{for all $(x,t)\in \R\times[\tau,\infty)$}.
\end{equation}
\normalcolor

Once the basic properties of the fundamental solution are established, we also recall that given any $0\leq v_0\in L^1(\R)$ (actually a bigger class can be considered),  the unique (very weak) solution of \eqref{eq:FHEQ}--\eqref{eq:FHBC} is given by convolution as
\begin{equation}\label{eq:FHEQConvolution}
v(x,t)=(P_{s}(\cdot,t)\ast v_0)(x)= \int_{-\infty}^\infty P_{s}(x-y,t) v_0(y)\dd y.
\end{equation}
Actually, since it is obtained by convolution with a $C^\infty$ kernel, the solution with nonnegative $L^1$-initial data will be $C^\infty$ smooth in $\R\times(0,\infty)$. We will also need:

\begin{lemma}[Classical solutions]\label{lem:FHEQradialsol}
Let $v_0\in C^\infty_\textup{b}(\R)$ be radially symmetric and
radially nonincreasing. Then there exists a unique solution $v \in
C^\infty_\textup{b}(\R\times[0,\infty))$ of
  \eqref{eq:FHEQ}--\eqref{eq:FHBC}. % with $v_0$ as initial data.
  Moreover, $v$ is radial, radially nonincreasing, and given by \eqref{eq:FHEQConvolution}.
\end{lemma}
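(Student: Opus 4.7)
I would define $v$ directly by the convolution formula \eqref{eq:FHEQConvolution}, i.e., $v(x,t) := (P_s(\cdot,t) \ast v_0)(x)$, and then verify the listed properties one by one, isolating monotonicity as the only nontrivial point.

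\emph{Well-posedness, smoothness, and the PDE.} Since $v_0 \in L^\infty(\R)$ and $P_s(\cdot,t) \in L^1(\R)$ with $\|P_s(\cdot,t)\|_{L^1}=1$, the convolution is well-defined and $\|v(\cdot,t)\|_{C_{\textup{b}}} \le \|v_0\|_{C_{\textup{b}}}$. For spatial smoothness I would move derivatives onto $v_0$, writing $\partial_x^k v(x,t) = (P_s(\cdot,t) \ast v_0^{(k)})(x)$, which is continuous in $(x,t) \in \R \times [0,\infty)$ and bounded by $\|v_0^{(k)}\|_{C_{\textup{b}}}$. For time smoothness I would use the PDE itself: $\partial_t v = -\flx v$ is bounded because $v_0 \in C^\infty_{\textup{b}}$ ensures $\flx v \in C^\infty_{\textup{b}}$, and iterating gives smoothness of all time derivatives up to $t=0$. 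That $v$ solves \eqref{eq:FHEQ} and takes the initial datum pointwise are standard consequences of the Fourier representation $P_s(x,t) = \mathcal{F}^{-1}(e^{-|\xi|^{2s}t})$ and the approximate-identity property of $P_s(\cdot,t)$ as $t \to 0^+$.

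\emph{Evenness and monotonicity.} Evenness of $v(\cdot,t)$ follows because $P_s(\cdot,t)$ is even in $x$ (its Fourier symbol depends only on $|\xi|$) and $v_0$ is even, so a change of variables in the convolution gives $v(-x,t) = v(x,t)$. For the monotonicity, which I regard as the main point, I would compute
\[
\partial_x v(x,t) = \int_{\R} \partial_x P_s(x-y,t)\, v_0(y)\, \dd y = \int_0^\infty \partial_z P_s(z,t)\big[v_0(x-z) - v_0(x+z)\big] \dd z,
\]
after substituting $z = x-y$ and using oddness of $z \mapsto \partial_z P_s(z,t)$, which follows from evenness of $P_s$. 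From the explicit profile \eqref{fht.profile} with $F$ strictly decreasing, $\partial_z P_s(z,t) \le 0$ for $z>0$. Now for $x \ge 0$ and $z>0$, one has $|x-z| \le |x+z|$, so the radial monotonicity of $v_0$ gives $v_0(x-z) - v_0(x+z) \ge 0$. Hence the integrand is nonpositive and $\partial_x v(x,t) \le 0$ for $x \ge 0$, which together with evenness proves $v(\cdot,t)$ is radially nonincreasing.

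\emph{Uniqueness.} In the class of bounded classical solutions this is a standard maximum-principle statement for the fractional heat equation; the difference of two such solutions is bounded, satisfies \eqref{eq:FHEQ} with zero initial data, and must vanish by e.g. the representation formula or a Phragm\'en--Lindel\"of/Fourier argument as recalled in Section \ref{sec:heat}.

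\emph{Expected difficulty.} Everything except radial monotonicity is textbook material for the fractional heat equation. The main (minor) obstacle is organizing the computation above so that the symmetric-decreasing character of $P_s$ is exploited cleanly; once the integrand is rewritten as $\partial_z P_s(z,t)[v_0(x-z)-v_0(x+z)]$ on $(0,\infty)$, the sign analysis is immediate.
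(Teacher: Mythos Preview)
The paper does not actually prove this lemma: Section~\ref{sec:heat} is a review section that collects well-known results on the fractional heat equation with references to the literature, and Lemma~\ref{lem:FHEQradialsol} is stated there without proof as one of those recalled facts. Your argument is correct and supplies the details the paper omits; in particular your computation for radial monotonicity, rewriting $\partial_x v$ as $\int_0^\infty \partial_z P_s(z,t)[v_0(x-z)-v_0(x+z)]\,\dd z$ and exploiting that $P_s(\cdot,t)$ is symmetric-decreasing, is clean and exactly the kind of justification one would expect for this folklore statement.
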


Let us also recall Theorem 8.1 in \cite{BoSiVa17}.

\begin{lemma}[Global Harnack principle]\label{lem:finetailHE}
Let $v$ be the very weak solution of \eqref{eq:FHEQ}--\eqref{eq:FHBC} with initial data $v_0\in L^1(\R)$ such that $v_0\not\equiv0$ and
\[
0\leq v_0(x)\leq (1+|x|^2)^{-\frac{1+2s}{2}} \qquad \text{for all $|x|\geq R\geq 1$.}
\]
Then, for all $\tau>0$, there exist constants $k_1,k_2>0$ depending only on $s$ and $R$, such that
\[
k_1 \|v_0\|_{L^1(\R)} P_s(x,t) \leq v(x,t) \leq k_2 \|v_0\|_{L^1(\R)} P_s(x,t)\qquad\text{for all $(x,t)\in\R\times[\tau,\infty)$.}
\]
 Moreover, by \eqref{eq:FHEQBounds},  for all $\tau>0$, there exist constants $C_1,C_2>0$ depending only on $s$, $R$, and $\|v_0\|_{L^1(\R)}$, such that
\[
C_1\frac{t}{(t^{\frac{1}{s}}+|x|^2)^{\frac{1+2s}{2}}} \leq v(x,t) \leq
C_2\frac{t}{(t^{\frac{1}{s}}+|x|^2)^{\frac{1+2s}{2}}}\qquad\text{for all $(x,t)\in\R\times[\tau,\infty)$.}
\]
\end{lemma}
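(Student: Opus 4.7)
The plan is to use the explicit convolution representation \eqref{eq:FHEQConvolution},
$$ v(x,t) = \int_{\R} P_s(x-y,t)\, v_0(y)\, \dd y, $$
and combine the sharp two-sided kernel bound \eqref{eq:FHEQBounds} with the hypothesised tail decay of $v_0$. The second pair of inequalities in the statement follows immediately from the first after substituting \eqref{eq:FHEQBounds} for $P_s(x,t)$, so I reduce to proving $k_1\|v_0\|_{L^1(\R)} P_s \leq v \leq k_2 \|v_0\|_{L^1(\R)} P_s$ on $\R\times[\tau,\infty)$.

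For the upper bound I would split the convolution at $|y|=R$. On $\{|y|\leq R\}$, the elementary estimate $|x-y|^2 \leq 2|x|^2+2R^2$ together with $t\geq\tau$ gives $t^{1/s}+|x-y|^2 \leq C_{s,R,\tau}(t^{1/s}+|x|^2)$, so \eqref{eq:FHEQBounds} yields $P_s(x-y,t)\leq C\,P_s(x,t)$ and the inner contribution is bounded by $C\,P_s(x,t)\|v_0\|_{L^1(|y|\leq R)}\leq C P_s(x,t)\|v_0\|_{L^1(\R)}$. On the outer region $\{|y|>R\}$, I apply the hypothesis $v_0(y)\leq (1+|y|^2)^{-(1+2s)/2}$ together with the upper part of \eqref{eq:FHEQBounds}, reducing matters to estimating the self-convolution
$$ \int_{\R}\frac{\dd y}{(t^{1/s}+|x-y|^2)^{(1+2s)/2}(1+|y|^2)^{(1+2s)/2}}. $$
Splitting further into $\{|x-y|\leq |x|/2\}$ versus $\{|x-y|>|x|/2\}$, and using $2s>1$ so that the resulting tail integrals converge, yields a bound $C_{s,\tau}(1+|x|^2)^{-(1+2s)/2}$, which (after reinstating the $t$-factor from \eqref{eq:FHEQBounds}) is comparable to $P_s(x,t)$ on $[\tau,\infty)$. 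Converting back and absorbing the outer contribution into $\|v_0\|_{L^1(\R)} P_s(x,t)$ closes the upper estimate.

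For the lower bound, since $v_0\geq 0$ and $v_0\not\equiv 0$, I choose $R_\ast\geq R$ large enough that $\int_{|y|\leq R_\ast} v_0 \geq \tfrac{1}{2}\|v_0\|_{L^1(\R)}$; this is possible thanks to the integrable tail hypothesis on $v_0$. For $|y|\leq R_\ast$ the radial monotonicity of the profile $F$ in \eqref{fht.profile} gives $P_s(x-y,t)\geq t^{-1/(2s)}F((|x|+R_\ast)t^{-1/(2s)})$, which by \eqref{eq:FHEQBounds} and $t\geq\tau$ is $\geq c\,P_s(x,t)$ with $c$ depending only on $s, R_\ast, \tau$. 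Integrating against $v_0$ restricted to $B_{R_\ast}$ then produces the lower bound.

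The main obstacle is the uniformity of the constants in $\|v_0\|_{L^1(\R)}$: the tail bound on $v_0$ is absolute rather than proportional to $\|v_0\|_{L^1(\R)}$, so a naive estimate of the outer-region contribution to the upper bound is only bounded by a constant multiple of $P_s(x,t)$, independent of $v_0$. Recasting this as $k_2\|v_0\|_{L^1(\R)} P_s(x,t)$, and simultaneously calibrating $R_\ast$ in the lower bound so that a definite fraction of the mass is captured, requires either a two-regime analysis (distinguishing small and large $L^1$ mass) or a scaling argument exploiting the self-similar structure \eqref{fht.profile} of $P_s$; this is the delicate bookkeeping performed in \cite{BoSiVa17}.
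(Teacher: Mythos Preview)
The paper does not prove this lemma at all: it is stated in the review Section~\ref{sec:heat} as a quotation of Theorem~8.1 in \cite{BoSiVa17}, with no argument given. Your proposal therefore goes strictly beyond what the paper does, supplying an outline of the convolution--plus--kernel-bound argument behind that cited result, and then (like the paper) deferring the delicate uniformity issues to \cite{BoSiVa17}.

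Your sketch is sound as an outline, and you correctly isolate the genuine obstruction: the absolute tail bound on $v_0$ produces an outer contribution of order $C\,P_s(x,t)$ rather than $C\,\|v_0\|_{L^1}P_s(x,t)$, and your choice of $R_\ast$ in the lower bound a priori depends on the mass distribution of $v_0$, not just on $s$ and $R$. Both points prevent the constants $k_1,k_2$ from depending only on $s$ and $R$ without further work. Since the paper simply imports the result, there is nothing in it to compare your resolution against; your final paragraph already points to the right reference.
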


\section{Smooth solutions and the 1d fractional heat equation }\label{sec:radialsol}
  %The equation for radially symmetric solutions}

This section investigates different smooth solutions of \eqref{eq:EQ}--\eqref{eq:BC}.

\subsection{Radial solutions}

We will now focus on obtaining Theorem \ref{thm:ExistenceClassicalSolution}. To do so, we will demonstrate that for radially symmetric and radially nonincreasing functions, the operator $\ifl$ reduces to the classical fractional Laplacian.

\begin{proposition}\label{prop:radialOp}
Assume that $\phi\in C^{1,1}(x)\cap B(\ren)$ is radial  and radially nonincreasing, i.e.,
\[
\phi(x)=\tilde\Phi(|x|) \qquad \text{for all $x\in \ren$,}
\]
where $\tilde\Phi:[0,\infty)\to \R$ is nonincreasing.  Then
\[
\ifl \phi(x)= -\fl \Phi(|x|),
\]
where $\Phi$ is the even extension of $\tilde\Phi$ to $\R$: 
$\Phi(r)=\tilde\Phi(r)$  and $\Phi(-r)=\Phi(r)$ for $r\in[0,\infty)$.
\end{proposition}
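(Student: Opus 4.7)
The strategy is to apply the alternative characterization of $\Delta_\infty^s$ from Lemma \ref{lem:EquivalentDefIFL} case by case, and in each case use that radial monotonicity pins down both the relevant direction $\zeta$ and the extremizers in any $\sup$/$\inf$ over the unit sphere. The unifying geometric observation is that for $|y|=1$ and $x\neq 0$, the value $|x+\eta y|$ ranges over the interval $[\,|\,|x|-\eta\,|,\,|x|+\eta\,]$, and since $\tilde\Phi$ is nonincreasing, $\phi(x+\eta y)=\tilde\Phi(|x+\eta y|)$ is maximized when $|x+\eta y|$ is smallest, i.e., at $y=-x/|x|$, and minimized at $y=x/|x|$. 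Using the even extension, the extreme values are exactly $\Phi(|x|-\eta)$ and $\Phi(|x|+\eta)$.

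\textbf{Case 1: $x\neq 0$ and $\nabla\phi(x)\neq 0$.} Since $\phi$ is radial, $\nabla\phi(x)=\tilde\Phi'(|x|)\,x/|x|$, and radial monotonicity of $\tilde\Phi$ forces $\tilde\Phi'(|x|)\le 0$, so $\zeta:=\nabla\phi(x)/|\nabla\phi(x)|=-x/|x|$. Then $x+\eta\zeta=(|x|-\eta)\,x/|x|$ and $x-\eta\zeta=(|x|+\eta)\,x/|x|$, hence $\phi(x\pm\eta\zeta)=\Phi(|x|\mp\eta)$. Plugging into the first bullet of Lemma \ref{lem:EquivalentDefIFL} gives
\[
\Delta_\infty^s\phi(x)=C_s\int_0^\infty\bigl(\Phi(|x|-\eta)+\Phi(|x|+\eta)-2\Phi(|x|)\bigr)\frac{\dd\eta}{\eta^{1+2s}},
\]
which is exactly $-(-\partial_{rr}^2)^s\Phi(|x|)$ by the standard Lévy--Khintchine form of the one-dimensional fractional Laplacian.

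\textbf{Case 2: $\nabla\phi(x)=0$ (including $x=0$).} Using the second bullet of Lemma \ref{lem:EquivalentDefIFL} and the convergence of each integral (guaranteed by the $C^{1,1}(x)$ assumption with $p_x=0$), the geometric observation above gives
\[
\sup_{|y|=1}\int_0^\infty\!\!\bigl(\phi(x+\eta y)-\phi(x)\bigr)\frac{\dd\eta}{\eta^{1+2s}}=\int_0^\infty\!\!\bigl(\Phi(|x|-\eta)-\Phi(|x|)\bigr)\frac{\dd\eta}{\eta^{1+2s}},
\]
since the $\sup$ of the integrand is attained at the same direction $y=-x/|x|$ for every $\eta>0$ (and the analogous identity holds for the $\inf$ with $\Phi(|x|+\eta)$). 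When $x=0$ the integrand is itself independent of $y$, and reduces to $\tilde\Phi(\eta)-\tilde\Phi(0)$. Summing the two contributions yields the same kernel representation as in Case~1, and thus $\Delta_\infty^s\phi(x)=-(-\partial_{rr}^2)^s\Phi(|x|)$.

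\textbf{Main obstacle.} The only non-routine point is the $\nabla\phi(x)=0$, $x\neq 0$ case: one must argue that the $\sup$ and the $\inf$ over $|y|=1$ of the two integrals can be computed pointwise in $\eta$, and that the maximizing and minimizing directions are actually realized on the unit sphere. Both follow cleanly from the fact that $\phi(x+\eta y)=\tilde\Phi(|x+\eta y|)$ depends on $y$ only through $|x+\eta y|$, together with the monotonicity of $\tilde\Phi$; this lets the pointwise extremum be attained at a direction independent of $\eta$, which is exactly what one needs to exchange $\sup$/$\inf$ with integration.
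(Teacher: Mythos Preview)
Your proof is correct and follows essentially the same approach as the paper: both use Lemma~\ref{lem:EquivalentDefIFL} to split into the $\nabla\phi(x)\neq0$ and $\nabla\phi(x)=0$ cases, and in the zero-gradient case both argue that the pointwise extremum of $\phi(x+\eta y)$ over $|y|=1$ is attained in the radial direction independently of $\eta$, which allows the $\sup/\inf$ to be pulled through the integral. Your treatment is slightly more explicit about the $x=0$ subcase and about the sign of $\zeta$ (which is in any event irrelevant by the $\pm$-symmetry of the integrand), but the substance is the same.
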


% for all $r\geq0$ such that  and $\Phi$ is , i.e., $\Phi(r_1)\geq \Phi(r_2)$ if $0\leq r_1\leq r_2$.

\begin{remark}
\begin{enumerate}[{\rm (a)}]
\item When $\phi$ is radial and $\nabla \phi(x)\neq0$, a similar observation has been done in the proof of Lemma 3.1 in \cite{BjCaFi12}.
\item ``Radially nonincreasing'' is needed only when $\nabla \phi(x)=0$, but it cannot be removed in general; see the Section \ref{sec:radialOpCounter} below.
\end{enumerate}
\end{remark}

\begin{proof}[Proof of Proposition \ref{prop:radialOp}]
Assume $\nabla\phi(x)\not=0$, and let $r=|x|$. We have that
\[
\nabla \phi(x)=\frac{\Phi'(r)}{r}x, \qquad |\nabla
\phi(x)|=|\Phi'(r)|, \qquad
%\text{and} \qquad
\frac{\nabla \phi(x)}{|\nabla \phi(x)|}=\pm \frac{x}{|x|},
\]
Lemma \ref{lem:EquivalentDefIFL} then yields
\begin{equation}\label{eq:1Difltofl}
\begin{split}
  \ifl \phi(x)
  %&=C_s\int_0^\infty\left(\phi\left(x+ \eta  \frac{x}{|x|} \right)+\phi\left(x- \eta  \frac{x}{|x|}\right)-2\phi(x)\right)\frac{\dd \eta }{\eta ^{1+2s}}\\
&=C_s\int_0^\infty\left(\phi\left(x(1+\frac{\eta }{|x|})\right)+\phi\left(x(1-\frac{\eta }{|x|})\right)-2\phi(x)\right)\frac{\dd \eta }{\eta ^{1+2s}}\\
&=C_s\int_0^\infty\left(\Phi\left(|x|(1+\frac{\eta }{|x|})\right)+\Phi\left(|x|(1-\frac{\eta }{|x|})\right)-2\Phi(|x|)\right)\frac{\dd \eta }{\eta ^{1+2s}}\\
&=C_s\int_0^\infty\left(\Phi\left(r+\eta \right)+\Phi\left(r-\eta \right)-2\Phi(r)\right)\frac{\dd \eta }{\eta ^{1+2s}}\\
&= -\fl \Phi(r).
\end{split}
\end{equation}

Assume $\nabla\phi(x)=0$. Note that
\begin{align*}
\dist(0, \partial B_\eta (x)) = \dist \left(0, x- \eta \frac{x}{|x|}\right),
\end{align*}
which implies that
\[
\sup_{z\in \partial B_\eta (x)}\{\phi(z)\}=\phi\left(x- \eta \frac{x}{|x|}\right),
\]
since $\phi=\Phi(|\cdot|)$ is radially nonincreasing. Then,
\[
\begin{split}
\int_{0}^\infty\left(\phi \left(x- \eta  \frac{x}{|x|}\right)-\phi(x) \right) \frac{\dd \eta }{\eta ^{1+2s}} &\leq \sup_{|y|=1}\int_{0}^\infty \left(\phi \left(x+ \eta  y\right)-\phi(x) \right) \frac{\dd \eta }{\eta ^{1+2s}}\\
&\leq \int_{0}^\infty \left(\sup_{z\in \partial B_\eta (x)}\{\phi(z)\}-\phi(x) \right) \frac{\dd \eta }{\eta ^{1+2s}}\\
&=\int_{0}^\infty \left(\phi\left(x- \eta \frac{x}{|x|}\right)-\phi(x) \right) \frac{\dd \eta }{\eta ^{1+2s}},
\end{split}
\]
so that
\[
\sup_{|y|=1}\int_{0}^\infty \left(\phi \left(x+ \eta  y\right)-\phi(x) \right) \frac{\dd \eta }{\eta ^{1+2s}}=\int_{0}^\infty \left(\phi\left(x- \eta \frac{x}{|x|}\right)-\phi(x) \right) \frac{\dd \eta }{\eta ^{1+2s}}.
\]
In the same way,
\[
\inf_{|y|=1}\int_{0}^\infty \left(\phi \left(x+ \eta  y\right)-\phi(x) \right) \frac{\dd \eta }{\eta ^{1+2s}}=\int_{0}^\infty \left(\phi\left(x+ \eta \frac{x}{|x|}\right)-\phi(x) \right) \frac{\dd \eta }{\eta ^{1+2s}}.
\]
Finally, Lemma \ref{lem:EquivalentDefIFL} and the argument in \eqref{eq:1Difltofl} gives the result.
\end{proof}

\begin{proof}[Proof of Theorem \ref{thm:ExistenceClassicalSolution}]
 Define $v_0(r):=U_0(|x|)$ for $x\in \ren$ and $r=|x|$, then $v_0\in
C_\textup{b}^\infty(\R)$ is radial and radially nonincreasing. Let $v$
be the corresponding solution of \eqref{eq:FHEQ}--\eqref{eq:FHBC}. % with $v_0$ as initial data. Then, b
By Lemma \ref{lem:FHEQradialsol}, $v$ is radial, radially
nonincreasing, and $C_\textup{b}^\infty$ smooth. Then by Proposition
\ref{prop:radialOp}, $u(x,t):=v(|x|,t)$ is a classical solution of
\eqref{eq:EQ}--\eqref{eq:BC}.
\end{proof}

\subsection{Counterexample for functions not being radially nonincreasing}\label{sec:radialOpCounter}

We show now an example of a function $\phi$ not satisfying the radially nonincreasing assumption in the zero gradient case, and such that the operators $\ifl$ and $-\fl$ do not coincide. For $R>0$, consider the radial function $\phi:\R^2\to \R$ given by $\phi(x) = \indik_{B_{R+1}\setminus B_{R-1}}(x)$, see  Figure \ref{fig:radialcounter}.

\begin{figure}[h!]
\center
\includegraphics[width=0.4\textwidth]{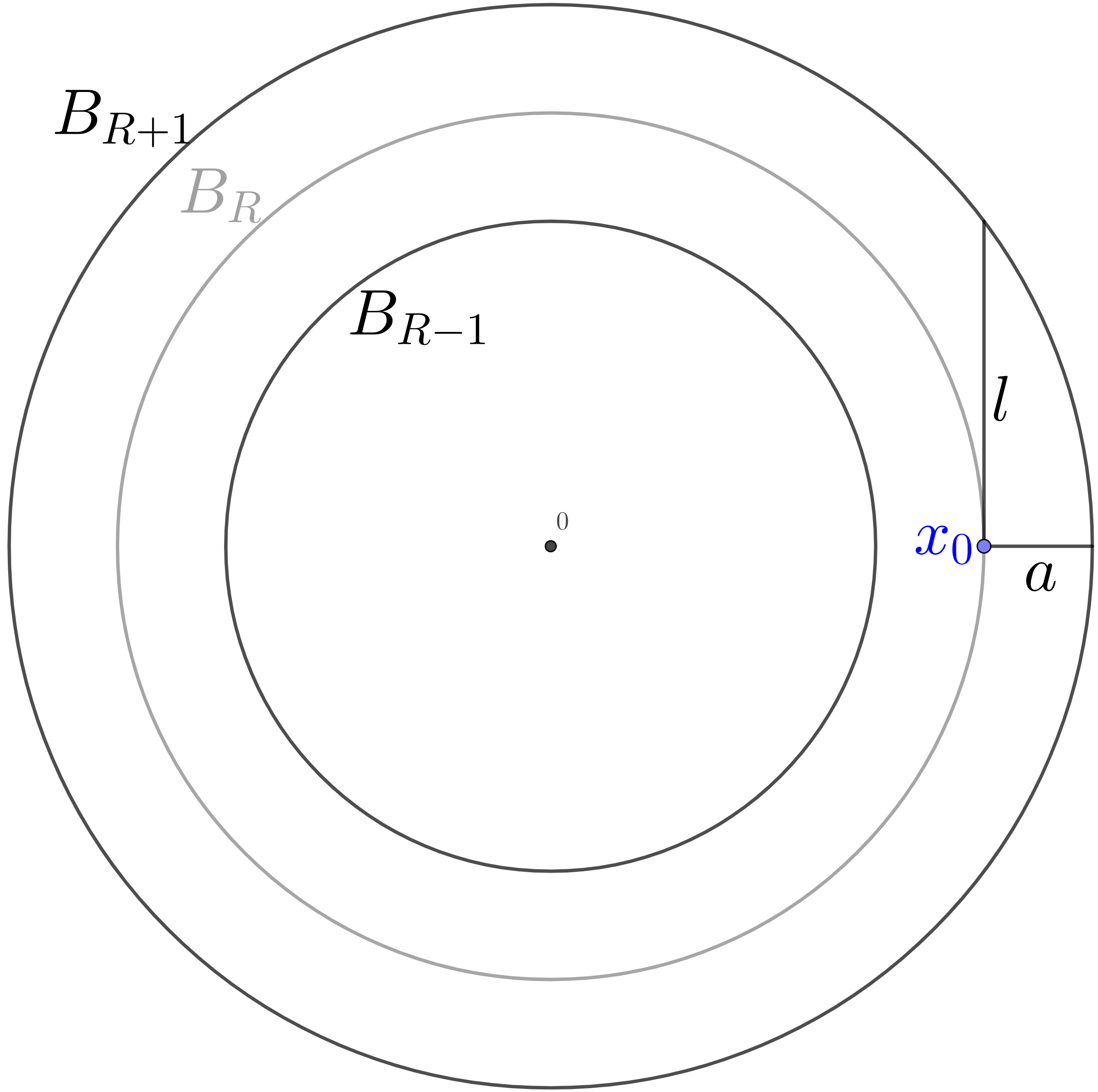}
\caption{Characteristic set for the radial counterexample.}
\label{fig:radialcounter}
\end{figure}

We note that at $x_0=(r,0)$, we have that $\phi\in C^{1,1}(x_0)\cap B(\ren)$ and $\nabla\phi(x_0)=0$. Moreover, we have $a=1$ and $R^2+l^2=(R+1)^2$, so that $l=\sqrt{2R+1}$.
We denote by $e_1=(1,0)$ and $e_2=(0,1)$. Then
\[
\begin{split}
\sup_{|y|=1} \int_0^\infty \left(\phi(x_0+\eta y)-\phi(x_0)\right) \frac{\dd \eta}{\eta^{1+2s}}&\geq \int_0^\infty \left(\phi(x_0+\eta e_2)-\phi(x_0)\right)\frac{\dd \eta}{\eta^{1+2s}}\\
&=-\int_l^\infty \frac{\dd \eta}{\eta^{1+2s}}=-\frac{1}{2s} \frac{1}{(2R+1)^s}
\end{split}
\]
and
\[
\begin{split}
\inf_{|y|=1} \int_0^\infty \left(\phi(x_0+\eta y)-\phi(x_0)\right) \frac{\dd \eta}{\eta^{1+2s}}&= \int_0^\infty \left(\phi(x_0+\eta e_1)-\phi(x_0)\right) \frac{\dd \eta}{\eta^{1+2s}}=-\int_1^\infty \frac{\dd \eta}{\eta^{1+2s}}=-\frac{1}{2s}.
\end{split}
\]
By Lemma \ref{lem:EquivalentDefIFL},
\[
\ifl \phi(x)\geq -\frac{C_s}{2s}\left(\frac{1}{(2R+1)^s}+1\right).
\]
On the other hand, we define $\Phi:\R\to \R$ by $\Phi(r):=\phi(|x|)$ when $r=|x|$ and $\Phi(-r):=\Phi(r)$, and let $r_0=|x_0|$.
\[
\begin{split}
-(-\partial_{rr}^2)^s\Phi(r_0)&= C_s\int_0^\infty \big(\Phi(r_0+\eta)+\Phi(r_0-\eta)-2\Phi(r_0)\big) \frac{\dd \eta}{\eta^{1+2s}}\\
&=C_s\int_0^\infty \left(\phi\left(x_0+\eta\frac{x_0}{|x_0|}\right)-\phi(x_0)\right) \frac{\dd \eta}{\eta^{1+2s}}+C_s\int_0^\infty \left(\phi\left(x_0-\eta\frac{x_0}{|x_0|}\right)-\phi(x_0)\right)\frac{\dd \eta}{\eta^{1+2s}}\\
&=-C_s \int_1^\infty \frac{\dd \eta}{\eta^{1+2s}}-C_s \int_1^{2R-1} \frac{\dd \eta}{\eta^{1+2s}}-C_s \int_{2R+1}^\infty \frac{\dd \eta}{\eta^{1+2s}}\\
&=-\frac{C_s}{2s}\left(2+\frac{1}{(2R+1)^{2s}}-\frac{1}{(2R-1)^{2s}}\right).
\end{split}
\]
Finally, taking $R$ big enough, we get
\[
-\ifl\phi(x_0)-(-\partial_{rr})^s\Phi(r_0)\leq -\frac{C_s}{2s}\left(1+\frac{1}{(2R+1)^{2s}}-\frac{1}{(2R-1)^{2s}}-\frac{1}{(2R+1)^s}\right)<0.
\]
Thus, the operators cannot coincide.

\subsection{Another example of smooth solutions}\label{sec:OtherExamplesOfSmoothSolutions}
We present here another example of functions for which $\ifl$ reduces to a one-dimensional fractional Laplacian. As before, this allows to produce smooth solutions of \eqref{eq:EQ}--\eqref{eq:BC}. We will adopt the notation $x=(x_1,\ldots, x_n)\in \ren$.

\begin{lemma}\label{lem:OtherExamplesSmoothSolutions}
Assume that $\Phi\in C^2_{\textup{b}}(\R)$ is nondecreasing, and let $\phi\in C^2_{\textup{b}}(\ren)$ be defined as
\[
\phi(x):=\Phi(x_1).
\] 
Then
\[
\ifl \phi(x)= -(-\partial_{x_1x_1}^2)^s \Phi(x_1).
\]
\end{lemma}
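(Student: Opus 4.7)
The plan is to reduce both gradient-nonzero and gradient-zero cases to the one-dimensional fractional Laplacian of $\Phi$, exploiting that $\phi(x)=\Phi(x_1)$ depends only on the first coordinate and that $\Phi$ is nondecreasing. The proof should follow closely the structure of Proposition \ref{prop:radialOp}.

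First I would compute $\nabla\phi(x)=(\Phi'(x_1),0,\ldots,0)$ and split into two cases according to Lemma \ref{lem:EquivalentDefIFL}. In the case $\Phi'(x_1)\neq 0$, the unit vector $\zeta=\nabla\phi(x)/|\nabla\phi(x)|=\pm e_1$ (it equals $+e_1$ since $\Phi$ is nondecreasing, so $\Phi'(x_1)>0$). Lemma \ref{lem:EquivalentDefIFL} then gives
\[
\ifl\phi(x)=C_s\int_0^\infty\bigl(\Phi(x_1+\eta)+\Phi(x_1-\eta)-2\Phi(x_1)\bigr)\frac{\dd\eta}{\eta^{1+2s}}=-(-\partial_{x_1x_1}^2)^s\Phi(x_1),
\]
by the very definition of the one-dimensional fractional Laplacian.

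Next I would handle the case $\Phi'(x_1)=0$, which is the part requiring the monotonicity assumption. Here the second formula of Lemma \ref{lem:EquivalentDefIFL} applies. The key observation is that for each fixed $\eta>0$, the quantity $\Phi(x_1+\eta y_1)$ is nondecreasing in $y_1\in[-1,1]$, so it is maximized over $\{|y|=1\}$ at $y=e_1$ (where $y_1=1$). Hence pointwise in $\eta$ the integrand is maximized at $y=e_1$, which yields
\[
\sup_{|y|=1}\int_0^\infty\bigl(\phi(x+\eta y)-\phi(x)\bigr)\frac{\dd\eta}{\eta^{1+2s}}=\int_0^\infty\bigl(\Phi(x_1+\eta)-\Phi(x_1)\bigr)\frac{\dd\eta}{\eta^{1+2s}}.
\]
An analogous monotonicity argument (picking the direction that makes $\phi(x-\eta y)$ as small as possible, again $y=e_1$) gives
\[
\inf_{|y|=1}\int_0^\infty\bigl(\phi(x-\eta y)-\phi(x)\bigr)\frac{\dd\eta}{\eta^{1+2s}}=\int_0^\infty\bigl(\Phi(x_1-\eta)-\Phi(x_1)\bigr)\frac{\dd\eta}{\eta^{1+2s}}.
\]
Adding the two and invoking the one-dimensional fractional Laplacian produces the desired identity.

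The main subtlety is to verify that these separate $\sup$ and $\inf$ integrals are well-defined individually. Near $\eta=0$, since $\Phi'(x_1)=0$ and $\Phi\in C^2_{\textup{b}}$, a Taylor expansion yields $|\Phi(x_1\pm\eta)-\Phi(x_1)|\leq \tfrac12\|\Phi''\|_{C_\textup{b}}\eta^2$, which is integrable against $\eta^{-1-2s}\dd\eta$ for $s<1$; for $\eta$ large, boundedness of $\Phi$ gives integrability. Once that is checked the pointwise monotonicity of $y_1\mapsto\Phi(x_1+\eta y_1)$ directly transfers to the integrals. The remaining step is a routine combination yielding the symmetric integrand $\Phi(x_1+\eta)+\Phi(x_1-\eta)-2\Phi(x_1)$, and hence $-(-\partial_{x_1x_1}^2)^s\Phi(x_1)$.
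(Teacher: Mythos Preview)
Your proof is correct and follows essentially the same approach as the paper: both split into the cases $\Phi'(x_1)\neq 0$ and $\Phi'(x_1)=0$ via Lemma \ref{lem:EquivalentDefIFL}, handle the first case immediately with $\zeta=e_1$, and use the monotonicity of $\Phi$ to identify the $\sup$ and $\inf$ at $y=e_1$ in the second case. You in fact supply more detail than the paper, which simply writes ``it is clear that'' for the zero-gradient step, and you additionally verify integrability of the one-sided integrals.
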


\begin{remark}
We could also take $\Phi\in C^2_{\textup{b}}(\R)$ and nonincreasing in the above result.
\end{remark}

\begin{proof}[Proof of Lemma \ref{lem:OtherExamplesSmoothSolutions}]
Note that $\nabla \phi(x)=\Phi'(x_1)e_1$ and $\Phi'(x_1)\geq0$. On one hand, if $\Phi'(x_1)=0$, then it is clear that
\[
\begin{split}
\ifl \phi(x)
&=C_s\sup_{|y|=1} \int_{0}^\infty\big(\phi(x+ \eta y)-\phi(x)\big) \frac{\dd \eta}{\eta^{1+2s}} + C_s\inf_{|y|=1} \int_{0}^\infty\big(\phi(x-\eta y)-\phi(x)\big) \frac{\dd \eta}{\eta^{1+2s}}\\
&=C_s\int_{0}^\infty\big(\phi(x+ \eta e_1)-\phi(x)\big) \frac{\dd \eta}{\eta^{1+2s}} + C_s\int_{0}^\infty\big(\phi(x-\eta e_1)-\phi(x)\big) \frac{\dd \eta}{\eta^{1+2s}}\\
&=C_s\int_{0}^\infty\big(\Phi(x_1+ \eta)+\Phi(x_1-\eta)-2\Phi(x_1)\big) \frac{\dd \eta}{\eta^{1+2s}}\\
&=-(-\partial_{x_1x_1}^2)^s \Phi(x_1).
\end{split}
\]
On the other hand, if $\Phi'(x_1)>0$, then $\zeta=\nabla\phi(x)/|\nabla\phi(x)|=e_1$ (cf. Lemma \ref{lem:EquivalentDefIFL}) and
\[
\begin{split}
\ifl \phi(x)
&=C_s\int_0^\infty\left(\phi(x+\eta e_1)+\phi(x-\eta e_1)-2\phi(x)\right)\frac{\dd \eta }{\eta ^{1+2s}}\\
&=-(-\partial_{x_1x_1}^2)^s \Phi(x_1). \qedhere
\end{split}
\]
\end{proof}

\section{Comparison and local truncation errors}\label{sec:compsmooth}

We are not able to prove comparison (neither uniqueness) for the family of viscosity solutions constructed in Section \ref{sec:existence}.
However, we are able to compare any constructed viscosity solution with any classical solution.

The argument is based on the fact that for classical solutions we can
get  full  convergence of the scheme
\eqref{eq:EQD}--\eqref{eq:BCD} (and not just  compactness and
convergence  up to a subsequence). Then we can inherit the
comparison result of the scheme to the limit solutions. 

\subsection{Comparison and convergence estimates under regularity assumptions}

\begin{proposition}\label{prop:NumMethSmooth_new}
Assume \eqref{eq:CFL} and $u_0\in BUC(\ren)$. Let $u,v\in  C^2_\textup{b}
(\ren\times[0,\infty))$ be respective classical sub- and supersolutions of
  \eqref{eq:EQ}--\eqref{eq:BC}. Then:
\begin{enumerate}[{\rm (a)}]
\item Let $U_\veps,V_\veps$ respective super- and subsolutions of the scheme \eqref{eq:EQD}--\eqref{eq:BCD}. Then, for all $T<\infty$,
\[
u+o_\veps(1) + O(\tau) \leq U_\veps\qquad\text{and}\qquad v+o_\veps(1) + O(\tau)\geq V_\veps,\qquad \text{uniformly in $\ren\times \{\tau\N\cup 0\}$.}
\]

\item Let $U_\veps$ be a solution of the scheme \eqref{eq:EQD}--\eqref{eq:BCD}. Then, for all $T<\infty$,
\[
u+o_\veps(1)+O(\tau) \leq U_\veps\leq v+o_\veps(1)+O(\tau),\qquad \text{uniformly in $\ren\times \{\tau\N\cup 0\}$.}
\]

\end{enumerate}
\end{proposition}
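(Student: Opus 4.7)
The plan is to inherit the comparison principle of the semidiscrete scheme (Proposition \ref{prop:propscheme}\eqref{prop:propscheme-item3}) by plugging the classical sub- and supersolutions of \eqref{eq:EQ} into the scheme, quantifying their defect as almost scheme sub/supersolutions, and correcting them into exact scheme sub/supersolutions by a constant-in-space, linear-in-time perturbation. Once this correction is built, the scheme comparison closes the estimate in a quantitative form, and the viscosity-solution machinery is entirely bypassed.

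Concretely, for the first inequality of part (a), a Taylor expansion in time gives $u(x,t_{j+1}) - u(x,t_j) = \tau\,\partial_t u(x,t_j) + O(\tau^2)$ with a remainder controlled by $\|\partial_{tt}u\|_{C_{\textup{b}}}$. Combined with $\partial_t u \leq \ifl u$ (the classical subsolution property) and a uniform-in-$x$ consistency bound $\ifl u \leq \ifleps u + o_\veps(1)$ discussed in the next paragraph, this yields
\begin{equation*}
u(x,t_{j+1}) - u(x,t_j) \leq \tau\,\ifleps u(\cdot,t_j)(x) + \tau K, \qquad K = o_\veps(1) + O(\tau).
\end{equation*}
Setting $\hat u^j(x) := u(x,t_j) - Kt_j$ and exploiting the invariance of $\ifleps$ under addition of spatial constants, one checks that $\hat u^j$ is an \emph{exact} scheme subsolution with $\hat u^0 = u_0 \leq U_\veps^0$. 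Proposition \ref{prop:propscheme}\eqref{prop:propscheme-item3} then gives $\hat u^j \leq U_\veps^j$ on $\ren\times\{\tau\N\cup 0\}$, i.e.\ $u(x,t_j) \leq U_\veps^j(x) + Kt_j \leq U_\veps^j(x) + KT$ for $t_j\leq T$, which is the claimed bound. The supersolution inequality is proved symmetrically with $v$ and $V_\veps$, and part (b) follows by applying part (a) twice, since a scheme solution $U_\veps$ is simultaneously a scheme sub- and supersolution.

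The main obstacle is the required \emph{uniform} consistency $\sup_{x\in\ren}|\ifl u(x,t)-\ifleps u(\cdot,t)(x)|=o_\veps(1)$. Lemma \ref{lem:Consistency} furnishes only a pointwise statement whose rate degenerates through the factor $|\nabla u(x)|^{-1}$, which is unsafe for $u\in C^2_{\textup{b}}$ whose gradient may vanish. To bypass this I would adapt the trick used in the proof of Lemma \ref{lem:unifBoundLveps}: add and subtract the gradient term $\eta\,\nabla u(x)\cdot(y-\tilde y)\indik_{\{0<\eta<1\}}$ inside the integrands of both $\ifl u(x)$ and $\ifleps u(x)$. Since $\sup_{|y|=1}\inf_{|\tilde y|=1}(y-\tilde y)\cdot p = 0$ for every $p\in\ren$, this modification leaves both sup-inf values unchanged, while the adjusted integrand is controlled near the origin by the second-order Taylor remainder $\|D^2u\|_{C_{\textup{b}}}\eta^2$, which is integrable against $\eta^{-1-2s}$ since $s<1$. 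The difference between the full integral and its $\veps$-truncation is then bounded by $c(s)\|\nabla u\|_{C_{\textup{b}}}^{2-2s}\|D^2u\|_{C_{\textup{b}}}^{2s-1}\veps^{\alpha}$ for some $\alpha>0$, uniformly in $x$ and vanishing as $\veps\to 0$, which is exactly the uniform consistency needed to complete the argument.
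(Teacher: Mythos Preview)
Your overall strategy --- plug the classical subsolution $u$ into the scheme, bound the local truncation error by $K=o_\veps(1)+O(\tau)$, then propagate --- is exactly what the paper does. You package it as ``subtract $Kt_j$ to make $\hat u^j=u(\cdot,t_j)-Kt_j$ an exact scheme subsolution, then invoke Proposition~\ref{prop:propscheme}\eqref{prop:propscheme-item3}'', while the paper iterates directly on the error $e^j:=u(\cdot,t_j)-U_\veps^j$ using the scheme's monotonicity by hand; these are equivalent formulations of the same argument.

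You are right that uniform-in-$x$ consistency $\sup_x|\ifl u(x,t)-\ifleps[u(\cdot,t)](x)|=o_\veps(1)$ is the delicate point --- the paper simply asserts it (``from Lemma~\ref{lem:Consistency} \ldots\ with uniform bounds in $t_j$ and $x$'') without addressing the $|\nabla u(x)|^{-1}$ dependence stated in that lemma. However, your proposed workaround does not close the gap. The claim that subtracting $\eta\,\nabla u(x)\cdot(y-\tilde y)\indik_{\{0<\eta<1\}}$ inside the integrand ``leaves both sup-inf values unchanged'' is incorrect: the identity $\sup_{|y|=1}\inf_{|\tilde y|=1}p\cdot(y-\tilde y)=0$ says only that the gradient term \emph{by itself} has sup-inf value zero, but $\sup\inf$ is not additive, so inserting a zero-sup-inf term changes the value. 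Concretely, writing $\ifleps/C_s=\sup_y I_y+\inf_y I_y$ with $I_y=\int_\veps^\infty(\phi(x+\eta y)-\phi(x))\,\eta^{-1-2s}\dd\eta$ and $\tilde I_y:=I_y-C(\veps)\nabla\phi(x)\cdot y$, one has $\sup_y\tilde I_y+\inf_y\tilde I_y\neq\sup_y I_y+\inf_y I_y$ in general (take a point with small $|\nabla\phi|$ and nonzero transverse Hessian). The decomposition $I_y=C(\veps)\nabla\phi(x)\cdot y+\tilde I_y$ with $|\tilde I_y|\leq M$ \emph{does} give $|\sup_y I_y+\inf_y I_y|\leq 2M$, which is how Lemma~\ref{lem:unifBoundLveps} works, but it does not yield a uniform $o_\veps(1)$ bound on $\ifl-\ifleps$. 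So this step remains open in your argument --- and, as written, in the paper's as well.
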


We immediately get:

\begin{corollary}\label{cor:NumMethSmooth}
Assume \eqref{eq:CFL}. Let $u\in  C^2_\textup{b}
(\ren\times[0,\infty))$ be a classical solution of \eqref{eq:EQ}--\eqref{eq:BC}, and $U_\veps$ be a solution of the scheme \eqref{eq:EQD}--\eqref{eq:BCD},  both with initial data $u_0$. Then, for all $T<\infty$,
\[
\max_{t_j\leq T}\|u(\cdot, t_j)- U_\veps(\cdot,t_j)\|_{C_\textup{b}(\ren)} = o_\veps(1) + O(\tau).
\]

\end{corollary}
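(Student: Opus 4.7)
The plan is to apply Proposition~\ref{prop:NumMethSmooth_new}(b) directly, using the given classical solution $u$ simultaneously in the role of the classical subsolution and the classical supersolution. Since $u\in C^2_\textup{b}(\ren\times[0,\infty))$ solves \eqref{eq:EQ}--\eqref{eq:BC} in the classical sense, it is in particular both a classical subsolution and a classical supersolution of the same problem with initial data $u_0$. The scheme solution $U_\veps$ is assumed to start from the same initial data $u_0$, so the hypotheses of part~(b) of the proposition are satisfied with the choice $v=u$.

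Applying Proposition~\ref{prop:NumMethSmooth_new}(b) on the time horizon $[0,T]$ therefore yields the two-sided bound
\[
u + o_\veps(1) + O(\tau) \;\leq\; U_\veps \;\leq\; u + o_\veps(1) + O(\tau) \qquad \text{uniformly in } \ren\times\{\tau\N\cup 0\},
\]
where the $o_\veps(1)$ and $O(\tau)$ terms depend only on $T$ and on bounds for the derivatives of the (fixed) function $u$ that enter the consistency estimate of Lemma~\ref{lem:Consistency} and the time-discretization error. Rearranging and taking absolute values gives
\[
\|u(\cdot,t_j)-U_\veps(\cdot,t_j)\|_{C_\textup{b}(\ren)} \;\leq\; o_\veps(1)+O(\tau)
\]
for every $t_j\in\tau\N\cap[0,T]$, with a bound independent of $t_j$. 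Taking the maximum over $t_j\leq T$ gives the stated estimate.

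There is no real obstacle here: the corollary is just the specialization of Proposition~\ref{prop:NumMethSmooth_new}(b) to the case where the sub- and supersolution coincide, which is exactly the situation of a classical solution. All the work — combining the monotonicity and comparison principle of the scheme (Proposition~\ref{prop:propscheme}) with the consistency estimate (Lemma~\ref{lem:Consistency}) to propagate the local truncation error into a global error under the CFL condition \eqref{eq:CFL} — has already been done in the proof of the proposition.
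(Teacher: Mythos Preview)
Your proposal is correct and matches the paper's approach exactly: the paper simply states ``We immediately get'' before the corollary, treating it as the specialization of Proposition~\ref{prop:NumMethSmooth_new}(b) with the classical solution playing both the role of sub- and supersolution. Your write-up is just a (correct) elaboration of this one-line deduction.
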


\begin{proof}[Proof of Proposition \ref{prop:NumMethSmooth_new}]
\noindent(a) Define the local truncation error,
\begin{equation}\label{eq:LocalTruncationError}
(R_\veps)^j(x):= \frac{u(x, t_j+\tau)-u(x, t_j)}{\tau}- \ifleps[u(\cdot, t_j)](x).
\end{equation}
Clearly, since $u$ is a classical subsolution of \eqref{eq:EQ} we have (from Lemma \ref{lem:Consistency})
\[
\begin{split}
(R_\veps)^j(x)&\leq\left(\frac{u(x, t_j+\tau)-u(x, t_j)}{\tau} - \partial_t u(x,t_j)\right)- \big(\ifleps[u(\cdot, t_j)](x)- \ifl [u(\cdot, t_j)](x)\big)\\
&\leq O(\tau)+ o_\veps(1)
\end{split}
\]
with uniform bounds in $t_j$ and $x$.

Define now $e^j(x)=u(x,t_j)-(U_\veps)^j(x)=u(x,t_j)-U_\veps(x,t_j)$. By \eqref{eq:EQD} and \eqref{eq:LocalTruncationError}, we get
\[
\begin{split}
e^{j+1}(x)&=u(x,t_{j+1})-(U_\veps)^{j+1}(x)\\
%&\leq \tau(R_\veps)^j(x)+\tau \ifleps[u(\cdot, t_j)](x)+u(x,t_{j})-\big(\tau\ifleps[(U_\veps)^j](x)+(U_\veps)^j(x)\big)\\
&\leq e^{j}(x) + \tau \big( \ifleps[u(\cdot, t_j)](x) -  \ifleps[(U_\veps)^j](x) \big) + \tau (R_\veps)^j(x)\\
&=  e^j(x) (1- \tau\frac{C_s}{s \veps^{2s}}) + \tau C_s \left(\sup_{|y|=1}\int_\veps^\infty u(x+\eta y,t_j)  \frac{\dd \eta }{\eta ^{1+2s}} - \sup_{|y|=1}\int_\veps^\infty (U_\veps)^j(x+\eta y)  \frac{\dd \eta }{\eta ^{1+2s}} \right) \\
& \quad+  \tau C_s \left(\inf_{|y|=1}\int_\veps^\infty u(x+\eta y,t_j)  \frac{\dd \eta }{\eta ^{1+2s}} - \inf_{|y|=1}\int_\veps^\infty (U_\veps)^j(x+\eta y)  \frac{\dd \eta }{\eta ^{1+2s}} \right) +\tau (R_\veps)^j(x)\\
&\leq e^j(x) (1- \tau \frac{C_s}{s \veps^{2s}}) + 2\tau C_s \sup_{|y|=1}\int_\veps^\infty e^j(x+\eta y) \frac{\dd \eta }{\eta ^{1+2s}} +\tau (R_\veps)^j(x)\\
&\leq \sup_{x\in\ren}e^j(x) (1- \tau \frac{C_s}{s \veps^{2s}}) + 2\tau C_s \sup_{|y|=1}\int_\veps^\infty \sup_{x\in\ren}e^j(x+\eta y) \frac{\dd \eta }{\eta ^{1+2s}} +\tau \sup_{t_j\leq T}\sup_{x\in\ren}(R_\veps)^j(x)\\
&= \sup_{x\in\ren}e^j(x) (1- \tau \frac{C_s}{s \veps^{2s}}) + \tau \frac{C_s}{s \veps^{2s}}\sup_{x\in\ren}e^j(x)+\tau \sup_{t_j\leq T}\sup_{x\in\ren}(R_\veps)^j(x)\\
&= \sup_{x\in\ren}e^j(x)+\tau \sup_{t_j\leq T}\sup_{x\in\ren}(R_\veps)^j(x).
\end{split}
\]
I.e.,
$$
\sup_{x\in\ren}e^{j+1}(x)\leq \sup_{x\in\ren}e^j(x)+\tau \sup_{t_j\leq T}\sup_{x\in\ren}(R_\veps)^j(x).
$$
Iterating, we obtain
\[
\begin{split}
\sup_{x\in\ren}e^{j}(x)&\leq \sup_{x\in\ren}e^0(x)+j\tau\sup_{t_j\leq T}\sup_{x\in\ren}(R_\veps)^j(x)\\
&\leq  \sup_{x\in\ren}\big(u(x,0)-(U_\veps)^0(x)\big)+T\big(O(\tau)+ o_\veps(1)\big)\\
&\leq 0+O(\tau)+ o_\veps(1).
\end{split}
\]

By changing the roles of $u,U_\veps$ with $-v,-V_\veps$, we obtain the other inequality in a similar way.

\medskip
\noindent(b) Follows directly from part (a). 
\end{proof}

%%%%%%%%%%%%%%%%%%%%%%%%%%%%%%%%%%%%%%%%%%%%%%%%%%%%%%%%%%%%%%%
\subsection{Comparison for classical sub- and supersolutions}

In order to continue, we note that Proposition \ref{prop:NumMethSmooth_new} and Corollary \ref{cor:NumMethSmooth} hold exactly as before with the time interpolant $u_\veps$ replacing $U_\veps$ (cf. the proof of Corollary \ref{cor:propscheme}).

\begin{proof}[Proof of Theorem \ref{thm:comparisonsmooth}]
The proof is similar for $\overline{u},\underline{u}$, and we only
provide it for $\underline{u}$. Since $u$ is a constructed
viscosity solution in the sense of Theorem \ref{thm:ExistenceAPrioriViscosity}, by Proposition \ref{prop:SemiDiscreteCompactness} there is a sequence 
$u_{\veps_j}\in BUC(\ren\times[0,\infty))$ of
  time-interpolated  solutions of \eqref{eq:EQD}--\eqref{eq:BCD}
  with initial condition $u_0$ such that
\[
u_{\veps_j} \to  u \qquad \textup{locally uniformly in $\ren\times[0,\infty)$ as $\veps_j\to 0^+$.}
\]
Then by taking the limit as $\veps_j\to 0^+$ in Proposition \ref{prop:NumMethSmooth_new}(b), we get $\underline{u}\leq u$.
\end{proof}

\section{Global Harnack principle}\label{sec:GHP}

The proof of Theorem \ref{thm:GlobalHarnackPrinciple} is based on the relation between our problem and the smooth solutions of the fractional heat equation, the properties of smooth solutions in 1D for the fractional heat equation presented in the review Section~\ref{sec:heat},  and the comparison principle of Theorem \ref{thm:comparisonsmooth} for viscosity and classical solutions.

\begin{proof}[Proof of Theorem \ref{thm:GlobalHarnackPrinciple}]
A key point of the proof is the fact that  if $v$ is a smooth, radial, and radially nonincreasing solution of the fractional heat
equation in one dimension, then $u(x,t):=v(|x|,t)$ is a solution of
\eqref{eq:EQ}--\eqref{eq:BC}. See Theorem \ref{thm:ExistenceClassicalSolution}.

\smallskip
\noindent\textbf{1)} \emph{Upper bound.} Let $\overline{u}_0:\ren\to \R_+$ be such that (i) $\overline{u}_0(x)=(1+|x|^2)^{-\frac{1+2s}{2}}$ if $|x|\geq R+1$; (ii) $\overline{u}_0$ is radially symmetric and radially nonincreasing; (iii) $\overline{u}_0\in C^\infty_\textup{b}(\ren)$; and (iv) $u_0\leq \overline{u}_0$ in $\ren$. Consult Figure \ref{fig:upperbound}.

Moreover, let $\overline{v}_0:\R\to \R$ be defined by
$\overline{v}_0(r):=\overline{u}_0(|x|)$ with $r=|x|$ and
$\overline{v}_0(-r):=\overline{v}_0(r)$. Clearly, $\overline{v}_0\in
C^\infty_\textup{b}(\R)$ is radially symmetric and radially
nonincreasing. Let $\overline{v}$ be the corresponding solution of the
fractional heat equation \eqref{eq:FHEQ}--\eqref{eq:FHBC} and define
$\overline{u}(x,t)=\overline{v}(|x|,t)$. By %the proof of
 Theorem \ref{thm:ExistenceClassicalSolution}, $\overline{u}\in C^\infty_\textup{b}(\ren\times[0,\infty))$ is a classical solution of \eqref{eq:EQ}--\eqref{eq:BC}. Moreover, $\overline{u}$ is radial and radially nonincreasing. Since  $\overline{u}_0(x)=(1+|x|^2)^{-\frac{1+2s}{2}}$ if $|x|\geq R+1$, then $\overline{v}_0(r)=(1+|r|^2)^{-\frac{1+2s}{2}}$ if $|r|\geq R+1$, so that, by Lemma \ref{lem:finetailHE}, for all $t>\tau$ we have
\[
\overline{u}(x,t)=\overline{v}(|x|,t) \leq k_2\|\overline{v}_0\|_{L^1(\R)}P_s(|x|,t)\leq C_1 \frac{t}{(t^{\frac{1}{s}}+|x|^2)^{\frac{1+2s}{2}}}
\]
Finally, since $\overline{u}\in C^\infty_\textup{b}(\ren\times[0,\infty))$ is a classical solution of \eqref{eq:EQ}--\eqref{eq:BC} and $u_0\leq \overline{u}_0$ we have, by Theorem \ref{thm:comparisonsmooth}, that $u(x,t) \leq \overline{u}(x,t)$.

\begin{figure}[h!]
\center
\includegraphics[width=0.5\textwidth]{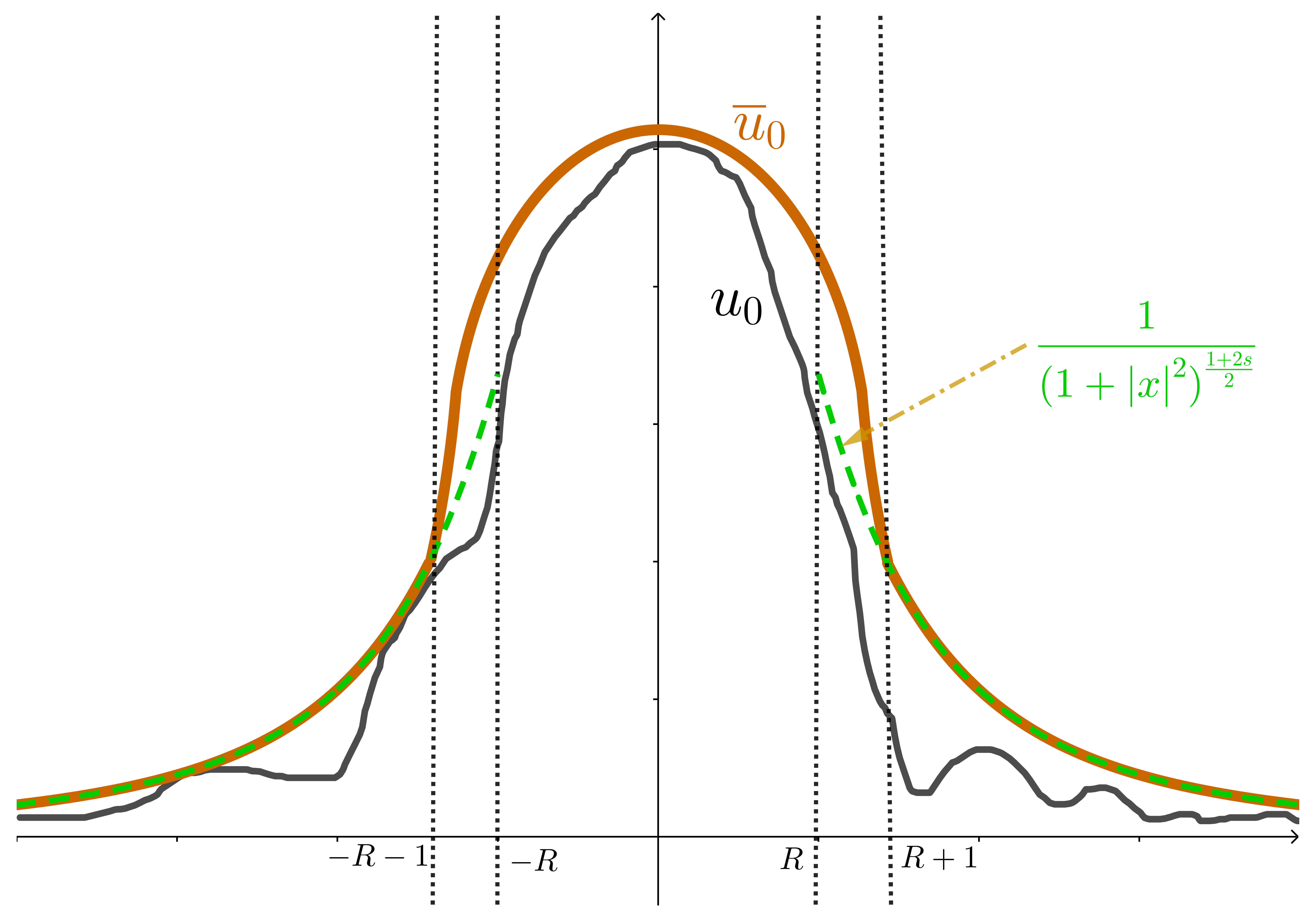}
\caption{Upper bound for $u_0$ in the proof of Theorem \ref{thm:GlobalHarnackPrinciple}.}
\label{fig:upperbound}
\end{figure}

\medskip
\noindent\textbf{2)} \emph{Lower bound.}  Without loss of generality,
assume $ u_0(0)=\sup_{x\in \ren} u_0(x)$>0. By continuity of $u_0$,
there exists $R_0>0$ such that $u_0(x)\geq u_0(0)/2$ for all $x\in
B_{R_0}(0)$. Consider e.g. the  scaled standard mollifier
\[
\underline{u}_0(x)=\frac{u_0(0)}{2}\textup{e}^{1-\frac{R_0^2}{(R_0^2-|x|^2)_+}}
\]
Clearly, (i) $\underline{u}_0(x)=0\leq (1+|x|^2)^{-\frac{1+2s}{2}}$ if $|x|\geq R_0$; (ii) $\underline{u}_0$ is radially symmetric and radially nonincreasing; (iii) $\underline{u}_0\in C^\infty_\textup{b}(\ren)$; and (iv) $u_0\geq \underline{u}_0$ in $\ren$ since
\[\underline{u}_0(x) \leq \underline{u}_0(0) = \frac{u_0(0)}{2}\textup{e}^{1-\frac{R_0^2}{(R_0^2-0)_+}}=  \frac{u_0(0)}{2} \leq u_0(x) \quad \text{for all $x\in B_{R_0}(0)$}
\]
and $\underline{u}_0(x)=0\leq u_0$ for $x\in\ren \setminus B_{R_0}(0)$ (see Figure \ref{fig:lowerbound}). From here, the proof follows as in Step 1) by using the lower bound in Lemma \ref{lem:finetailHE}.

\begin{figure}[h!]
\center
\includegraphics[width=0.5\textwidth]{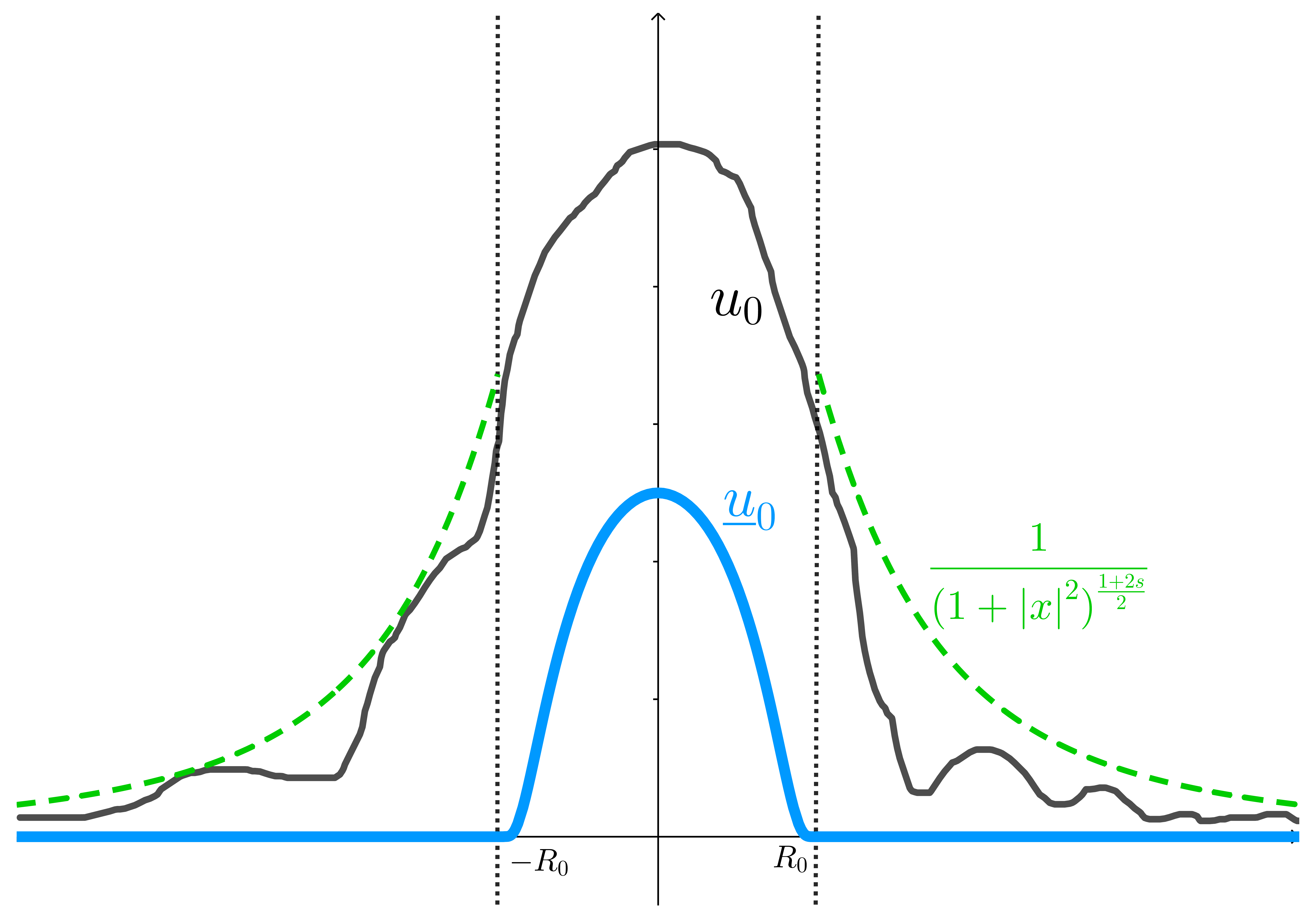}
\caption{Lower bound for $u_0$ in the proof of Theorem \ref{thm:GlobalHarnackPrinciple}.}
\label{fig:lowerbound}
\end{figure}
\end{proof}

\section{Extensions and open problems}\label{sec:extensions}

%\subsection{Extensions and open problems}

$\bullet$ There is an important open problem concerning the uniqueness and general comparison principle of viscosity solutions, either defined in our way or another suitable way that includes existence. For the moment we know that the following two classes of $BUC$ viscosity solutions are unique: (i) radial radially nonincreasing solutions and (ii) monotone solutions evolving in one dimension only. Uniqueness in these cases follows by comparison with classical solutions. The problem is also open for elliptic equations of the same type, cf. \cite{BjCaFi12}. 

$\bullet$  A main question that we deal with here is: how different is the theory and its results from the linear case (fractional heat equation)? The answer seems to be that they are quite different if $n\geq2$, since then the  infinity fractional Laplacian is a heavily nonlinear operator.

$\bullet$ It is not clear whether for $n\geq2$ the solutions evolve in time towards a radial profile (as in the local case, see below) or preserve a certain distortion. This is an interesting open problem to which we give a partial answer in our Section \ref{sec:GHP} with the global Harnack principle. 
 In Figure \ref{fig:6timeslevelsets} (obtained with a rigorous finite difference scheme taken from the companion paper \cite{DTEnJaVa22b}) the distortion present in the initial datum can still be observed for all the computed times. 
 
 \begin{figure}[h!]
\center
\includegraphics[width=0.5\textwidth]{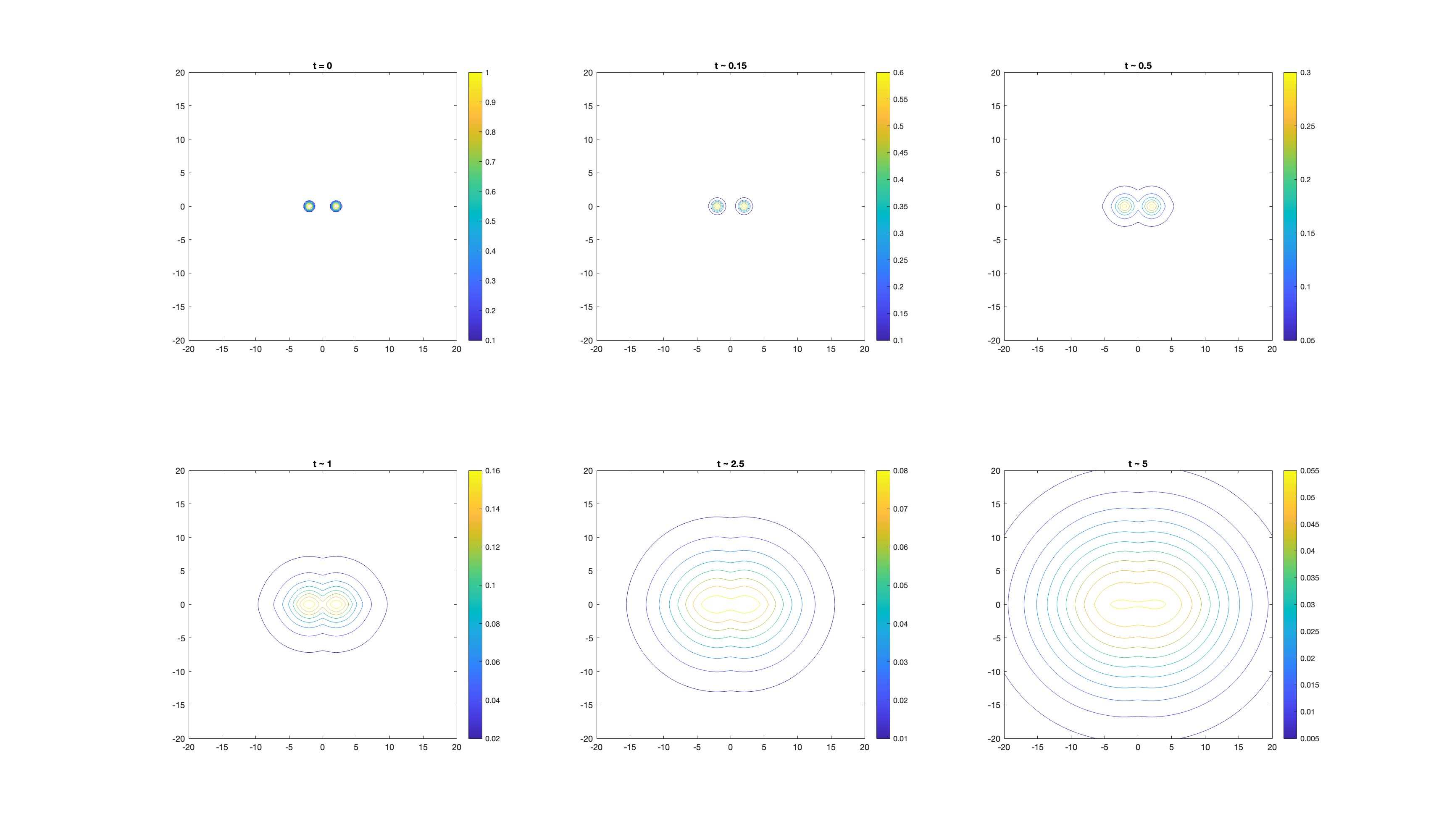}
\caption{Evolution of the level sets of the solution.}
\label{fig:6timeslevelsets}
\end{figure}

$\bullet$ In particular, the evolution equation for the local version posed in the whole space has been studied  by Portilheiro et al. in \cite{MR2915863, PortVaz2013}. Then, there is a fine asymptotic behaviour as $t\to \infty$ that implies a sharp convergence rate to radiality. The Aleksandrov Principle is a main ingredient in the proof. On the condition that the Aleksandrov Principle is true for some class of solutions of our Cauchy problem, we could also obtain a similar sharp asymptotic behaviour as $t\to \infty$ for such solutions. Such discussion is not included here.

$\bullet$ Large part of the concepts and results of this paper can be applied to the more general equation $\partial_tu =\ifl u+ f(x,t)$. In particular, this could be applied to the stationary equation $\ifl u = f(x)$, thus relating the present results to the results of \cite{BjCaFi12}.

$\bullet$ We end the discussion by including an example demonstrating that $\ifl$ could indeed be pointwise  discontinuous. Consider $\Phi\in C_{\textup{b}}^2(\R)$ satisfying $\Phi(x_1)=\Phi(-x_1)$ and strictly decreasing for $x_1\geq0$. As in Lemma \ref{lem:OtherExamplesSmoothSolutions}, we define $\phi(x):=\Phi(x_1)$ (see Figure \ref{fig:counter}) where, for the sake of simplicity, $x=(x_1,x_2)\in \R^2$. 

 \begin{figure}[h!]
\center
\includegraphics[width=0.5\textwidth]{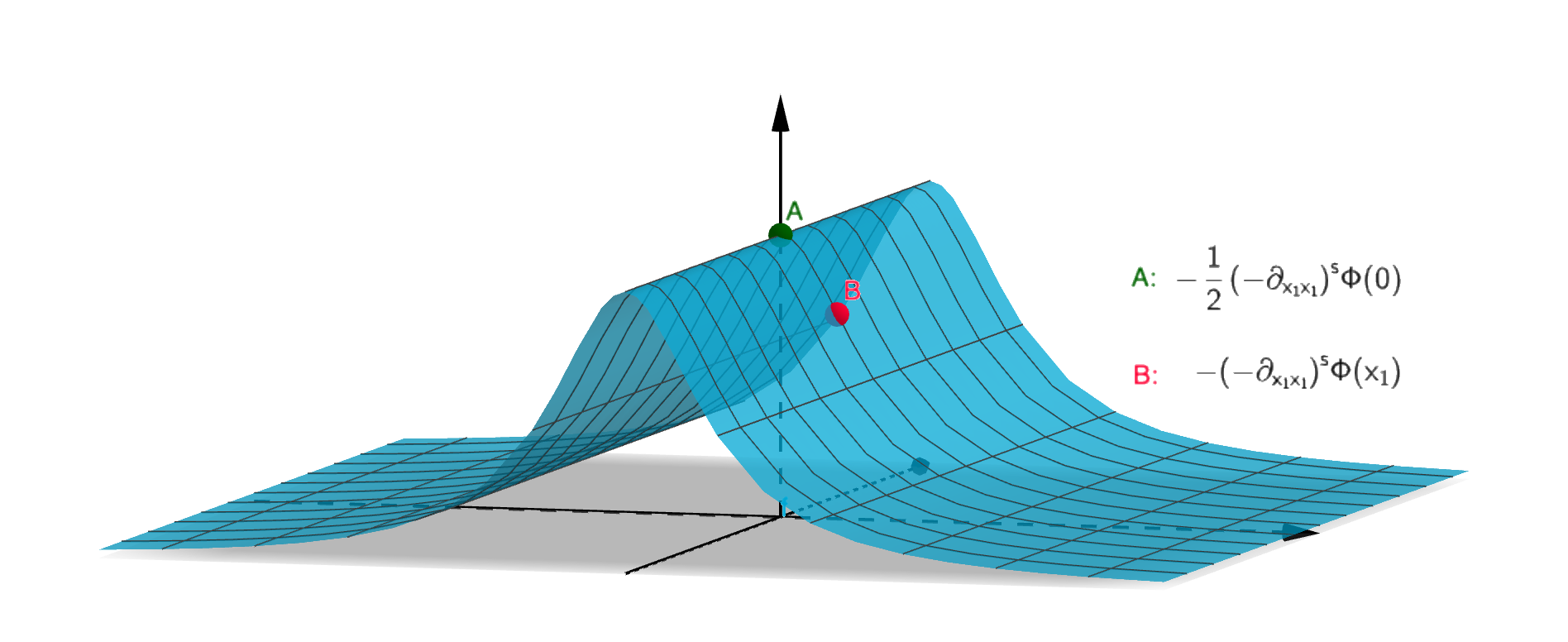}
\caption{Example of discontinuity of the operator $\ifl$.}
\label{fig:counter}
\end{figure}

On one hand, when $x_1\neq0$, we have $\zeta=\pm e_1$ (cf. Lemma \ref{lem:EquivalentDefIFL}) which yields $\ifl \phi(x_1,x_2)=-(-\partial_{x_1x_1}^2)^s\Phi(x_1)$. On the other hand, when $x_1=0$, $\nabla\phi(0,x_2)=0$ and by construction, 
$$
\inf_{|y|=1} \int_{0}^\infty\big(\phi(0,x_2)-\eta y)-\phi(0,x_2)\big) \frac{\dd \eta}{\eta^{1+2s}}= \int_{0}^\infty\big(\Phi(x_1-\eta)-\Phi(x_1)\big) \frac{\dd \eta}{\eta^{1+2s}}.
$$
Since $x_1=0$ is a maximum point and $\phi(0,x_2)=\phi((0,x_2)+\eta e_2)=\Phi(x_1)$, 
$$
0\geq \sup_{|y|=1} \int_{0}^\infty\big(\phi((0,x_2)+ \eta y)-\phi(0,x_2)\big) \frac{\dd \eta}{\eta^{1+2s}}\geq  \int_{0}^\infty\big(\phi((0,x_2)+ \eta e_2)-\phi(0,x_2)\big) \frac{\dd \eta}{\eta^{1+2s}}= 0.
$$
We then conclude by Lemma \ref{lem:EquivalentDefIFL} and symmetry of $\Phi$ that
\[
\begin{split}
\ifl \phi(0,x_2)
&=C_s\int_{0}^\infty\big(\Phi(- \eta )-\Phi(0)\big) \frac{\dd \eta}{\eta^{1+2s}}= \frac{1}{2}C_s\int_{0}^\infty\big(\Phi(\eta)+\Phi(- \eta )-2\Phi(0)\big) \frac{\dd \eta}{\eta^{1+2s}}\\
&=-\frac{1}{2}(-\partial_{x_1x_1}^2)^s\Phi(0).\\
\end{split}
\]

Hence,
$$
\ifl\phi(x_1,x_2)=
\begin{cases}
-(-\partial_{x_1x_1}^2)^s\Phi(x_1),&\qquad\text{if}\qquad x_1\neq0,\\
-\frac{1}{2}(-\partial_{x_1x_1}^2)^s\Phi(x_1),&\qquad\text{if}\qquad x_1=0.
\end{cases}
$$

%%%%%%%%%%%%%%%%%%%%%%%%%%%%%%%%%%%%%%%%%%%%%%%%%%%%%%%%%%%%%%%%%%%%%%
\section*{Acknowledgements}

\  F. del Teso was supported by the Spanish Government through PGC2018-094522-B-I00, RYC2020-029589-I, and PID2021-127105NB-I00 funded by the MICIN/AEI.

E. R. Jakobsen received funding from the Research Council of Norway under the Toppforsk (research excellence) grant agreement no. 250070 ``Waves and Nonlinear Phenomena (WaNP)''.

J. Endal received funding from the European Union's Horizon 2020 research and innovation programme under the Marie Sk{\l}odowska-Curie grant agreement no. 839749 ``Novel techniques for quantitative behavior of convection-diffusion equations (techFRONT)'', and from the Research Council of Norway under the MSCA-TOPP-UT grant agreement no. 312021.

The work of J. L. V\'azquez was funded by grant PGC2018-098440-B-I00 and PID2021-127105NB-I00 from the Spanish Government.  He is an Honorary Professor at Univ.\ Complutense de Madrid.

Part of this material is based upon work supported by the Swedish Research Council under grant no. 2016-06596 while the authors FdT and JLV were in residence at Institut Mittag-Leffler in Djursholm, Sweden, during the research program ``Geometric Aspects of Nonlinear Partial Differential Equations'', fall of 2022.

%%%%%%%%%%%%%%%%%%%%%%%%%%%%%%%%%%%%%%%%%%%%%%%%%%%%%%%%%%%%%%%%%%%%%%
\printbibliography

\end{document}